\numberwithin{equation}{section}
\numberwithin{figure}{section}
\theoremstyle{plain}
\newtheorem{thm}{Theorem}
  \theoremstyle{remark}
  \newtheorem{rem}[thm]{Remark}
\newenvironment{lyxcode}
{\par\begin{list}{}{
\setlength{\rightmargin}{\leftmargin}
\setlength{\listparindent}{0pt}
\raggedright
\setlength{\itemsep}{0pt}
\setlength{\parsep}{0pt}
\normalfont\ttfamily}%
 \item[]}
{\end{list}}
  \theoremstyle{remark}
  \newtheorem*{rem*}{Remark}
  \theoremstyle{plain}
  \newtheorem{lem}[thm]{Lemma}
  \theoremstyle{plain}
  \newtheorem{prop}[thm]{Proposition}
\begin{document}

\title[Con-eigenvalue algorithm for optimal rational approximations]{fast and accurate con-eigenvalue algorithm for optimal rational approximations }

\author{T. S. Haut and G. Beylkin}

\address{Department of Applied Mathematics, University of Colorado, Boulder,
CO 80309-0526, United States}

\thanks{This research was partially supported by NSF grant DMS-100995 and
DOE/ORNL grant 4000038129.}
\begin{abstract}
The need to compute small con-eigenvalues and the associated con-eigenvectors
of positive-definite Cauchy matrices naturally arises when constructing
rational approximations with a (near) optimally small $L^{\infty}$
error. Specifically, given a rational function with $n$ poles in
the unit disk, a rational approximation with $m\ll n$ poles in the
unit disk may be obtained from the $m$th con-eigenvector of an $n\times n$
Cauchy matrix, where the associated con-eigenvalue $\lambda_{m}>0$
gives the approximation error in the $L^{\infty}$ norm. Unfortunately,
standard algorithms do not accurately compute small con-eigenvalues
(and the associated con-eigenvectors) and, in particular, yield few
or no correct digits for con-eigenvalues smaller than the machine
roundoff. We develop a fast and accurate algorithm for computing con-eigenvalues
and con-eigenvectors of positive-definite Cauchy matrices, yielding
even the tiniest con-eigenvalues with high relative accuracy. The
algorithm computes the $m$th con-eigenvalue in $\mathcal{O}\left(m^{2}n\right)$
operations and, since the con-eigenvalues of positive-definite Cauchy
matrices decay exponentially fast, we obtain (near) optimal rational
approximations in $\mathcal{O}\left(n\left(\log\delta^{-1}\right)^{2}\right)$
operations, where $\delta$ is the approximation error in the $L^{\infty}$
norm. We derive error bounds demonstrating high relative accuracy
of the computed con-eigenvalues and the high accuracy of the unit
con-eigenvectors. We also provide examples of using the algorithm
to compute (near) optimal rational approximations of functions with
singularities and sharp transitions, where approximation errors close
to machine precision are obtained. Finally, we present numerical tests
on random (complex-valued) Cauchy matrices to show that the algorithm
computes all the con-eigenvalues and con-eigenvectors with nearly
full precision. 
\end{abstract}
\maketitle

\section{Introduction}

We present an algorithm for computing with high relative accuracy
the con-eigenvalue decomposition of positive-definite Cauchy matrices,\begin{equation}
Cu_{m}=\lambda_{m}\overline{u_{m}},\,\,\,\, C_{ij}=\frac{\sqrt{\alpha_{i}}\sqrt{\overline{\alpha_{j}}}}{1-\gamma_{i}\overline{\gamma_{j}}},\,\, i,j=1,\ldots,n,\label{eq:con-eigenvalue problem-1}\end{equation}
where $\gamma_{i}$ and $\alpha_{i}$ are complex numbers and $\left|\gamma_{i}\right|<1$.The
con-eigenvalue $\lambda_{m}$ is only defined up to an arbitrary phase,
which we choose so that $\lambda_{m}>0$. Although the con-eigenvalue
decomposition (see e.g. \cite{HOR-JOH:1990}) is less well-known than
the eigenvalue decomposition or the singular value decomposition,
it arises naturally in constructing optimal approximations using exponentials
or rational functions \cite{AD-AR-KR:1968,AD-AR-KR:1968a,AD-AR-KR:1971,DE-GE-KA:1981,YOUNG:1983,BEY-MON:2005,BEY-MON:2009}.
For example, for a real-valued rational function $f(z)$,\begin{equation}
f(z)=\sum_{i=1}^{n}\frac{\alpha_{i}}{z-\gamma_{i}}+\sum_{i=1}^{n}\frac{\overline{\alpha_{i}}z}{1-\overline{\gamma_{i}}z}+\alpha_{0},\label{eq:function form for reduction problem}\end{equation}
we may construct a rational approximation $g(z)$ with $m$ poles
and with an error, \[
\max_{x\in[0,1]}\left|f\left(e^{2\pi ix}\right)-g\left(e^{2\pi ix}\right)\right|\approx\lambda_{m},\]
by solving the con-eigenvalue problem (\ref{eq:con-eigenvalue problem-1})
(see Section~\ref{sub:Reduction-procedure} for more detail). Ordering
the con-eigenvalues, $\lambda_{1}\geq\ldots\geq\lambda_{n}>0$, the
number of poles $m$ of the approximant $g(z)$ corresponds to the
index of the con-eigenvalue $\lambda_{m}$ and leads to a near optimal
approximation in the $L^{\infty}$-norm with the error close to $\lambda_{m}$.
The form (\ref{eq:function form for reduction problem}) ensures that
$f\left(e^{2\pi ix}\right)$ is real-valued and periodic; complex-valued
functions may also be handled using this form by splitting the real
and imaginary parts and performing additional reductions (see \cite{BEY-MON:2009}). 

Current algorithms compute an approximate con-eigenvalue $\widehat{\lambda_{m}}$
with an error no better than $\left|\lambda_{m}-\widehat{\lambda_{m}}\right|/\left|\lambda_{1}\right|=\mathcal{O}\left(\epsilon\right)$,
and an approximate unit con-eigenvector $\widehat{u_{m}}$ with an
error no better than \[
\left\Vert u_{m}-\widehat{u_{m}}\right\Vert _{2}=\mathcal{O}\left(\epsilon\right)/\text{absgap}_{m},\,\,\,\,\,\text{absgap}_{m}\equiv\min_{p\neq m}\left|\lambda_{m}-\lambda_{p}\right|/\left|\lambda_{1}\right|,\]
where $\epsilon$ denotes the machine roundoff. This implies that
a computed con-eigenvalue smaller than $\left|\lambda_{1}\right|\epsilon$
may have few or no correct digits. Hence, in order to obtain a rational
approximation with accuracy $\lambda_{m}\lesssim10^{-7}$, we may
be forced to use at least quadruple precision. Since quadruple precision
is typically not supported by the hardware, it slows down the computation
by an unpleasant factor (between $30$ and $100$). Another undesirable
feature of current algorithms to solve (\ref{eq:con-eigenvalue problem-1})
is the $\mathcal{O}\left(n^{3}\right)$ complexity for finding the
$m\ll n$ poles of $g(z)$, where $n$ is the original number of poles
of $f(z)$. 

Although the construction of optimal rational approximations in the
$L^{\infty}$-norm has a long history (starting with the seminal papers
\cite{AD-AR-KR:1968,AD-AR-KR:1968a,AD-AR-KR:1971}), the difficulties
mentioned above limit practical applications of such approximations
to situations where the problem size is relatively small and a low
accuracy is acceptable. In this regard, we view our results as a stepping
stone toward a wider use of optimal $L^{\infty}$-approximations in
numerical analysis (see \cite{HA-BE-MO:2012}).

We develop a fast and accurate algorithm for con-eigenvalue/con-eigenvector
computations of positive-definite Cauchy matrices that addresses both
of the difficulties mentioned above. Our algorithm computes the $m$th
con-eigenvalue/con-eigenvector in $\mathcal{O}\left(m^{2}n\right)$
operations (see Section~\ref{sec:Accuracy-and-perturbation}). Since
the con-eigenvalues of positive definite Cauchy matrices decay exponentially
fast, for a given desired accuracy $\|f\left(e^{2\pi ix}\right)-g\left(e^{2\pi ix}\right)\|_{\infty}\approx\delta$,
the number of poles $m$ in the approximant $g(z)$ is $\mathcal{O}\left(\log\delta^{-1}\right)$.
Therefore, the complexity of our algorithm is $\mathcal{O}\left(n\left(\log\delta^{-1}\right)^{2}\right)$,
i.e., it is essentially linear in the number of original poles $n$
and, thus, is mostly controlled by the number of poles of the final
optimal approximation. 

The con-eigenvalue algorithm achieves high relative accuracy, i.e.,
the computed con-eigenvalue $\widehat{\lambda_{m}}$ satisfies $\left|\lambda_{m}-\widehat{\lambda_{m}}\right|/\left|\lambda_{m}\right|=\mathcal{O}\left(\epsilon\right)$,
and the computed unit con-eigenvector $\widehat{u_{m}}$ satisfies
\[
\left\Vert u_{m}-\widehat{u_{m}}\right\Vert _{2}=\mathcal{O}\left(\epsilon\right)/\mbox{relgap}_{m},\,\,\,\,\mbox{relgap}_{m}\equiv\min_{l\neq m}\left|\lambda_{m}-\lambda_{l}\right|/\left(\lambda_{l}+\lambda_{m}\right),\]
(see Theorems~\ref{thm:high accuracy of algorithm-1}~and~\ref{thm:truncation theorem}
for the exact statement). In contrast to the usual perturbation theory
for general matrices, we show that small perturbations of the poles
$\gamma_{m}$ and residues $\alpha_{m}$ (determining the Cauchy matrix
$C=C(\alpha,\gamma)$ in (\ref{eq:con-eigenvalue problem-1})) lead
to correspondingly small perturbations in the con-eigenvalues and
con-eigenvectors, as long as the poles are well separated in a relative
sense and are not too close to the unit circle.

In many applications, the function $f\left(e^{2\pi ix}\right)$ has
sharp transitions, so that the poles are clustered close to the unit
circle and each other. In such cases, it is natural to maintain the
poles of $f\left(z\right)$ in the form $\gamma_{j}=\exp\left(-\tau_{j}\right)$,
where $\mathcal{R}e\left(\tau_{j}\right)>0$ and $0\le\mathcal{I}m\left(\tau_{j}\right)<2\pi$,
so that $\mathcal{R}e\left(\tau_{j}\right)$ are well-separated in
a relative sense. The reduction algorithm produces new poles of the
same form, where even the smallest exponents are computed with high
relative accuracy. This allows us to develop a numerical calculus
that includes functions with singularities and sharp transitions.
We address this issue further in Section~\ref{sec:Examples-of-optimal}. 

Our approach is inspired by papers \cite{DEM-VES:1992,DRMAC:1998,D-G-E-S-V-D:1999,DEMMEL:1999,HIGHAM:2000},
which develop algorithms and theory for highly accurate SVDs of certain
structured matrices. Generally speaking, high relative accuracy is
achieved when it is possible to avoid catastrophic cancellation resulting
from subtracting two close floating point numbers (when the outcome
of such cancellation is significant relative to the final result).
We refer to \cite{D-D-H-K:2008} for a comprehensive analysis of when
efficient and accurate algorithms are possible using floating point
arithmetic. Classes of matrices for which highly accurate SVD or eigenvalue
algorithms exist include bi-diagonal matrices \cite{DEM-KAH:1990,D-D-L-T:1991,FER-PAR:1994},
acyclic matrices \cite{DEM-CRA:1992}, graded positive-definite matrices
\cite{DEM-VES:1992}, scaled diagonally dominant matrices \cite{BAR-DEM:1988},
totally positive matrices \cite{KOEV:2005}, certain indefinite matrices
\cite{SLAPNI:2003}, and Cauchy matrices (as well as, more generally,
matrices with displacement rank one) \cite{DEMMEL:1999}. For such
matrices, recent algorithmic advances (see \cite{DRM-VES:2007,DRM-VES:2007a})
make the cost of achieving high relative accuracy comparable to that
of alternative (and less accurate) SVD methods. 

The con-eigenvalue algorithm considered here is based on computing
the eigenvalue decomposition of the product, $\overline{C}C$, of
positive-definite Cauchy matrices $\overline{C}$ and $C$, and is
similar to the algorithm in \cite{D-G-E-S-V-D:1997} for the generalized
eigenvalue decomposition, as well as the algorithm in \cite{DRMAC:1998}
for the product SVD decomposition. We also rely on the algorithm in
\cite{DEMMEL:1999} for computing, with high relative accuracy, the
Cholesky decomposition (with complete pivoting) $C=\left(PL\right)D^{2}\left(PL\right)^{*}$
of a positive-definite Cauchy matrix $C$. However, since we are interested
in computing only con-eigenvalues of some approximate size $\delta$,
we stop Demmel's Cholesky algorithm once the diagonal elements $D_{ii}$
are small with respect to $\delta$ and the desired precision. Since
the diagonal elements $D_{ii}$ decay exponentially fast, this allows
us to accurately compute con-eigenvalues of size $\delta$ (and the
associated con-eigenvectors) in $\mathcal{O}\left(n\left(\log\delta^{-1}\right)^{2}\right)$
operations. We also modify the Cholesky decomposition algorithm in
\cite{DEMMEL:1999} to yield high relative accuracy for Cauchy matrices
$C_{ij}=\sqrt{\alpha_{i}}\sqrt{\overline{\alpha_{j}}}/\left(1-\gamma_{i}\overline{\gamma_{j}}\right)$,
with $\gamma_{i}=\exp\left(-\tau_{j}\right)$, where the real parts
of the exponents, $\mathcal{R}e(\tau_{j})$, may be extremely small
in magnitude. We observe that the error bounds developed in \cite{DRMAC:1998}
are not applicable to our problem since the condition number of a
Cauchy matrix cannot be appreciably reduced by scaling the rows and
columns. In contrast, the error bounds developed in this paper yield
high relative accuracy for all the computed con-eigenvalues larger
than $\delta$ (and high accuracy for the con-eigenvectors), as long
as the $n$ leading principal minors of $L^{\text{T}}L$ are well-conditioned,
and the relative gap between the con-eigenvalues is not too small
(we have always observed this to hold in practice). In particular,
if $\delta$ is chosen small enough, the full con-eigenvalue decomposition
is obtained with high relative accuracy. The derivation of our error
bounds makes crucial use of the component-wise perturbation theory
developed in \cite{DEM-VES:1992} for the singular vectors of graded
matrices (see also \cite{MATHIA:1997}), as well as the component-wise
error analysis in \cite{DEM-VES:1992} and \cite{MATHIA:1996} for
the one-sided Jacobi method. We also use the error analysis given
in \cite{HIGHAM:2000} for the Householder QR method. We note that
although our error estimates are much more pessimistic than what we
observe in practice, they provide a framework for understanding the
high accuracy of the con-eigenvalue algorithm of this paper.

It has been an established practice, in both numerical analysis and
signal processing, to use $L^{2}$-type methods for representing functions.
On the other hand, it has been understood for some time that nonlinear
approximations may be far superior in achieving high accuracy with
a minimal number of terms (see e.g., \cite{NEWMAN:1964}). However,
in spite of many interesting results (see e.g., \cite{KUNG:1980,TREFET:1981,DE-GE-KA:1981,TRE-GUT:1983,TREFET:1983,YOUNG:1983,BEY-MON:2005,BEY-MON:2010,DEU-TRE:2011}),
the widespread use of nonlinear approximations has been limited by
a lack of efficient and accurate algorithms for computing them (particularly
for functions with sharp changes or singularities). Our algorithms
provide the necessary tools for computing optimal nonlinear approximations
via rational functions, and come with guaranteed accuracy bounds.
We believe that these new accurate algorithms may greatly extend the
practical use of $L{}^{\infty}$ approximations in numerical analysis
(see \cite{HA-BE-MO:2012}) and signal processing (see \cite{BE-LE-MO:2011p}). 

In Section~\ref{sub:Reduction-procedure} we describe the reduction
problem for rational functions, and connect its solution to a con-eigenvalue
problem for positive definite Cauchy matrices. We then present new
algorithms for solving the con-eigenvalue problem with high relative
accuracy. We follow up in Section~\ref{sec:Examples-of-optimal}
with examples of using the reduction algorithm to construct and use
optimal rational approximations for functions with singularities and
sharp transitions. In Section~\ref{sub:Experimental-verification}
we verify the accuracy of the con-eigenvalue algorithm by comparing
the con-eigenvalue decomposition of randomly generated Cauchy matrices
with that obtained via standard algorithms in extended precision.
In Section~\ref{sec:Accuracy-and-perturbation}, we prove that the
con-eigenvalue algorithm achieves high relative accuracy and that
the con-eigenvalue decomposition is stable with respect to small perturbations
of the parameters defining the Cauchy matrix. Finally, Section~\ref{sec:Comparison-with-related}
compares the reduction algorithm of this paper with other algorithms
in the literature for constructing optimal rational approximations.
For the convenience of the reader we also provide relevant background
material in Section~\ref{sec:Background-on-algorithms}. The proof
of a technical proposition may be found in Appendix.

\section{Accurate con-eigenvalue decomposition (an informal derivation)}

\subsection{\label{sub:Reduction-procedure}Constructing optimal rational approximations
via a con-eigenvalue problem}

In order to motivate our con-eigenvalue algorithm, let us explain
how the accurate computation of small con-eigenvalues and associated
con-eigenvectors allows us to construct optimal rational approximations. 

We consider an algorithm to find a rational approximation $r(e^{2\pi ix})$
to $f(e^{2\pi ix})$ in (\ref{eq:function form for reduction problem})
with a specified number of poles and with a (nearly) optimally small
error in the $L^{\infty}$-norm. The algorithm is based on a theorem
of Adamyan, Arov, and Krein (referred to below as the AAK Theorem)
\cite{AD-AR-KR:1971}. We note that the formulation given below in
terms of a con-eigenvalue problem is similar to the approach taken
in \cite{DE-GE-KA:1981} and \cite{BEY-MON:2005}. 

Given a target accuracy $\delta$ for the error in the $L^{\infty}$-norm,
the steps for computing the rational approximant $r(z)$, \[
r(z)=\sum_{i=1}^{m}\frac{\beta_{i}}{z-\eta_{i}}+\sum_{i=1}^{m}\frac{\overline{\beta_{i}}z}{1-\overline{\eta_{i}}z}+\alpha_{0},\]
 are as follows:
\begin{enumerate}
\item Compute a con-eigenvalue $0<\lambda_{m}\leq\delta$ and corresponding
con-eigenvector $u$ of the Cauchy matrix $C_{ij}=C_{ij}(\gamma_{i},\alpha_{j})$,
\begin{equation}
Cu=\lambda_{m}\overline{u},\,\,\,\mbox{where}\,\,\, u=\left(\begin{array}{c}
u_{1}\\
u_{2}\\
\vdots\\
u_{n}\end{array}\right),\,\,\,\, C_{ij}=\frac{a_{i}b_{j}}{x_{i}+y_{j}},\,\,\, i,j=1,\dots,n,\label{eq:con-eigenvalue problem}\end{equation}
and $a_{i}=\sqrt{\alpha_{i}}/\gamma_{i}$, $b_{j}=\sqrt{\overline{\alpha}_{j}}$,
$x_{i}=\gamma_{i}^{-1}$, $y_{j}=-\overline{\gamma_{j}}.$ The con-eigenvalues
of $C$ are labeled in non-increasing order, $\lambda_{1}\geq\lambda_{2}\geq\dots\geq\lambda_{n}$. 
\item Find the (exactly) $m$ zeros $\eta_{j}$ in the unit disk of the
function \begin{equation}
v(z)=\frac{1}{\lambda_{m}}\sum_{i=1}^{n}\frac{\sqrt{\overline{\alpha_{i}}}\,\overline{u_{i}}}{1-\overline{\gamma_{i}}z}.\label{eq: eq for function with zeros}\end{equation}
The fact that there are exactly $m$ zeros in the unit disk, corresponding
to the index $m$ of the con-eigenvalue $\lambda_{m}$, is a consequence
of the AAK theorem. The poles of $r(z)$ are given by the zeros $\eta_{j}$
of $v(z)$.
\item Find the residues $\beta_{m}$ of $r(z)$ by solving the $m\times m$
linear system\begin{equation}
\sum_{i=1}^{m}\frac{1}{1-\eta_{i}\overline{\eta_{j}}}\,\beta_{i}=\sum_{i=1}^{n}\frac{\alpha_{i}}{1-\gamma_{i}\overline{\eta_{j}}}.\label{eq:least squares for weights}\end{equation}

\end{enumerate}
The $L^{\infty}$-error of the resulting rational approximation $r(e^{2\pi ix})$
satisfies $\left\Vert f-r\right\Vert _{\infty}\approx\lambda_{m}$,
and is close to the best error in the $L^{\infty}$-norm achievable
by rational functions with no more than $m$ poles in the unit disk.
Hence, we are led to the problem of computing, to high relative accuracy,
small con-eigenvalues and the associated con-eigenvectors of positive-definite
Cauchy matrices.

In many applications it is natural (and advisable) to maintain the
poles $\gamma_{j}$ in the form $\gamma_{j}=\exp\left(-\tau_{j}\right)$
(see e.g., \cite{BEY-MON:2005,BEY-MON:2010}). As we explain in Section~\ref{sec:Examples-of-optimal},
this is particularly important if the function $f(e^{2\pi ix})$ has
singularities or sharp transitions. The advantage of this form is
that, on a logarithmic scale, the nodes are well separated. In such
cases, our algorithm computes the new poles $\eta_{i}=\exp\left(-\zeta_{i}\right)$
with nearly full precision in the exponents $\zeta_{i}$, i.e., $\left|\hat{\zeta_{i}}-\zeta_{i}\right|/\left|\zeta_{i}\right|$
is close to machine precision even if $\zeta_{i}$ is close to zero.
\begin{rem}
In practice, finding the new poles $\eta_{i}$ using the formula for
$v(z)$ in (\ref{eq: eq for function with zeros}) is ill-advised,
since evaluating $v(z)$ in this form could result in loss of significant
digits through catastrophic cancellation. Indeed, it turns out (see
\cite[Section 6]{BEY-MON:2005} and \cite{HA-BE-MO:2012}) that the
values of the con-eigenvector components satisfy $u_{i}=\sqrt{\alpha_{i}}v\left(\gamma_{i}\right)$,
$i=1,\ldots,n$. It then follows that the sum (\ref{eq: eq for function with zeros})
must suffer cancellation of about $\log_{10}\left(\lambda_{m}^{-1}\right)$
digits if $v\left(\gamma_{i}\right)$ and $v\left(z\right)$ are of
comparable size (note that $\lambda_{m}$ controls the approximation
error and, thus, is necessarily small). On the other hand, the function
values $v\left(\gamma_{i}\right)=u_{i}/\sqrt{\alpha_{i}}$, $i=1,\ldots,n$,
along with the $n$ poles $1/\overline{\gamma_{i}}$ of $v(z)$, completely
determine (\ref{eq: eq for function with zeros}). Since the poles
$\gamma_{i}$ of $f(z)$ are often close to the poles $\eta_{i}$
of $r(z)$, we have observed that evaluating $v(z)$ by using rational
interpolation via continued fractions with the known values $v\left(\gamma_{i}\right)$
allows us to obtain the new poles $\eta_{i}$ with nearly full precision.
In particular, an approximation $\widetilde{v}(z)$ to $v(z)$ is
computed via continued fractions, \begin{equation}
\widetilde{v}(z)=\frac{a_{1}}{1+a_{2}\left(z-\gamma_{1}\right)/\left(1+a_{3}\left(z-\gamma_{2}\right)/\left(1+\cdots\right)\right)},\label{eq:continued fraction-1}\end{equation}
where the coefficients $a_{j}$ are determined from the interpolation
conditions $\widetilde{v}(\gamma_{i})=v\left(\gamma_{i}\right)$.
If the poles $\gamma_{i}$ are given in the form $\gamma_{i}=\exp\left(-\tau_{i}\right)$,
we find that Newton's method on $\widetilde{v}\left(\exp\left(-\eta\right)\right)$
yields the new poles $\eta_{i}=\exp\left(-\zeta_{i}\right)$ with
nearly full relative accuracy even when $\text{Re}\left(\zeta_{i}\right)\ll1$;
see Section~\ref{sec:Examples-of-optimal} for more details (achieving
high relative accuracy also requires slightly modifying the recursion
formulas for the continued fraction coefficients $a_{i}$). A more
detailed description of the root-finding algorithm may be found in
\cite{HA-BE-MO:2012}.
\end{rem}

\subsection{Accurate con-eigenvalue decompositions of positive-definite matrices
with RRDs\label{sub:Derivation-of-algorithm}}

The con-eigenvalue problem for a positive-definite Cauchy matrix $C_{ij}=a_{i}b_{j}/\left(x_{i}+y_{j}\right)$
reduces to an eigenvalue problem, \begin{equation}
\overline{C}Cu=\lambda\overline{C}\bar{u}=\left|\lambda\right|^{2}u.\label{eq:con-eigenvalue problem 3}\end{equation}
We first discuss a somewhat more general problem of computing accurate
eigenvalues and eigenvectors of matrices of the form $\overline{A}A$,
where we assume that $A$ has a factorization $A=XD^{2}X^{*}$, with
$X$ a (well-conditioned) $n\times m$ matrix ($m\leq n$) and $D$
an $m\times m$ diagonal matrix with positive, non-increasing diagonal
entries. The rectangular form of the factorization, $m\le n$, will
be important in the sequel. 

Let us define the $m\times m$ matrix $G=D\left(X^{\text{T}}X\right)D$,
and consider its SVD, $G=W\Sigma V^{*}$. Then $G^{*}G=V\Sigma^{2}V^{*}$,
and the $i$th right singular vector ($1\leq i\leq m$), $v_{i}=V(:,i)$,
satisfies $\left(DX^{*}\overline{X}D\right)\left(DX^{\text{T}}XD\right)v_{i}=\Sigma_{ii}^{2}v_{i}$.
It then follows that $z_{i}=XDv_{i}$ is an eigenvector of $A\overline{A}$
with eigenvalue $\Sigma_{ii}^{2}$, since\begin{eqnarray*}
A\overline{A}z_{i} & = & \left(XD^{2}X^{*}\right)\left(\overline{X}D^{2}X^{\text{T}}\right)z_{i}=\\
 & = & XD\left(DX^{*}\overline{X}D\right)\left(DX^{\text{T}}XD\right)v_{i}=\Sigma_{ii}^{2}XDv_{i}=\Sigma_{ii}^{2}z_{i}.\end{eqnarray*}
and, thus, $\overline{z_{i}}=\overline{X}D\overline{v_{i}}$ is an
eigenvector of $\overline{A}A$. To summarize: given the decomposition
$A=XD^{2}X^{*}$, an eigenvector $z_{i}$ ($i\leq m$) of $\overline{A}A$
is given by $\overline{z_{i}}=\overline{X}\left(D\overline{v_{i}}\Sigma_{ii}^{-1/2}\right)$,
where $v_{i}$ is the $i$th right singular vector of the $m\times m$
matrix $G=D\left(X^{\text{T}}X\right)D$. Here $\Sigma_{ii}$ is the
$i$th singular value of $G$, and the $i$th con-eigenvalue of $A$.
Let us now present an algorithm for accurately computing the con-eigenvalues
and con-eigenvectors of $A$ (its derivation also relies on the background
material collected in Section~\ref{sec:Background-on-algorithms}).

\begin{algorithm}[H]
\caption{\label{Flo:con-eig comp of XDX^*}$\text{ConEig\_RRD}\left(X,D\right)$
computes accurate con-eigenvalue decomposition of $XD^{2}X^{*}$.
Input: rank-revealing factors $X$ and $D$ (of dimensions $n\times m$
and $m\times m$), where the diagonal of $D>0$ is decreasing. Output:
$m$ con-eigenvalues/con-eigenvectors of $XDX^{*}$, contained in
$\Sigma$ and $T$.}

$\left(\Sigma,T\right)\leftarrow\text{ConEig\_RRD}\left(X,D\right)$
\begin{lyxcode}
1.~Form~$G=D\left(X^{\text{T}}X\right)D$

2.~Compute~QR~factors~$\left(Q,R\right)\leftarrow\text{Householder\_QR}$~of~$G$~($G=QR$),~with~optional~pivoting~(see~Section~\ref{sub:QR decomposition})

3.~Compute~the~SVD~factors~$\left(U_{l},\Sigma,U_{r}\right)\leftarrow\text{Jacobi}\left(R\right)$~of~$R$~($R=U_{l}\Sigma U_{r}^{*}$),~using~one-sided~Jacobi,~applied~from~the~left~(see~Section~\ref{sub:Modified-one-sided-Jacobi})

4.~Compute~$R_{1}=D^{-1}RD^{-1}$,~$X_{1}=D^{-1}U_{l}\Sigma^{1/2}$,~and~$Y_{1}=R_{1}^{-1}X_{1}$~(see~(\ref{eq:relation between right/left sing vecs})~below)

5.~Form~the~matrix~of~con-eigenvectors~$T=\overline{XY_{1}}$,~and~output~con-eigenvalues~$\Sigma$~and~con-eigenvectors~$T$
\end{lyxcode}

\end{algorithm}

Importantly, for Cauchy matrices ($A=C$) the elements of $D$ decay
exponentially fast, and it would appear that computing the con-eigenvectors
$\overline{z_{i}}=\overline{X}D\overline{v_{i}}/\Sigma_{ii}^{1/2}$
might lead to wildly inaccurate results even if the right singular
vector of $G$, $v_{i}$, is computed accurately. However, as we show
in Section~\ref{sec:Accuracy-and-perturbation}, Algorithm~\ref{Flo:con-eig comp of XDX^*}
achieves high accuracy despite the extreme ill-conditioning of $D$.
The key reason is that the right singular vector $v_{i}$, corresponding
to the singular value $\Sigma_{ii}$, scales like $\left|v_{i}\left(j\right)\right|\leq c_{V}\min\left(D_{jj}/\Sigma_{ii}^{1/2},\Sigma_{ii}^{1/2}/D_{jj}\right)$,
and the computed singular vector $\widehat{v_{i}}$ is accurate relative
to the scaling in $D$ and $\Sigma$ in the sense that \[
\left|v_{i}\left(j\right)-\widehat{v_{i}}\left(j\right)\right|\leq\min\left\{ \frac{D_{jj}}{\sqrt{\Sigma_{ii}}},\frac{\sqrt{\Sigma_{ii}}}{D_{jj}}\right\} \mathcal{O}\left(\epsilon\right).\]
For Cauchy matrices, the quantity $\min\left(D_{jj}/\Sigma_{ii}^{1/2},\Sigma_{ii}^{1/2}/D_{jj}\right)$
decreases exponentially fast away from the diagonal $i=j$.

Let us give an informal explanation of the reasons why Algorithm~\ref{Flo:con-eig comp of XDX^*}
yields accurate results. As discussed in Section~\ref{sub:QR decomposition},
the QR Householder algorithm computes an accurate rank-revealing decomposition
of $G=QR$. It turns out (see Lemma~\ref{lem:R_0, for main alg proof})
that $R$ may be factored as $R=D^{2}R_{0}$, where $R_{0}$ is graded
relative to $D$ in the sense that$\left\Vert DR_{0}D^{-1}\right\Vert $
and $\left\Vert DR_{0}^{-1}D^{-1}\right\Vert $ are not too large,
as long as the $n$ leading principal minors of $X^{\text{T}}X$ are
well-conditioned. Therefore, from the discussion in Section~\ref{sub:Modified-one-sided-Jacobi}
(see in particular Theorem~\ref{thm:Jacobi bound one-sided scaling}),
the one-sided Jacobi algorithm computes the $i$th left singular vector
$u_{i}$ of $R$ accurately relative to the scaling $\min\left\{ D_{jj}/\Sigma_{ii}^{1/2},\Sigma_{ii}^{1/2}/D_{jj}\right\} $.
It follows that $D^{-1}u_{i}\Sigma_{ii}^{1/2}$ may also be computed
accurately. Finally, since the $i$th right singular vector $v_{i}$
of $R$ (and $G$) satisfies \begin{eqnarray}
Dv_{i}\Sigma_{ii}^{-1/2} & = & DR^{-1}u_{i}\Sigma_{ii}^{1/2}\nonumber \\
 & = & \left(DR_{0}D^{-1}\right)^{-1}\left(D^{-1}u_{i}\Sigma_{ii}^{1/2}\right),\label{eq:relation between right/left sing vecs}\end{eqnarray}
the con-eigenvector $\overline{z_{i}}=\overline{X}\left(D\overline{v_{i}}\Sigma_{ii}^{-1/2}\right)$
may be computed accurately, as long as $DR_{0}D^{-1}$ is computed
accurately and is well-conditioned (we show this is the case if $n$
leading principal minors of $X^{\text{T}}X$ are well-conditioned).
The last step in Algorithm~\ref{Flo:con-eig comp of XDX^*} uses
the approach in \cite{DRM-VES:2007a} for computing highly accurate
right singular vectors via solving a triangular linear system of equations.
\begin{rem}
To obtain optimal rational approximations (see Section~\ref{sub:Reduction-procedure}),
we need to compute small con-eigenvalues (and the associated con-eigenvectors)
of Cauchy matrices of the slightly different form, $C_{ij}=\sqrt{\alpha_{i}}\sqrt{\overline{\alpha}_{j}}/\left(1-\gamma_{i}\overline{\gamma_{j}}\right)$,
i.e., with $a_{i}=\sqrt{\alpha_{i}}/\gamma_{i}$, $b_{j}=\sqrt{\overline{\alpha}_{j}}$,
$x_{i}=\gamma_{i}^{-1}$, and $y_{j}=-\overline{\gamma_{j}}$. The
same reasoning as in \cite{DEMMEL:1999} shows that the Cholesky computation
of $C$ (see Section~\ref{sub:Accurate-factorization-of}) is performed
with high relative accuracy, as long as the differences $\gamma_{j}^{-1}-\overline{\gamma_{i}}$
are computed with high relative accuracy. As explained in the next
section, $\gamma_{j}^{-1}-\overline{\gamma_{i}}$ may be accurately
computed if $\gamma_{i}$ is of the form $\gamma_{i}=\exp\left(-\tau_{i}\right)$,
where the exponents $\tau_{i}$ are known accurately (see Section~\ref{sec:Examples-of-optimal}
for examples).%
{}
\end{rem}
~~
\begin{rem}
Computing the normalized eigenvector $u$ via (\ref{eq:con-eigenvalue problem 3})
determines the con-eigenvector, the solution of (\ref{eq:con-eigenvalue problem}),
only up to an unknown phase factor $e^{-i\phi/2}$. Indeed, given
any solution $\lambda$ and $u$ of (\ref{eq:con-eigenvalue problem 3})
and an arbitrary phase factor $e^{-i\phi}$, it is easy to see that
$\lambda e^{-i\phi}$ and $ue^{-i\phi/2}$ also satisfy (\ref{eq:con-eigenvalue problem}).
Let us now determine the phase $\phi$ so that the con-eigenvalues
$\lambda$ are positive. To do so, we compute the usual inner product
$\left(C\left(ue^{-i\phi/2}\right),ue^{-i\phi/2}\right)=\lambda\left(\overline{u}e^{i\phi/2},ue^{-i\phi/2}\right)$
and choose $\phi$ so that $\lambda>0$. Since $C$ is a positive-definite
matrix, it follows that $\left(\overline{u}e^{i\phi/2},ue^{-i\phi/2}\right)>0$.
From this we obtain the phase factor as $e^{i\phi}=\left(u,\overline{u}\right)/\left|\left(u,\overline{u}\right)\right|$.
\end{rem}

\subsection{Accurate con-eigenvalue decompositions of positive-definite Cauchy
matrices\label{sub:Accurate-con-eigenvalue-decompositions}}

If $A=C$ is a positive-definite Cauchy matrix, then the modified
GECP algorithm in \cite{DEMMEL:1999} computes the Cholesky decomposition
$C=\left(PL\right)D^{2}\left(PL\right)^{*}$ with high relative accuracy
(see Section~\ref{sub:Highly-accurate-SVDs}). Therefore, Algorithm~\ref{Flo:con-eig comp of XDX^*}
for the eigenvalue problem of $\overline{C}C$ may be used, with $X=PL$,
to compute all the eigenvalues and eigenvectors (and, therefore, the
con-eigenvectors and con-eigenvalues of $C$).

For our purposes, we are only interested in computing a single con-eigenvector
with associated con-eigenvalue of approximate size $\delta$ (see
Section~\ref{sub:Reduction-procedure}). However, the diagonal elements
of $D$ may be many orders of magnitude smaller than $\delta$, and
it is then natural to expect that, by computing a partial Cholesky
decomposition of $C$, we may obtain the $i$th con-eigenvector in
much fewer than $\mathcal{O}\left(n^{3}\right)$ operations. In this
case, we stop Demmel's algorithm for the Cholesky decomposition of
$C$ once the diagonal elements $D_{ii}^{2}$ are small with respect
to the product of $\delta^{2}$ and the machine round-off $\epsilon$,
that is, as soon as $D_{mm}^{2}\leq\delta^{2}\epsilon$ for some $m$
(notice that complete pivoting ensures that the diagonal elements
$D_{ii}$ are non-increasing). We then obtain $C\approx\widetilde{C}=\left(\widetilde{P}\widetilde{L}\right)\widetilde{D}^{2}\left(\widetilde{P}\widetilde{L}\right)^{*}$,
where $\widetilde{P}$ is an $m\times n$ matrix, $\widetilde{L}$
is an $n\times m$ matrix and $\widetilde{D}$ is a diagonal $m\times m$
matrix. Algorithms~\ref{Flo:algorithm for pivot order-1}~and~\ref{Flo:Cauchy factorization alg-1}
contain pseudo-code for computing $\widetilde{L}$, $\widetilde{D}$,
and $\widetilde{P}$. In the pseudo-code $I\left(n,m\right)$ denotes
the first $m\leq n$ columns of the $n\times n$ identity matrix.

\begin{algorithm}[H]
\caption{\label{Flo:algorithm for pivot order-1}$\text{Pivot\_Order}\left(a,b,x,y,\delta\right)$
pre-computes pivot order for Cholesky factorization of $n\times n$
positive-definite Cauchy matrix $C_{ij}=a_{i}b_{j}/\left(x_{i}+y_{j}\right).$
Input: $a$, $b$, $x$, and $y$ defining $C_{ij}=a_{i}b_{j}/\left(x_{i}+y_{j}\right)$,
and target size $\delta$ of con-eigenvalue. Output: correctly pivoted
vectors $a$, $b$, $x$, and $y$, truncation size $m$, and $m\times n$
permutation matrix $\widetilde{P}$}

$\left(a,b,x,y,\widetilde{P},m\right)\leftarrow\text{Pivot\_Order}\left(a,b,x,y,\delta\right)$
\begin{lyxcode}
Form~vector~$g_{i}:=a_{i}b_{i}/(x_{i}+y_{i})$,~$i=1,\ldots,n$

Set~cutoff~for~GECP~termination:~$\eta:=\epsilon\delta^{2}$

Initialize~permutation~matrix~($n\times n$~identity):~$\widetilde{P}=I\left(n,n\right)$

Compute~correctly~pivoted~vectors:

~$m:=1$

~~~while~$\left|g\left(m\right)\right|\geq\eta$~or~$m=n-1$
\begin{lyxcode}
~~Find~$m\leq l\leq n$~such~that~$\left|g(l)\right|=\max\left|g\left(m:n\right)\right|$

~~Swap~elements:

~~~$g(l)\leftrightarrow g(m)$,~$x(l)\leftrightarrow x(m)$~,~$y(l)\leftrightarrow y(m)$

~~~$a(l)\leftrightarrow a(m)$,$b(l)\leftrightarrow b(m)$

~~Swap~rows~of~permutation~matrix:

~~~$\widetilde{P}(l,:)\leftrightarrow\widetilde{P}(m,:)$

~~Update~diagonal~of~Schur~complement:

~~~$g(m+1:n):=\left(x\left(m+1:n\right)-x(m)\right)/\left(y\left(m+1:n\right)-y(m)\right)g(m+1:n)$

~~Increment~iteration~count:

~~~$m:=m+1$
\end{lyxcode}

Output~$a,b,x,y,\widetilde{P}\left(1:m,n\right),m$
\end{lyxcode}

\end{algorithm}

\begin{algorithm}[H]
\caption{\label{Flo:Cauchy factorization alg-1}$\text{Cholesky\_Cauchy}\left(x,y,a,b,\delta\right)$
computes partial Cholesky factorization of positive-definite Cauchy
matrix $C_{ij}=a_{i}b_{j}/\left(x_{i}+y_{j}\right)$. Input: $a$,
$b$, $x$, and $y$ defining $C_{ij}=a_{i}b_{j}/\left(x_{i}+y_{j}\right)$,
and target size $\delta$ of con-eigenvalue. Output: $n\times m$
matrix $\widetilde{L}$, $m\times m$ matrix $\widetilde{D}$, and
permutation $m\times n$ matrix $\widetilde{P}$ in partial Cholesky
factorization.}

$\left(\widetilde{L},\widetilde{D},\widetilde{P}\right)\leftarrow\text{Cholesky\_Cauchy}\left(a,b,x,y,\delta\right)$
\begin{lyxcode}
Compute~pivoted~vectors~and~matrix~size~$m$~(Algorithm~\ref{Flo:algorithm for pivot order-1}):

~~$\left(a,b,x,y,\widetilde{P},m\right)\leftarrow\text{Pivot\_Order}\left(a,b,x,y,\delta\right)$

Initialize~generators:

~~$\alpha:=a$,~$\beta:=b$

Compute~first~column~of~Schur~complement:

~~$G\left(:,1\right):=\alpha*\beta/\left(x+y\right)$

~~for~$k=2,m$
\begin{quotation}
Update generators:\end{quotation}
\begin{lyxcode}
~~~~~$\alpha\left(k:n\right):=\alpha\left(k:n\right)*\left(x\left(k:n\right)-x\left(k-1\right)\right)/\left(x\left(k:n\right)+y\left(k-1\right)\right)$

\textrm{~~~~}~~~\textrm{$\beta\left(k:n\right):=\beta\left(k:n\right)*\left(y\left(k:n\right)-y\left(k-1\right)\right)/\left(y\left(k:n\right)+x\left(k-1\right)\right)$}

~~~Extract~$k$th~column~for~Cholesky~factors:

~~~~~$G\left(k:n,k\right):=\alpha\left(k:n\right)*\beta\left(k:n\right)/\left(x\left(k:n\right)+y\left(k:n\right)\right)$~
\end{lyxcode}
Output~partial~Cholesky~factors:

~~$\widetilde{D}=\text{diag}\left(G(1:n,1:m\right)^{1/2}$,~$\widetilde{L}=\text{tril}\left(G(1:n,1:m)\right)\widetilde{D}^{-2}+I\left(n,m\right)$,~$\widetilde{P}$
\end{lyxcode}

\end{algorithm}

Once the partial Cholesky decomposition $C\approx\widetilde{C}=\left(\widetilde{P}\widetilde{L}\right)\widetilde{D}^{2}\left(\widetilde{P}\widetilde{L}\right)^{*}$
is computed, Algorithm~\ref{Flo:con-eig comp of XDX^*} for the eigenvalue
problem of $\overline{\widetilde{C}}\widetilde{C}$ may then be used,
with $X=\widetilde{P}\widetilde{L}$ and $D=\widetilde{D}$, to compute
accurate con-eigenvalues and con-eigenvectors of $\widetilde{C}$
(see Theorem~\ref{thm:truncation theorem}). Since the con-eigenvalues
decay exponentially fast, the complexity of this algorithm is $\mathcal{O}\left(n\left(\log(\delta\epsilon)^{-1}\right)^{2}\right)$
operations. Therefore, when used in the reduction procedure outlined
in Section~\ref{sub:Reduction-procedure}, the near optimal rational
approximation may be obtained by computing the SVD of a matrix that
is roughly twice the size of the optimal number of poles. The pseudo-code
is given in Algorithm~\ref{Flo:Accurate-con-eigenvalue-decomposition}. 

\begin{algorithm}[H]
\caption{\label{Flo:Accurate-con-eigenvalue-decomposition}$\text{Con\_Eigvector}\left(a,b,x,y,\delta\right)$
computes accurate con-eigenvalue decomposition of positive-definite
Cauchy matrix $C_{ij}=a_{i}b_{j}/\left(x_{i}+y_{j}\right).$ Input:
$a$, $b$, $x$, and $y$ defining $C_{ij}=a_{i}b_{j}/\left(x_{i}+y_{j}\right)$,
and target size $\delta$ of con-eigenvalue. Output: con-eigenvalues
lager than $\delta$, and associated con-eigenvectors.}

$\left(\Sigma,T\right)\leftarrow\text{Con\_Eigvector}\left(a,b,x,y,\delta\right)$
\begin{lyxcode}
1.~Compute~partial~Cholesky~factors~$\left(L,D,P\right)\leftarrow\text{Cholesky\_Cauchy}\left(a,b,x,y,\delta\right)$~(Algorithm~\ref{Flo:Cauchy factorization alg-1})~and~set~$X=PL$

2.~Compute~con-eigenvalues~and~con-eigenvectors~$\left(\Sigma,T\right)\leftarrow\text{ConEig\_RRD}\left(X,D\right)$~using~Algorithm~\ref{Flo:con-eig comp of XDX^*}

3.~Select~largest~$l$~such~that~$\Sigma_{ll}\geq\delta$~and~output~$\Sigma\left(1:l,1:l\right)$,~$T\left(1:n,1:l\right)$
\end{lyxcode}

\end{algorithm}

\begin{rem}
\label{rem:In-applications-involving-functions with singuularities}In
applications involving functions $f\left(e^{2\pi ix}\right)$ with
singularities or sharp transitions, the poles $\gamma_{i}$ are given
in the form $\gamma_{i}=\exp\left(-\tau_{i}\right)$, where $\mathcal{R}e\tau_{j}>0$
and $0\le\mathcal{I}m\tau_{j}<2\pi$ and the exponents $\tau_{i}$
are known with high relative accuracy. Indeed, this form naturally
arises either via a discretization of an integral (see \cite{BEY-MON:2005,BEY-MON:2010})
or as a result of an intermediate computation as in \cite{HA-BE-MO:2012}.
This leads us to modify Algorithms~\ref{Flo:algorithm for pivot order-1}~and~\ref{Flo:Cauchy factorization alg-1}
so that high relative accuracy is achieved for poles of this form.
In particular, we modify formulas (\ref{eq:schur complement recursion}),
(\ref{eq:recursion for schur complement}) and (\ref{eq:recursion for alpha's, beta's})
in Section~\ref{sec:Background-on-algorithms}. For example, the
formula for $\alpha_{i}^{(k)}$ in (\ref{eq:recursion for alpha's, beta's})
involves computing \begin{eqnarray*}
\frac{x_{j}-x_{k-1}}{x_{j}+y_{k-1}} & = & \frac{\gamma_{j}^{-1}-\gamma_{k-1}^{-1}}{\gamma_{j}^{-1}-\overline{\gamma_{k-1}}}=\frac{1-\exp\left(-\tau_{j}+\tau_{k-1}\right)}{1-\exp\left(-\tau_{j}-\overline{\tau_{k-1}}\right)}.\end{eqnarray*}
The simple modification is to use the Taylor expansion $1-\exp\left(z\right)\approx z+z^{2}/2+\dots$
if $\left|z\right|$ is small. The other formulas in (\ref{eq:schur complement recursion}),
(\ref{eq:recursion for schur complement}) and (\ref{eq:recursion for alpha's, beta's})
are modified in a similar fashion, allowing the LDU factorization
of $C$ to be computed with high relative accuracy.

In Section \ref{sec:Examples-of-optimal}, we consider a case where
the absolute values of many poles agree with $1$ to twelve digits
(i.e., the poles $\gamma_{i}$ satisfy$\left|\gamma_{i}\right|\approx0.999999999999xxxx$).
\end{rem}

\section{Examples of optimal rational approximations\label{sec:Examples-of-optimal}}

In this section, we consider some applications of the reduction algorithm.

\subsection{Optimal rational approximations of functions with singularities }

Using the reduction algorithm, as well as tools developed in \cite{BEY-MON:2005,BEY-MON:2010},
we construct a (near) optimal rational approximation of a (piecewise
smooth) function $f$ with a finite number of isolated integrable
singularities. For simplicity, we assume that singularities of $f$
are at two points, $0$ and $x_{0}$. 

Performing integration by parts $L$ times on the expression for the
Fourier coefficients,\[
\hat{f}_{n}=\int_{0}^{1}f(x)e^{2\pi inx}dx=\int_{0}^{x_{0}}f(x)e^{2\pi inx}dx+\int_{x_{0}}^{1}f(x)e^{2\pi inx}dx,\]
we obtain 

\begin{eqnarray*}
\widehat{f}_{n} & =h_{n} & +\frac{\left(-1\right)^{L}}{\left(2\pi in\right)^{L}}\int_{0}^{x_{0}}f^{(L)}(x)e^{2\pi inx}dx+\frac{\left(-1\right)^{L}}{\left(2\pi in\right)^{L}}\int_{x_{0}}^{1}f^{(L)}(x)e^{2\pi inx}dx,\end{eqnarray*}
where \[
h_{n}=\sum_{p=1}^{L}\frac{\left(-1\right)^{p}}{\left(2\pi in\right)^{p}}\left(e^{2\pi inx_{0}}F^{(p-1)}\left(x_{0}\right)+F^{(p-1)}\left(0\right)\right),\]
$F^{(p)}\left(x\right)=f^{(p)}\left(x^{+}\right)-f^{(p)}\left(x^{-}\right)$
and $x^{+}$, $x^{-}$ indicate directional limits. As the first step
in constructing a (near) optimal rational approximation to $f$, we
subtract the leading $L$ terms of the asymptotic expansion of $\widehat{f}_{n}$
and consider $g_{n}=\widehat{f}_{n}-h_{n}$. Since $g_{n}$ decays
like $O\left(1/n^{L+1}\right)$, it is sufficient to use the algorithm
in \cite{BEY-MON:2005,BEY-MON:2010} to construct an approximation
\begin{equation}
\left|g_{n}-\sum_{m=1}^{M}w_{m}e^{-\mu_{m}n}\right|\le\epsilon,\,\,\,\, n\geq1.\label{eq:g_n}\end{equation}
This algorithm requires quadruple precision for computing small singular
values of a Hankel matrix but, due to the fast decay of $g_{n}$,
the matrix is small so that the computational cost is insignificant.
An alternative method for obtaining (\ref{eq:g_n}) based on rational
representations of B-splines requires only double precision and will
appear elsewhere \cite{D-B-H-M:2012p}. For $h_{n}$ we use a discretization
of the integral representation for $1/n^{p}$ in \cite{BEY-MON:2010}
to obtain \begin{equation}
\left|\frac{1}{n^{p}}-\sum_{m=-M_{1}}^{M_{2}}a_{m,p}e^{-\tau_{m}n}\right|\le\epsilon,\,\,\,\,1\le p\le L,\,\,\,\,1\leq n,\label{eq:integral for 1/n^p}\end{equation}
where $\tau_{m}=e^{hm}$, $a_{m,p}=\frac{h}{\left(p-1\right)!}e^{phm}$
and $h$ is the step size used in the discretization. Results in \cite{BEY-MON:2010}
imply that there are at most $\mathcal{O}\left(\left(\log\epsilon^{-1}\right)^{2}\right)$
terms in the approximation of $1/n^{p}$ for a given accuracy $\epsilon$,
for all $n\geq1$. Note that when $m<0$ the nodes $\gamma_{m}=e^{-e^{hm}}\approx1-e^{hm}$
are very close to one. 

Thus, we arrive at \begin{equation}
\left|h_{n}-\sum_{m=-M_{1}}^{M_{2}}a_{m}e^{-\left(\tau_{m}+2\pi ix_{0}\right)n}-\sum_{m=-M_{1}}^{M_{2}}b_{m}e^{-\tau_{m}n}\right|\le2\epsilon,\label{eq:h_n}\end{equation}
where\[
a_{m}=\sum_{p=1}^{L}\frac{1}{\left(-2\pi i\right)^{p}}F^{(p-1)}\left(x_{0}\right)a_{m,p},\,\,\,\, b_{m}=\sum_{p=1}^{L}\frac{1}{\left(-2\pi i\right)^{p}}F^{(p-1)}\left(0\right)a_{m,p}.\]
Combining the approximations (\ref{eq:g_n}) and (\ref{eq:h_n}),
we obtain the suboptimal approximation \begin{equation}
\left|\widehat{f}_{n}-\sum_{m=1}^{M}w_{m}e^{-\mu_{m}n}-\sum_{m=-M_{1}}^{M_{2}}a_{m}e^{-\left(\tau_{m}+2\pi ix_{0}\right)n}-\sum_{m=-M_{1}}^{M_{2}}b_{m}e^{-\tau_{m}n}\right|\le3\epsilon,\label{eq:approx of f_n, total}\end{equation}
where the number of terms is excessive (for the accuracy $3\epsilon$).
We now use the reduction algorithm on (\ref{eq:approx of f_n, total})
to obtain a nearly optimal number of terms to approximate the Fourier
coefficients $f_{n}$ for $n\ge1$. This, in turn leads to a near
optimal rational approximation to $f(x)$ with a nearly equioscillating
error. 

As an example, we apply this procedure to \begin{equation}
f(x)=\begin{cases}
\sin(4/3\pi x), & 0\le x\le3/4\\
0 & 3/4<x\le1\end{cases}\label{eq:BrokenSine}\end{equation}
Choosing the parameters $M_{1}=200$, $M_{2}=10$, and $h=.316707$
in (\ref{eq:approx of f_n, total}) (see \cite{BEY-MON:2010} for
how to select the parameters) yields a sub-optimal approximation containing
$426$ pairs of conjugate-reciprocal poles $\gamma_{j}=e^{-\tau_{j}}$,
which approximates $f\left(x\right)$ in the $L^{\infty}$ norm with
error $\approx5\times10^{-14}$. We note that many of the poles are
extremely close to the unit disk (the magnitudes $\left|\gamma_{i}\right|\approx.999999999999xxxx$
of over a dozen poles agree with $1$ to twelve digits).

We apply the reduction algorithm using the approximation error $\delta=10^{-13}$
(thus, the Cholesky decomposition algorithm \ref{Flo:Cauchy factorization alg-1}
is truncated once the diagonal elements are smaller than $\epsilon\delta^{2}$,
where $\epsilon$ denotes the machine roundoff). As explained in Remark~\ref{rem:In-applications-involving-functions with singuularities},
Algorithms~\ref{Flo:algorithm for pivot order-1}~and~\ref{Flo:Cauchy factorization alg-1}
are modified to accurately compute the partial Cholesky decomposition
for poles in the form $\gamma_{j}=e^{-\tau_{j}}$. After applying
the reduction algorithm with approximation error $\delta=10^{-13}$,
the resulting rational approximation contains $92$ pairs of conjugate-reciprocal
poles (i.e., about $46$ poles per singularity). The resulting error
is shown in Figure~\ref{fig:rational approx half-sine}.

We note that the only step of the reduction procedure where quadruple
precision is used is in computing the residues $\beta_{j}$ (see Step
$3$ of Section~\ref{sub:Reduction-procedure}). However, using the
techniques described in the background Section~\ref{sub:Accurate-factorization-of}
to factor the $m\times m$ Cauchy matrix, this step takes only $\mathcal{O}\left(m^{2}\right)$
operations, and so does not impact the overall speed of the algorithm
(recall that $m$ denotes the number of reduced poles).

\begin{figure}
\begin{centering}
\includegraphics[scale=0.55]{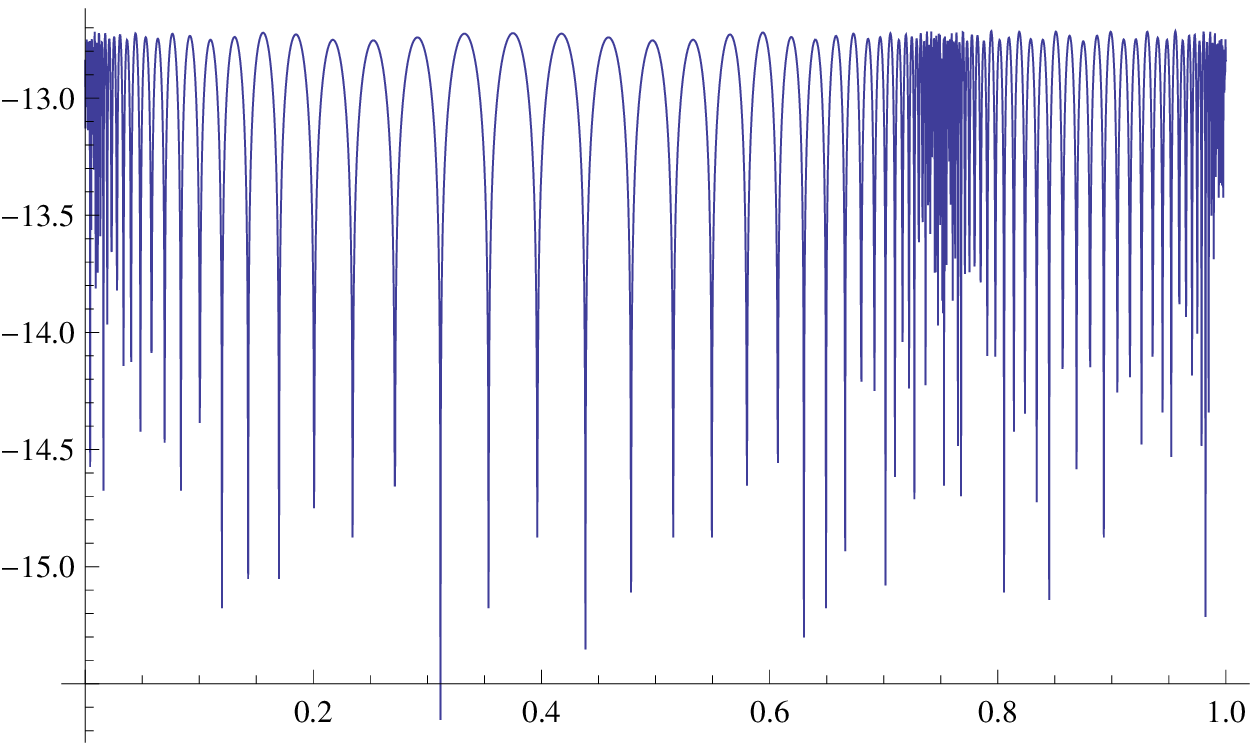} \includegraphics[scale=0.55]{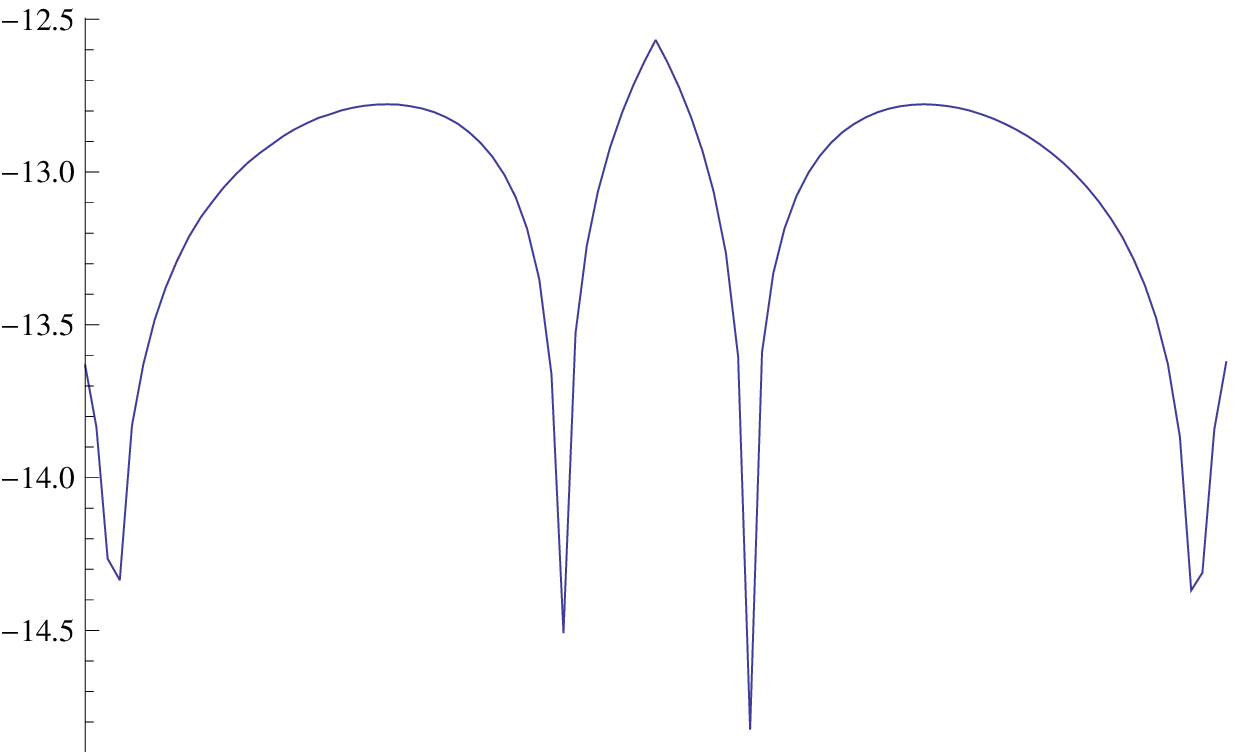}
\par\end{centering}

\begin{centering}
~~~~~~~~~~~~~~~~~~~~~~~~~~~(a)~~~~~~~~~~~~~~~~~~~~~~~~~~~~~~~~~~~~~~~~~~~~~~~~~(b)
~~~~~~~~~~~~~~~~~~~~~~~
\par\end{centering}

\caption{\label{fig:rational approx half-sine} (a) Error of the rational approximation
to $f\left(x\right)$ in (\ref{eq:BrokenSine}). (b) A zoom on a neighbourhood
around one of the singularities $x\in\left(3/4-10^{-12},3/4+10^{-12}\right)$.}

\end{figure}

We find that the exponents, $\eta_{i}$, of the near optimal poles
$\zeta_{i}=\exp\left(-\eta_{i}\right)$ are computed with high relative
accuracy, i.e., \[
\left|\text{Re}\left(\eta_{i}\right)-\text{Re}\left(\widehat{\eta_{i}}\right)\right|\leq\left|\text{Re}\left(\eta_{i}\right)\right|\delta_{1},\,\,\,\,\left|\eta_{i}-\widehat{\eta_{i}}\right|\leq\left|\eta_{i}\right|\delta_{2},\]
where $\delta_{1}\leq1.48\times10^{-13}$ and $\delta_{2}\leq14.87\times10^{-13}$.
As a gauge we used the poles $\zeta_{i}$ obtained in Mathematica$\,^{TM}$
via extended precision arithmetic. We note that the real parts of
some of the exponents $\eta_{i}$ are of size $\left|\text{Re}\left(\eta_{i}\right)\right|\approx10^{-12}$.
{}

\subsection{Solving viscous Burgers' equation}

In \cite{HA-BE-MO:2012} we use the reduction algorithm to solve viscous
Burgers' equation, \begin{equation}
u_{t}-uu_{x}=\nu u_{xx},\,\,\, u(x,0)=u_{0}(x),\,\,\, u(0,t)=u(1,t),\,\,\, x\in[0,1],\,\,\, t\ge0.\label{eq:Burgers}\end{equation}
The solution of this equation develops a shock (or a sharp transition)
on an interval of size $\mathcal{O}\left(\nu\right)$. We approximate
solutions to (\ref{eq:Burgers}) using rational functions of the form
\[
u\left(x,t\right)=\sum_{j=1}^{M_{0}}\frac{\alpha_{j}\left(t\right)}{e^{-2\pi ix}-\gamma_{j}\left(t\right)}+\sum_{j=1}^{M_{0}}\frac{\overline{\alpha_{j}\left(t\right)}}{e^{2\pi ix}-\overline{\gamma_{j}\left(t\right)}}+\alpha_{0}.\]
The key idea is to develop a numerical calculus using the reduction
algorithm. Although operators such as multiplication and convolution
increase the number of poles in the representation, the reduction
algorithm is employed at each stage to keep the number of poles near
optimally small. Overall, about $10^{6}$ applications of the reduction
algorithm were employed to compute the solutions illustrated below,
thus confirming its robustness and efficiency.

Figure~\ref{fig:moving front, t=00003D10^-5, all } shows the computed
solutions $u(x,h_{t}j)$ to (\ref{eq:Burgers}), with the viscosity
$\nu=10^{-5}$, the step size $h_{t}$ and the initial condition $u_{0}(x)=\sin(2\pi x)+1/2\sin(4\pi x)$.
In our reduction procedure, we used the step size of $h_{t}=10^{-5}$
and the error tolerance $\delta=10^{-9}$ (to match the error of our
time discretization). The solution $u(x,h_{t}j)$ is shown for time
steps $t_{j}=h_{t}j$, $j=10^{2},\,10^{4},\,2\times10^{4},\,3\times10^{4},\,5\times10^{4}$.
We see that the solution $u(x,t)$ develops two moving sharp transition
regions, which approach each other and eventually merge into a single
one about $x\approx1/2$. The rational representations of $u(x,t_{j})$
have $4$, $11$, $33$, $29$, and $19$ pairs of conjugate-reciprocal
poles, respectively. It also demonstrates that the transition regions
of $u(x,t)$ occur within intervals of width of $\mathcal{O}\left(\nu\right)$.

\begin{figure}[h]
\begin{centering}
\includegraphics[scale=0.6]{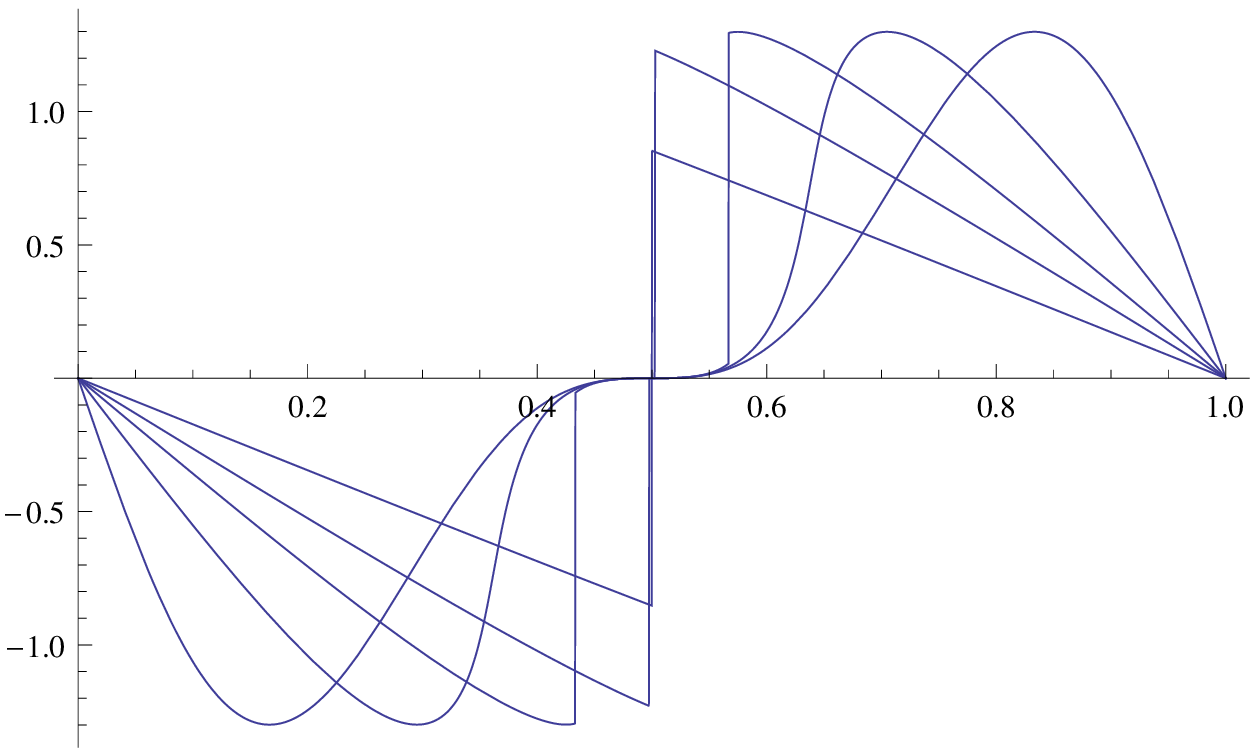}\includegraphics[scale=0.6]{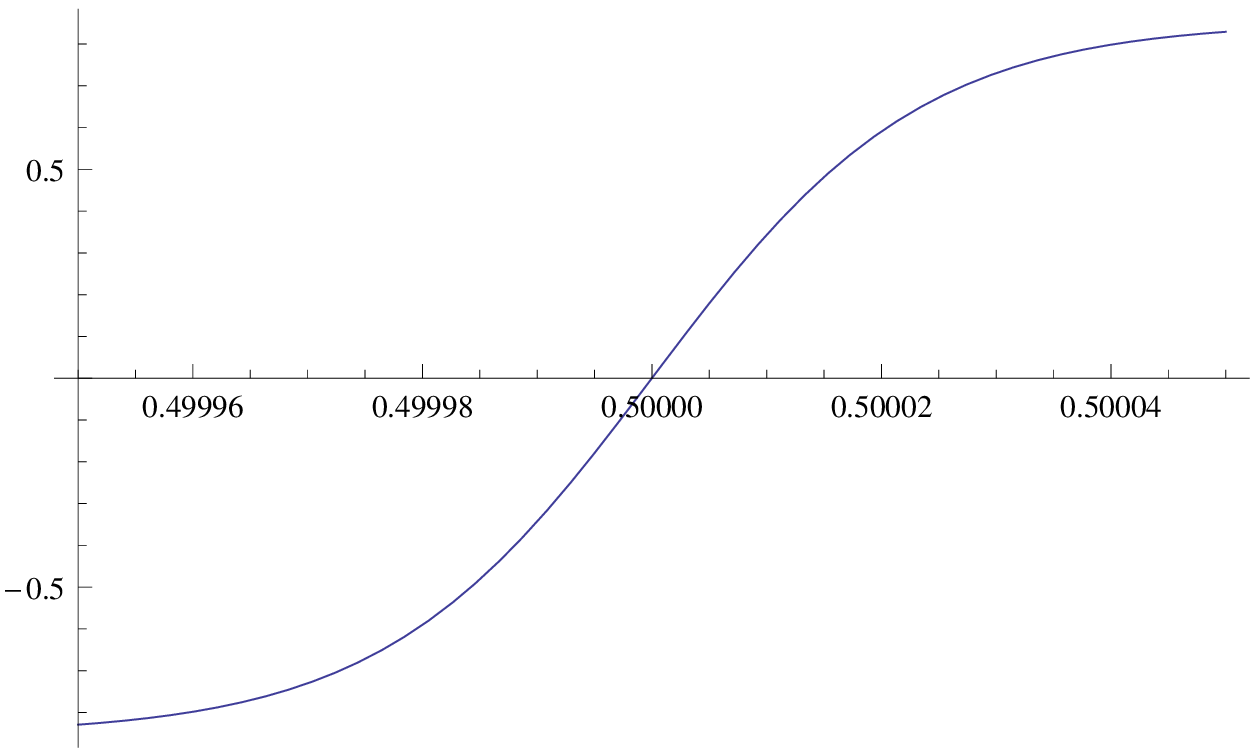}
\par\end{centering}

\caption{\label{fig:moving front, t=00003D10^-5, all }(a) Solution $u(x,t_{j})$,
for $t_{j}=10^{-3}$, $.1$, $.2$, $.3$, and $.5$. (b) $u(x,t_{j})$
in the transition region $\left(1/2-10^{-5},1/2+10^{-5}\right)$,
for $t_{j}=0.4$ (from \cite{HA-BE-MO:2012}). These solutions are
represented with $4$, $11$, $33$, $29$, and $19$ pairs of conjugate-reciprocal
poles. }

\end{figure}

\section{Accuracy verification\label{sub:Experimental-verification}}

We test the accuracy of Algorithm~\ref{Flo:Accurate-con-eigenvalue-decomposition}
on $500$ random Cauchy matrices, $C_{ij}=\left(\alpha_{i}\overline{\alpha_{j}}\right)/\left(1-\gamma_{i}\overline{\gamma_{j}}\right)$,
$i,j=1,\dots,120$. The complex poles $\gamma_{j}=\rho_{j}e^{2\pi i\phi_{j}}$
and residues $\alpha_{j}=\zeta_{j}e^{2\pi i\psi_{j}}$ are generated
by taking $\rho_{j}$, $\phi_{j}$, and $\psi_{j}$ from the uniform
distribution on $\left(0,1\right)$, and taking $\zeta_{j}$ from
the uniform distribution on $\left(0,10\right)$. For each randomly
generated matrix, we first compute, as a gauge, $\overline{C}C=Z\Sigma Z^{-1}$
using the in-built Mathematica$\,^{TM}$ eigenvalue solver with $300$
digits of precision, and compare the result with $\widehat{Z}$ and
$\widehat{\Sigma}$ computed via Algorithm~\ref{Flo:Accurate-con-eigenvalue-decomposition}
using standard double precision. We then evaluate the maximum relative
error in the con-eigenvalues $\lambda_{j}=\Sigma_{jj}$, $\max_{j}\left|\lambda_{j}-\widehat{\lambda_{j}}\right|/\left|\lambda_{j}\right|$,
and the maximum error in the computed con-eigenvectors, $\max_{j}\left\Vert Z\left(:,j\right)-\widehat{Z}\left(:,j\right)\right\Vert _{2}/\left\Vert Z\left(:,j\right)\right\Vert _{2}$.
We first scale $\widehat{Z}\left(:,j\right)$ by the complex-valued
constant $Z\left(i_{0},j\right)/\widehat{Z}\left(i_{0},j\right)$,
$i_{0}=\max_{1\leq i\leq n}\left|Z\left(i,j\right)\right|$, since
$Z\left(:,j\right)$ and $\widehat{Z}\left(:,j\right)$ are defined
only up to an arbitrary complex-valued factor.

Figures~\ref{Flo:poles/con-eig values} and \ref{Flo:error in con-eigs/con-vecs}
summarize the result of a typical run. Figure~\ref{Flo:poles/con-eig values}(a)
shows the distribution of the poles $\gamma_{j}$ inside the unit
disk and Figure~\ref{Flo:poles/con-eig values}(b) displays $\log_{10}\lambda_{j}^{2}$
as a function of the index~$j$. Figures~\ref{Flo:error in con-eigs/con-vecs}(a)~~\ref{Flo:error in con-eigs/con-vecs}(b)
show the relative errors in the con-eigenvalues $\left|\lambda_{j}-\widehat{\lambda_{j}}\right|/\left|\lambda_{j}\right|$
and the normalized con-eigenvectors $\left\Vert z_{j}-\widehat{z_{j}}\right\Vert _{2}/\left\Vert z_{j}\right\Vert _{2}$,
both as functions of the index $j$. 

\begin{figure}[h]
(a)~~ \includegraphics[scale=0.45]{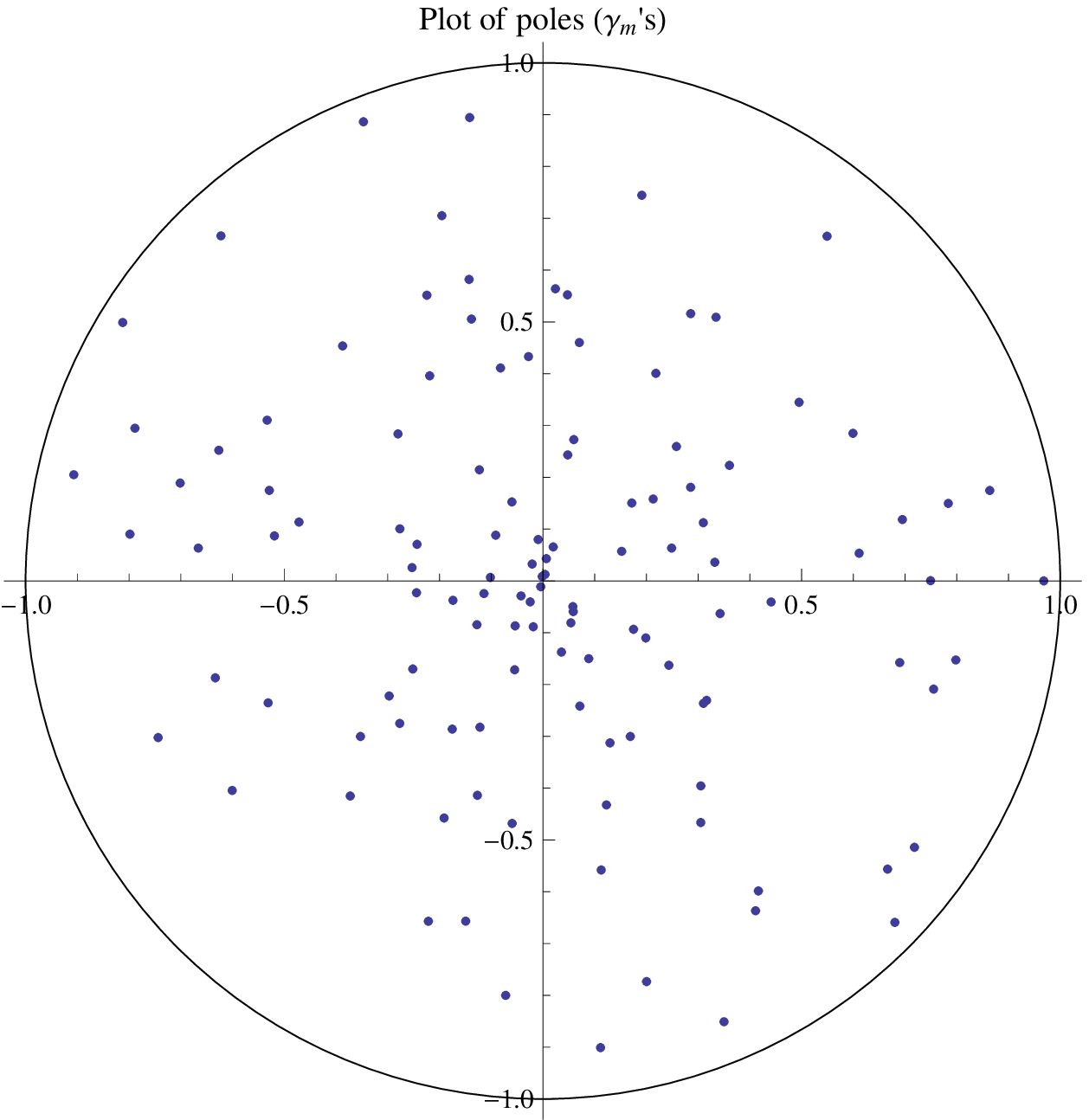} ~~~~~~~~~(b)~~
\includegraphics[scale=0.65]{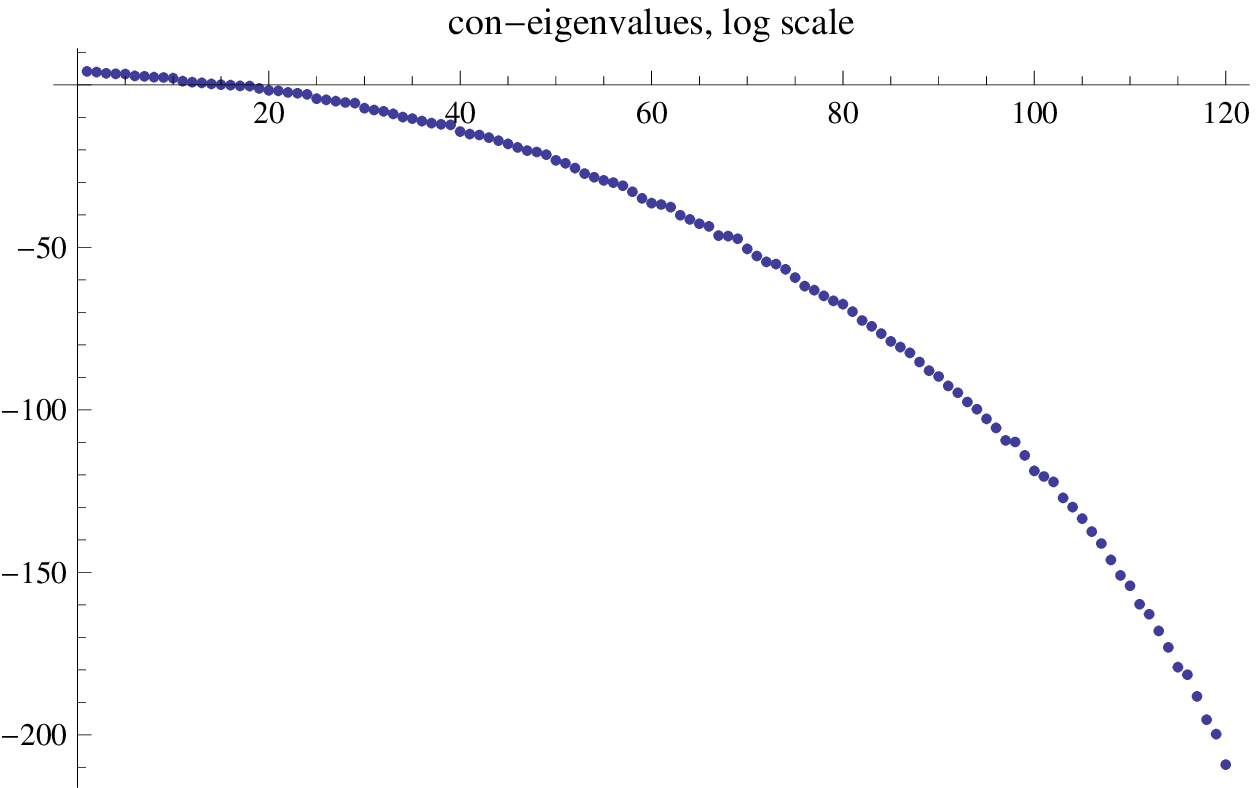}

\caption{\label{Flo:poles/con-eig values}(a) Distribution of poles $\gamma_{j}$
determining Cauchy matrix $C$ in a typical run. (b) Exponential decay
of the eigenvalues $\lambda_{j}^{2}$ of $\overline{C}C$ as a function
of the index $j$ using $\log_{10}$ scale. }

\end{figure}

\begin{figure}[h]
(a)~~ \includegraphics[scale=0.55]{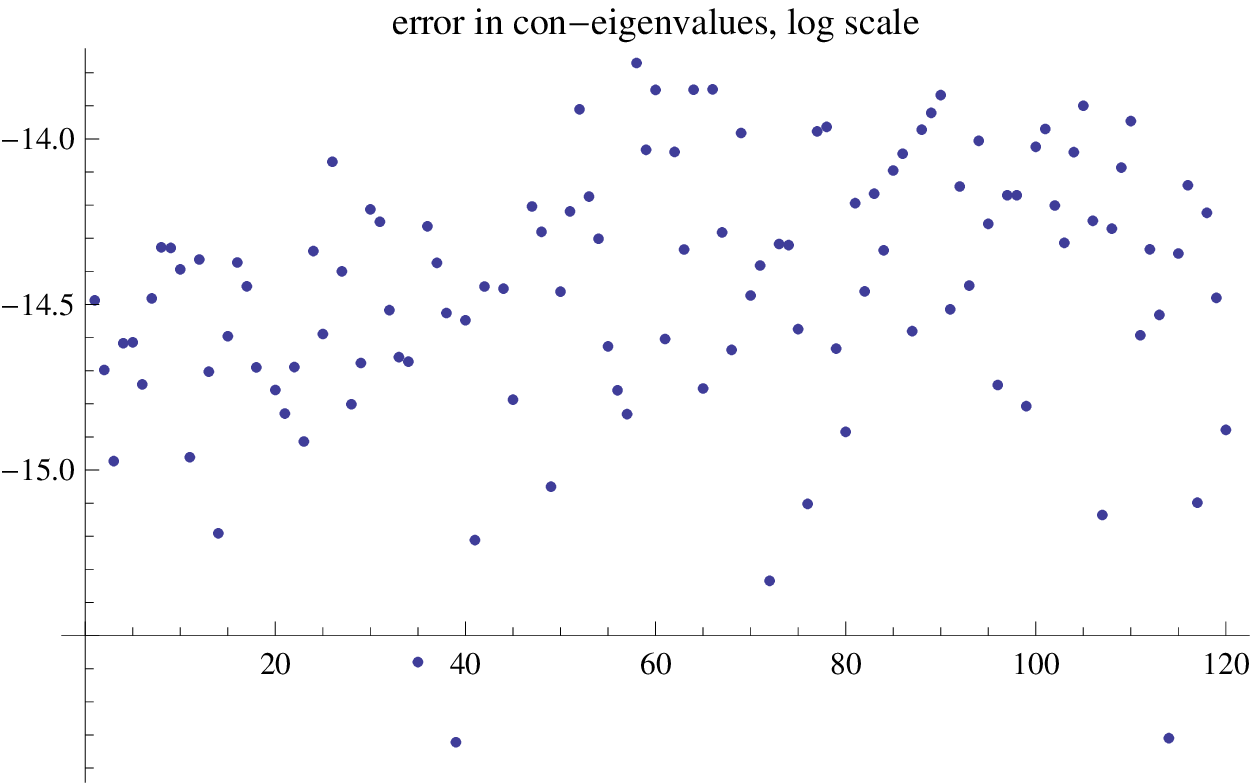} ~~~~~~~(b)\includegraphics[scale=0.55]{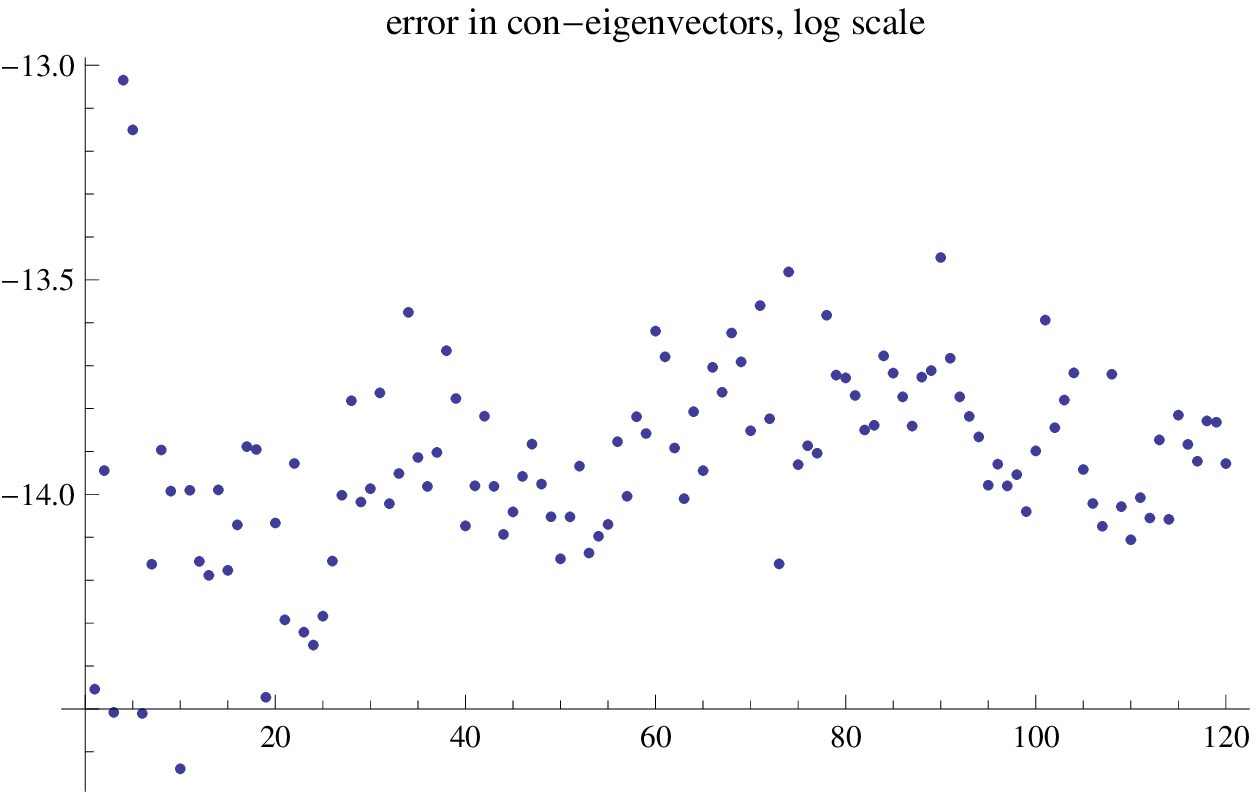}\caption{\label{Flo:error in con-eigs/con-vecs}(a) Relative error in the $j$th
con-eigenvalue, $\left|\lambda_{j}-\widehat{\lambda_{j}}\right|/\left|\lambda_{j}\right|$,
as a function of the index $j$. (b) The error in the $j$th con-eigenvector,
$\left\Vert z_{j}-\widehat{z_{j}}\right\Vert _{2}/\left\Vert z_{j}\right\Vert _{2}$,
$z_{j}=Z\left(:,j\right)$, as a function of the index $j$.}

\end{figure}

In Figures~\ref{Flo:error in con-eig values, various indices} and
\ref{Flo:error in con-eig vectors, various indices} for each of the
$500$ random Cauchy matrices, we plot the error in the computed con-eigenvalues
$\left|\widehat{\lambda_{j}}-\lambda_{j}\right|/\left|\lambda_{j}\right|$
and con-eigenvectors $\left\Vert \widehat{z_{j}}-z_{j}\right\Vert _{2}/\left\Vert z_{j}\right\Vert _{2}$for
$j=1,40,80,120$ (note the exponential decay of $\lambda_{j}$). We
see that the con-eigenvalues and the con-eigenvectors are computed
with nearly full precision for all the Cauchy matrices. In fact, the
largest errors $\left|\widehat{\lambda_{j}}-\lambda_{j}\right|/\left|\lambda_{j}\right|$
and $\left\Vert \widehat{z_{j}}-z_{j}\right\Vert _{2}/\left\Vert z_{j}\right\Vert _{2}$
in the computed con-eigenvalues and con-eigenvectors, for any of the
$500$ Cauchy matrices and any $1\leq j\leq n$, are $5.13\times10^{-12}$
and $5.35\times10^{-12}$.

\begin{figure}[h]
\begin{centering}
(a) ~~\includegraphics[scale=0.55]{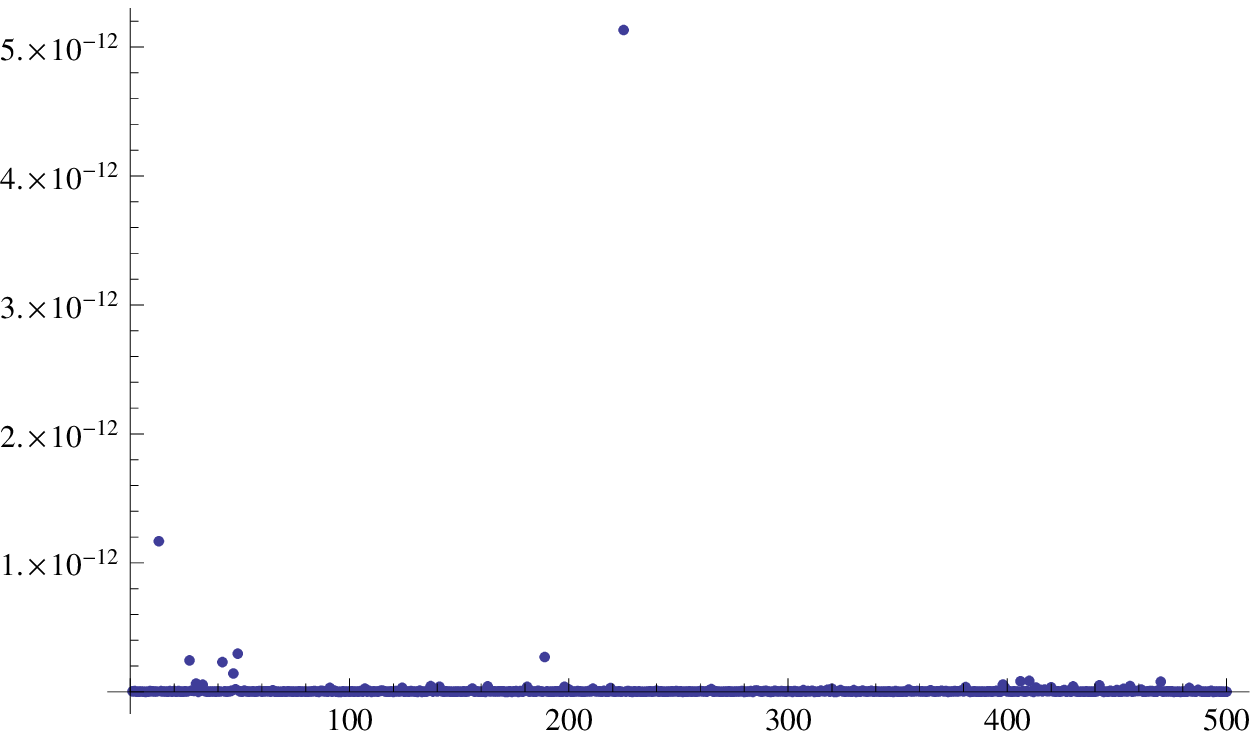} ~~~~~~(b)~~\includegraphics[scale=0.55]{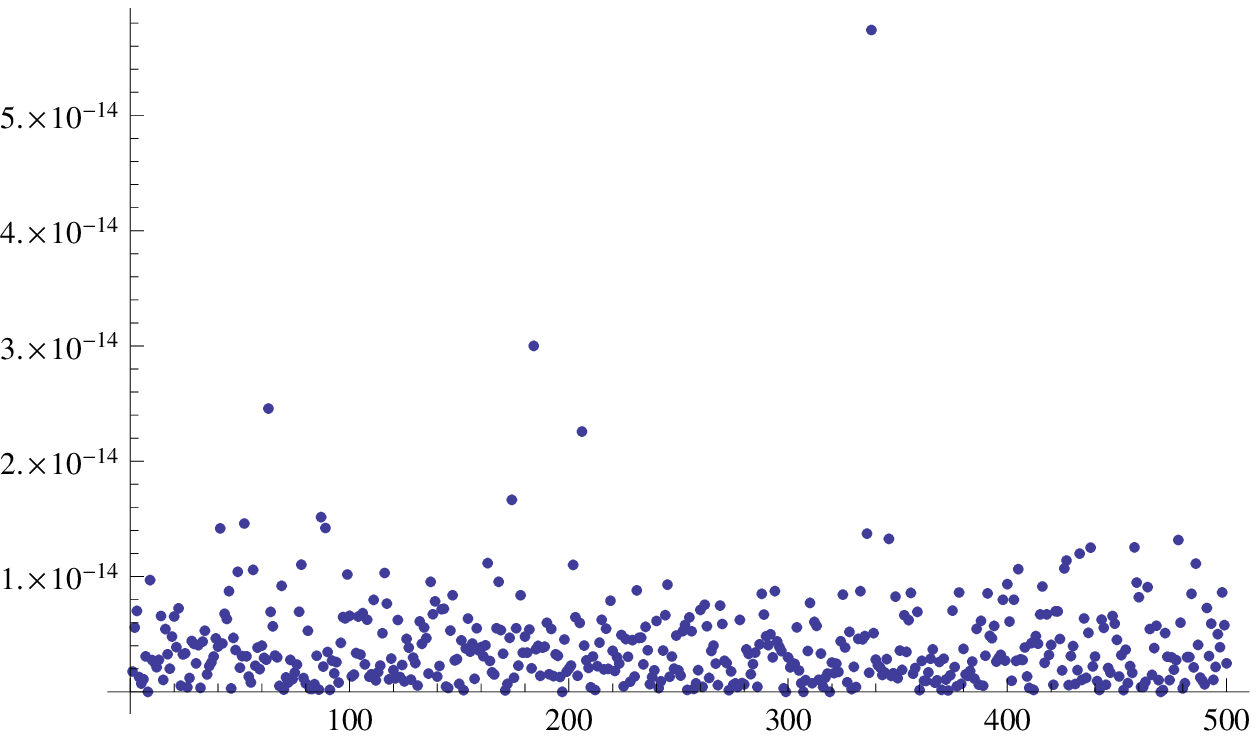}
\par\end{centering}

\begin{centering}
(c)~~\includegraphics[scale=0.55]{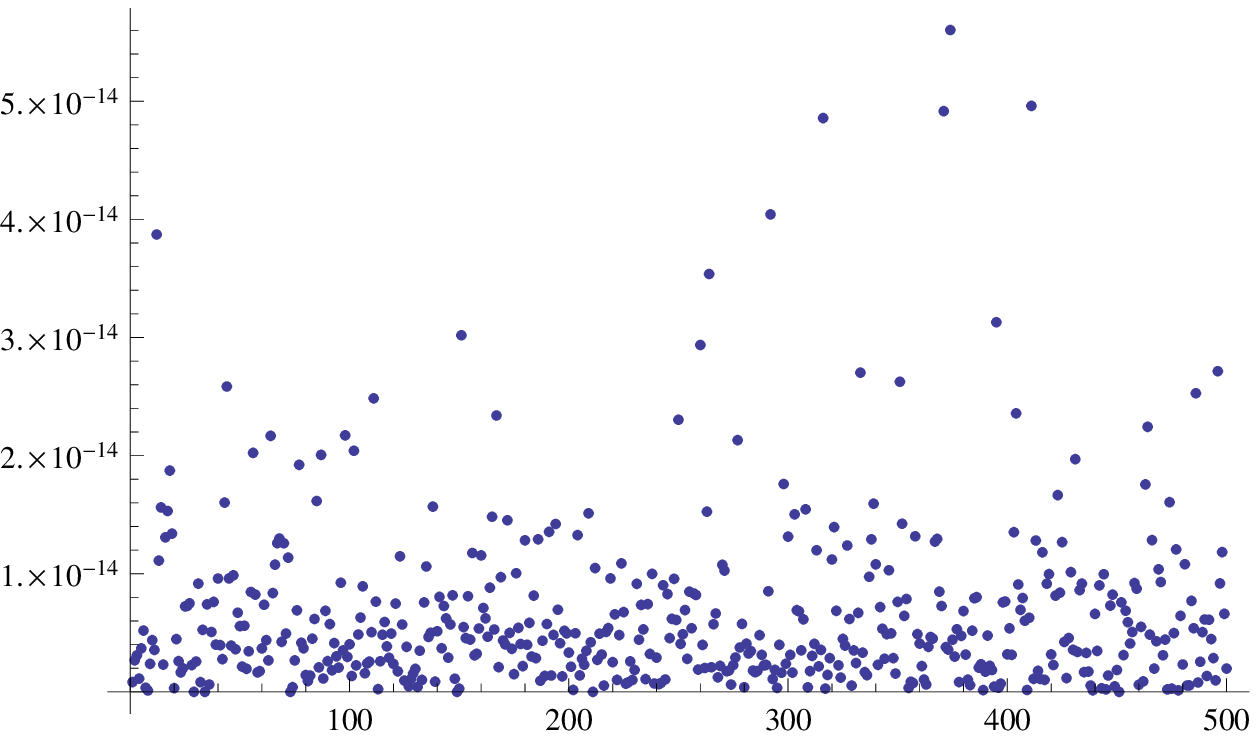}~~~~~~
(d)~~\includegraphics[scale=0.55]{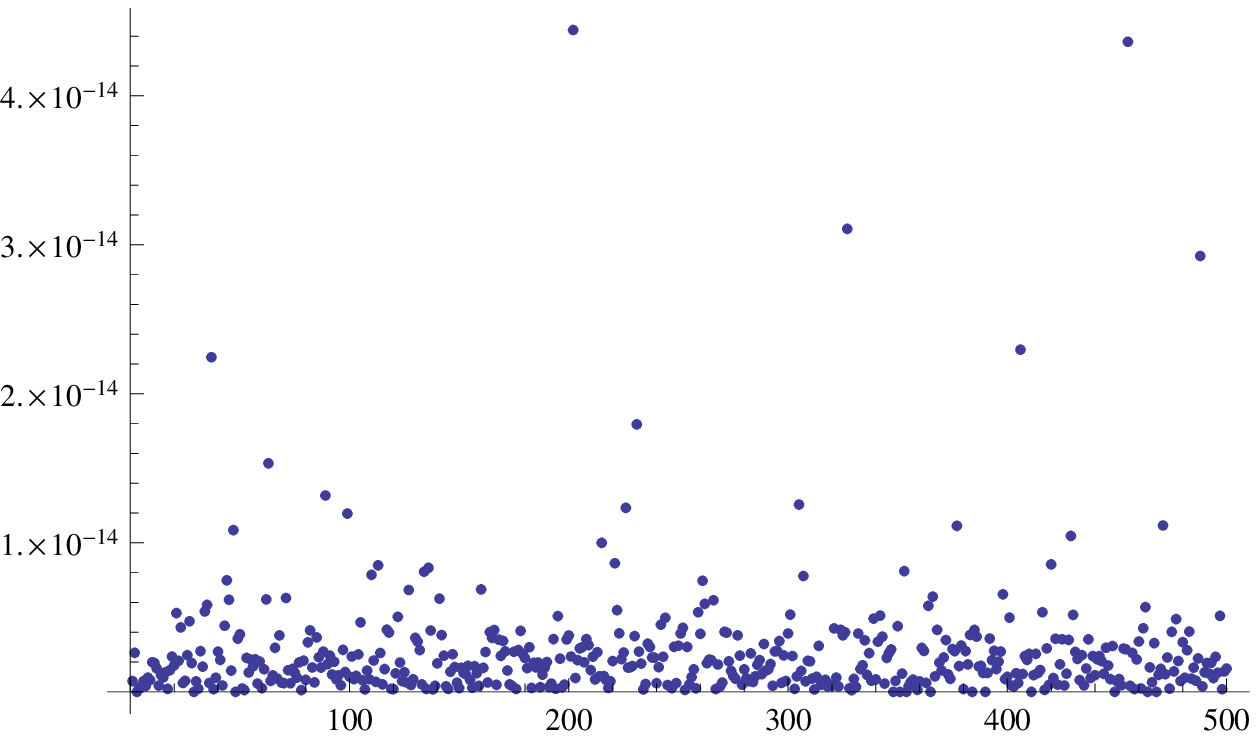}
\par\end{centering}

\caption{\label{Flo:error in con-eig values, various indices}Relative error
in the computed con-eigenvalues, $\left|\widehat{\lambda_{j}}-\lambda_{j}\right|/\left|\lambda_{j}\right|$,
for $j=1,40,80,120$ ((a), (b), (c), and (d), respectively), plotted
for each of the $500$ random Cauchy matrices.}

\end{figure}

\begin{figure}[h]
\begin{centering}
(a)~~\includegraphics[scale=0.55]{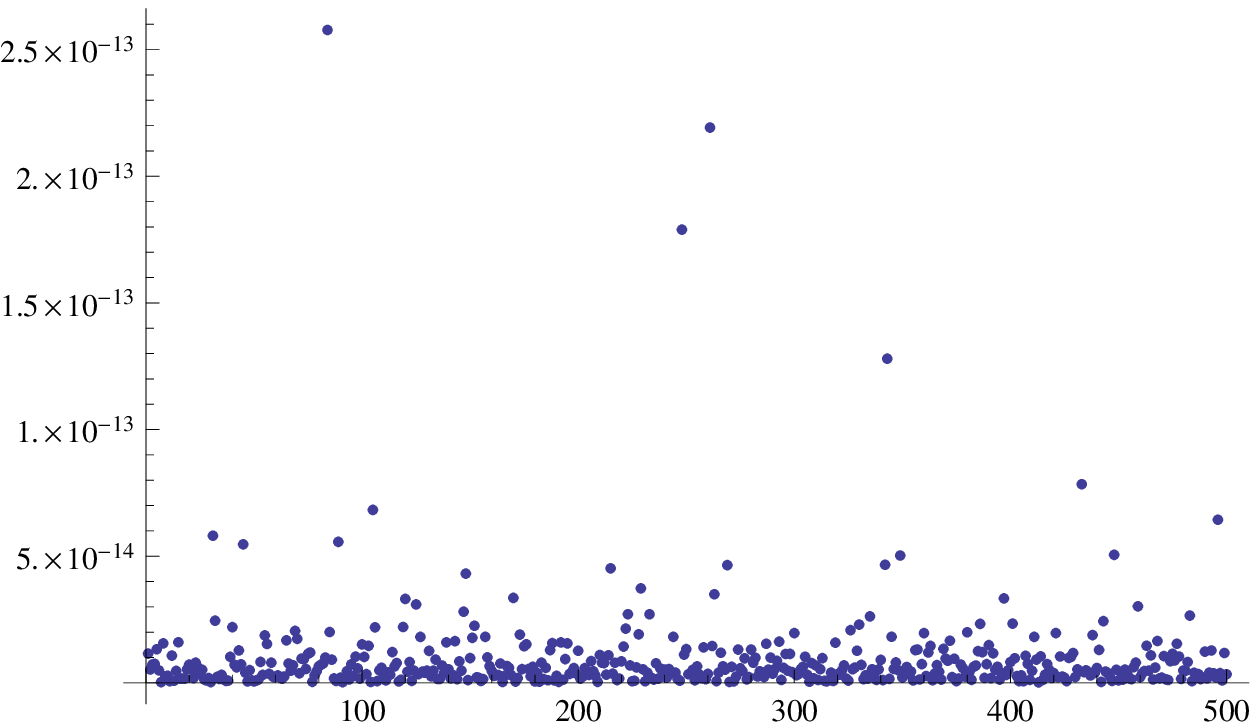}~~~~~~
(b)~~\includegraphics[scale=0.55]{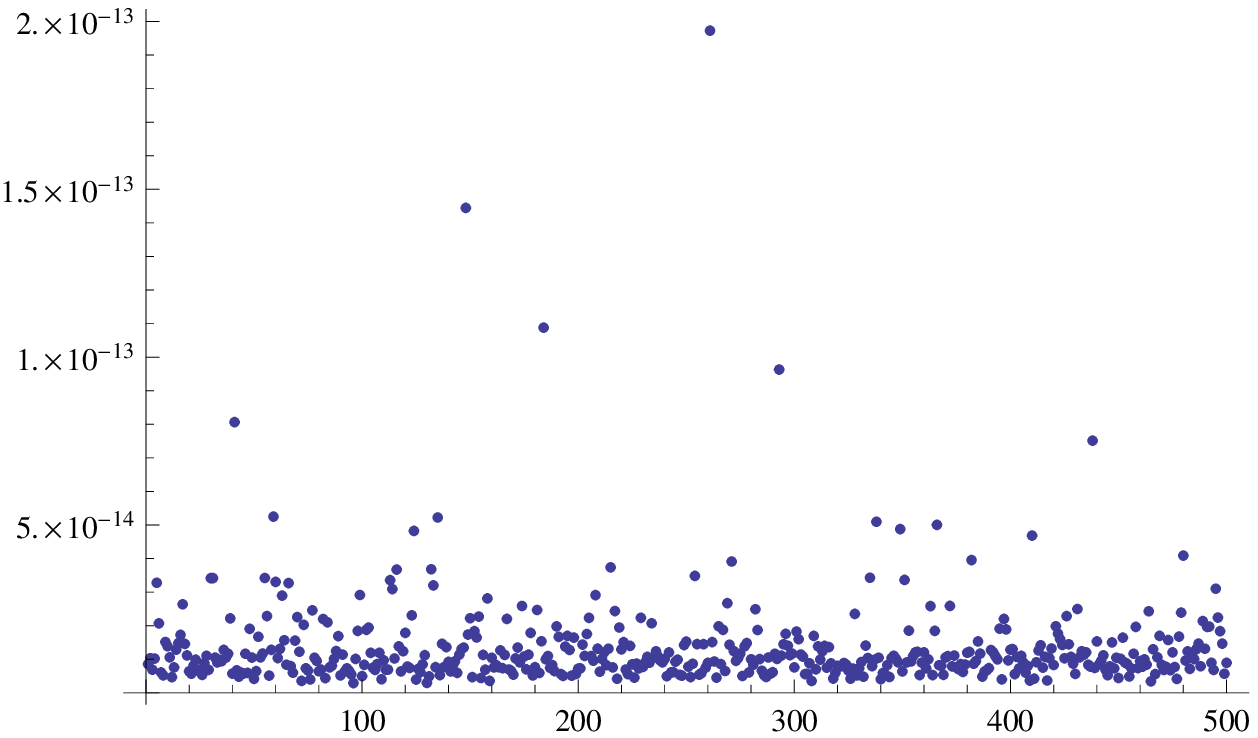}
\par\end{centering}

\begin{centering}
(c)~~ \includegraphics[scale=0.55]{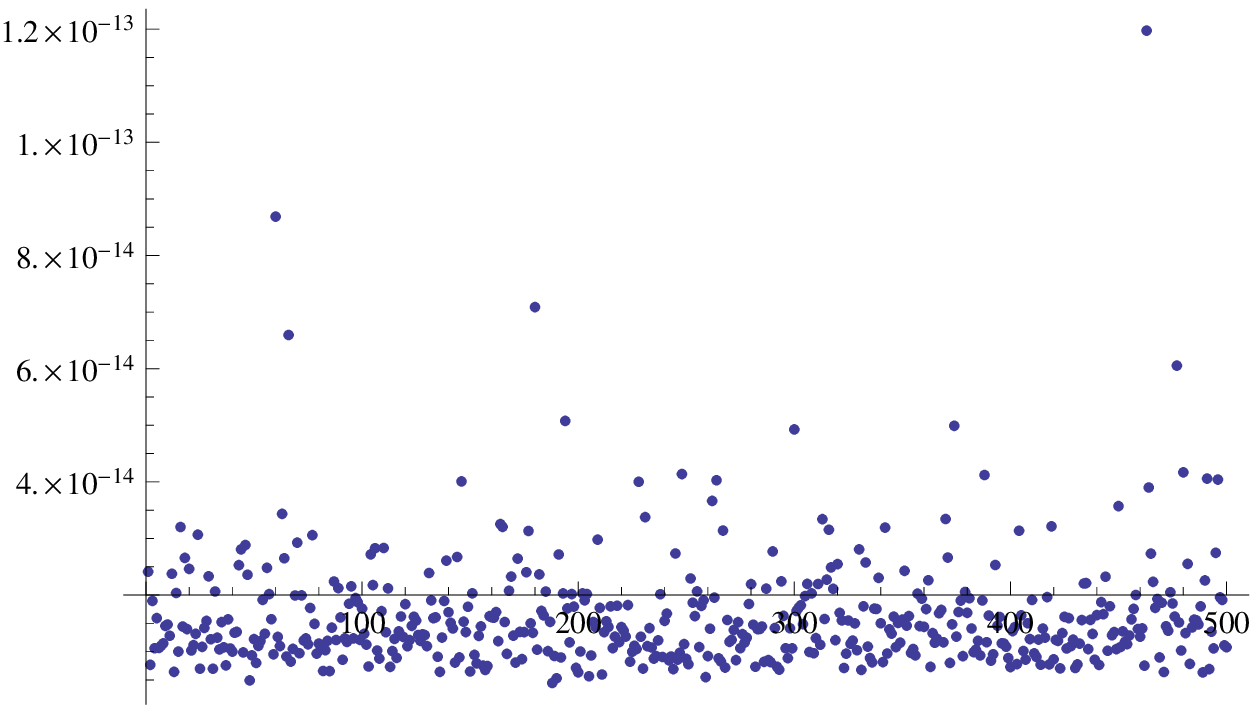}~~~~~~
(d)~~ \includegraphics[scale=0.55]{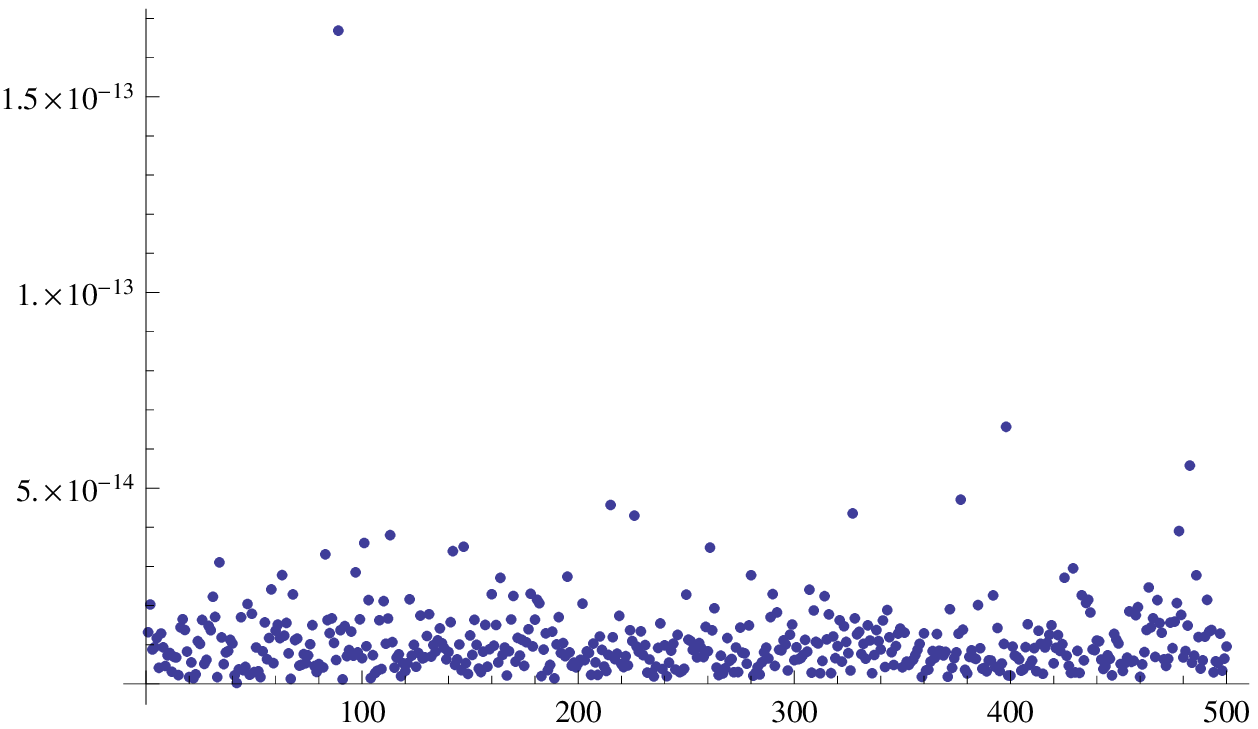}
\par\end{centering}

\caption{\label{Flo:error in con-eig vectors, various indices}Relative error
in the computed con-eigenvectors, $\|\widehat{z_{j}}-z_{j}\|_{2}/\|z_{j}\|_{2}$,
for $j=1,40,80,120$ ((a), (b), (c), and (d), respectively), plotted
for each of the $500$ random Cauchy matrices.}

\end{figure}

\section{Accuracy and perturbation theory\label{sec:Accuracy-and-perturbation}}

We show that Algorithm~\ref{Flo:Accurate-con-eigenvalue-decomposition}
of the previous section achieves high relative accuracy. We also demonstrate
that small perturbations of $a_{i}$, $b_{j}$, $x_{i}$, and $y_{j}$
determining $C$ lead to small relative perturbations of the con-eigenvalues
and small perturbations of the angles between subspaces spanned by
the con-eigenvectors, as long as the parameters $x_{i}$ and $y_{j}$
are not too close in a relative sense. 

For two (complex) floating point numbers $x$ and $y$, we denote
by $\text{fl}\left(x\odot y\right)$ the result of applying the operation
$x\odot y$ in floating point arithmetic, where $\odot$ is one of
the four basic operations, $\odot\in\left\{ +,-,\times,\div\right\} $.
We use that $\text{fl}\left(x\odot y\right)=\left(x\odot y\right)\left(1+\delta\right)$,
where $\left|\delta\right|\leq c\epsilon+\mathcal{O}\left(\epsilon^{2}\right)$,
$\epsilon$ denote the machine round-off, and $c$ is a small constant
(cf. \cite{HIGHAM:2002}). We will also abuse notation by letting
$\text{fl}\left(XY\right)$ denote the result of multiplying matrices
$X$ and $Y$ in floating point arithmetic. Finally, throughout this
paper the notation $\left\Vert \cdot\right\Vert $ always denotes
the Frobenius norm.

In Theorems~\ref{thm:perturbation for con-eig decomp}-\ref{thm:truncation theorem}
below we always assume that the con-eigenvalues are simple, although
this is not a crucial restriction. In the statements and proofs of
these theorems, the implicit constant factor implied by the notation
$\mathcal{O}\left(\eta\right)$ and $\mathcal{O}\left(\epsilon\right)$
(here $\epsilon,\eta\ll1$) depends only on the size $n$ of the matrix
$C$. We also use the notation $\mathcal{O}\left(1\right)$ to denote
a quantity that depends only on the size $n$. We note that all these
implicit constants may be tracked more carefully and are modest-sized
functions of $n$.

The bounds in the theorems below depend on the Cholesky factors in
the decomposition $C=\left(PL\right)D^{2}\left(PL\right)^{*}$. In
particular, let $B=L^{\text{T}}L$, and consider the (symmetric) LU
factorization $B=L_{B}L_{B}^{\text{T}}$, where $L_{B}$ is unit lower
triangular. Then the estimates in Theorems~\ref{thm:perturbation for con-eig decomp}~-~\ref{thm:truncation theorem}
depend on the quantities \begin{eqnarray}
\mu_{0}\left(L_{B}\right) & = & \left\Vert L_{B}^{-1}\right\Vert ^{2}\kappa\left(L_{B}\right),\label{eq:constants for bounds in proof}\\
\mu_{1}\left(L_{B}\right) & = & \max\left\{ \left\Vert L_{B}^{-1}\right\Vert ^{2},\left\Vert L_{B}\right\Vert ^{2}\right\} \kappa\left(L_{B}\right),\nonumber \\
\mu_{2}\left(L_{B}\right) & = & \left\Vert L_{B}^{-1}\right\Vert ^{2}\mu_{1}\left(L_{B}\right)\kappa^{3}\left(L_{B}\right),\nonumber \end{eqnarray}
where the condition number $\kappa\left(L_{B}\right)=\left\Vert L_{B}\right\Vert \left\Vert L_{B}^{-1}\right\Vert $
is not too large if $B$ and its $n$ leading principal submatrices
are well-conditioned. The estimates in Theorems~\ref{thm:high accuracy of algorithm-1}-\ref{thm:truncation theorem}
also depend on \begin{equation}
\mu_{3}\left(L_{B}\right)=\left\Vert L^{-1}\right\Vert \left(\rho\mu\psi\mu_{2}\left(L_{B}\right)+\nu\kappa^{3}\left(L_{B}\right)\right),\label{eq:constants for bounds, 2}\end{equation}
where $\rho$, $\mu$, and $\psi$ are {}``pivot growth'' factors
associated with the QR factorization (see Section~\ref{sub:QR decomposition}),
and the factor $\nu$ is associated with the one-sided Jacobi algorithm
(see (\ref{eq:def of nu})).
\begin{rem*}
There are simple formulas for $L_{ij}$ and $\left(L^{-1}\right)_{ij}$
(\cite{CHO:1968}) in terms of the parameters $a_{i}$, $b_{j}$,
$x_{i}$ and $y_{j}$ defining the Cauchy matrix $C$, and it is possible
that the bounds below may be improved by using this additional structure. \end{rem*}
\begin{thm}
\label{thm:perturbation for con-eig decomp}Suppose that the parameters
defining the positive-definite Cauchy matrix $C=C(a,b,x,y)$ are perturbed
to $\tilde{a}=a+\delta a$, $\tilde{b}=b+\delta b$, $x=x+\delta x$,
and $y=y+\delta y$. Let us define\[
\eta=\left(1/\eta_{1}+1/\eta_{2}+1/\eta_{3}\right)\max\left\{ \left\Vert \delta a\right\Vert _{\infty},\left\Vert \delta b\right\Vert _{\infty},\left\Vert \delta x\right\Vert _{\infty},\left\Vert \delta y\right\Vert _{\infty}\right\} ,\]
where \[
\eta_{1}=\min_{i\neq j}\frac{\left|x_{i}-x_{j}\right|}{\left|x_{j}\right|+\left|x_{i}\right|},\,\,\,\,\eta_{2}=\min_{i\neq j}\frac{\left|y_{i}-y_{j}\right|}{\left|y_{j}\right|+\left|y_{i}\right|},\,\,\,\,\eta_{3}=\min_{i\neq j}\frac{\left|x_{i}+y_{j}\right|}{\left|x_{i}\right|+\left|y_{j}\right|}.\]
Let $C=LDL^{*}$ denote the Cholesky factorization of $C$, and let
$\widetilde{C}=C(\widetilde{a},\widetilde{b},\widetilde{x},\widetilde{y})$
denote the Cauchy matrix corresponding to the perturbed parameters.
Finally, let $z_{i}$, $\widetilde{z}_{i}$ denote the con-eigenvectors
of $C$ and $\widetilde{C}$, corresponding to con-eigenvalues $\lambda_{i}$
and $\widetilde{\lambda_{i}}$ .

Then the relative difference in the con-eigenvalues $\lambda_{i}$
and $\widetilde{\lambda_{i}}$ is bounded as\[
\left|\frac{\lambda_{i}-\widetilde{\lambda_{i}}}{\lambda_{i}}\right|\leq\mu_{0}\left(L_{B}\right)\mathcal{O}\left(\eta\right),\]
and the acute angle between the con-eigenvectors $z_{i}$ and $\widetilde{z}_{i}$
is bounded by \[
\sin\left(\angle z_{i},\widetilde{z_{i}}\right)\leq\kappa\left(L\right)\left(\frac{\mu_{2}\left(L_{B}\right)}{\text{relgap}{}_{i}}+\mu_{0}\left(L_{B}\right)\mu_{1}\left(L_{B}\right)\right)\mathcal{O}\left(\eta\right).\]
Here $\mu_{0}\left(L_{B}\right)$, $\mu_{1}\left(L_{B}\right)$ and
$\mu_{2}\left(L_{B}\right)$ are defined in (\ref{eq:constants for bounds in proof}),
and \[
\mbox{relgap}_{i}=\min_{j\neq i}\frac{\left|\lambda_{i}-\lambda_{j}\right|}{\left|\lambda_{i}\right|+\left|\lambda_{j}\right|}.\]

\end{thm}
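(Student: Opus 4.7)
The plan is to reduce the perturbation problem for the con-eigendecomposition of $C$ to a multiplicative perturbation problem for the small matrix $G = D(L^{\text{T}}L)D$ introduced in Section~\ref{sub:Derivation-of-algorithm}, and then to invoke relative perturbation theory for the singular values and singular vectors of graded matrices.

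First I will show that the hypothesis that $\eta_1,\eta_2,\eta_3$ are bounded below guarantees that the Cauchy Cholesky recursion in Algorithm~\ref{Flo:Cauchy factorization alg-1} is componentwise stable under the perturbations $\delta a,\delta b,\delta x,\delta y$. Because the recursion only forms the differences $x_i-x_j$, $y_i-y_j$ and $x_i+y_j$ together with products and quotients of $a_i$ and $b_j$, each elementary operation suffers relative perturbation $\mathcal O(\eta)$, and the componentwise argument of~\cite{DEMMEL:1999} then produces $\widetilde L = L(I+F_L)$ and $\widetilde D = D(I+F_D)$ with $\|F_L\|,\|F_D\| = \mathcal O(\eta)$. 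Feeding these into $G$ yields a two-sided multiplicative perturbation $\widetilde G = (I+E_1)\,G\,(I+E_2)$ with $\|E_1\|,\|E_2\| = \mathcal O(\eta)$.

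For the con-eigenvalues, since $\lambda_i$ equals the $i$th singular value of $G$, I will apply the multiplicative relative perturbation theorem of~\cite{DEM-VES:1992} via the symmetric factor $B = L_B L_B^{\text{T}}$ to get $|\lambda_i-\widetilde\lambda_i|/\lambda_i \le \|L_B^{-1}\|^2\kappa(L_B)\,\mathcal O(\eta) = \mu_0(L_B)\,\mathcal O(\eta)$. For the con-eigenvectors I will use the explicit representation $\overline{z_i} = \overline{L}\,D\overline{v_i}/\sqrt{\lambda_i}$ from Section~\ref{sub:Derivation-of-algorithm}, where $v_i$ is the $i$th right singular vector of $G$, and split the total perturbation into two pieces: the change in $v_i$, controlled by the relative-gap singular-vector bound of~\cite{DEM-VES:1992} for the multiplicatively perturbed graded matrix $G$ (which contributes $\mu_2(L_B)/\text{relgap}_i$), and the change in the map $v\mapsto\overline{L}D\overline{v}/\sqrt{\lambda_i}$ itself, whose effect on the resulting unit direction is amplified by $\kappa(L)$ (from $\overline{L}$) and by the combined componentwise errors in $D$ and $\lambda_i$ (contributing the $\mu_0(L_B)\mu_1(L_B)$ term). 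Taking the acute-angle sine of the two resulting directions then gives the stated bound.

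The main obstacle will be carrying out the first step honestly: producing genuinely componentwise---not merely norm-wise---perturbation bounds for $\widetilde L$ and $\widetilde D$ purely in terms of $\eta$, and calibrating the subsequent two-sided multiplicative perturbation of $G$ so that the constants appearing downstream coincide precisely with $\mu_0(L_B)$, $\mu_1(L_B)$ and $\mu_2(L_B)$ as defined through $L_B$. Once this componentwise foundation is in place, the invocations of the graded-matrix singular value and singular vector perturbation results are routine, but the bookkeeping that distinguishes $\kappa(L)$ from $\kappa(L_B)$ and tracks which factor contributes to which $\mu_k$ must be done with care.
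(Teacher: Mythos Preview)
Your plan is essentially the paper's proof. The paper reduces to $G=D(L^{\mathrm T}L)D$, uses Demmel's componentwise Cholesky perturbation (packaged as Lemma~\ref{lem:perturbation, factorization of Cauchy}) to write $\widetilde G=D(L^{\mathrm T}L+E)D$ with $\|E\|=\mathcal O(\eta)$, then bounds the singular values and the scaled singular vectors $w_i=Dv_i/\Sigma_{ii}^{1/2}$, and finally pushes through $z_i=Lw_i$, picking up the factor $\kappa(L)$.

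Two small calibrations. First, the paper frames the perturbation of $G$ additively in the middle factor, $\widetilde G=D(B+E)D$, rather than as a two-sided multiplicative perturbation $(I+E_1)G(I+E_2)$; the additive framing is what feeds directly into the graded analysis and produces the constants $\mu_k(L_B)$ defined through the LU factor $L_B$ of $B=L^{\mathrm T}L$. Second, the singular-vector step you attribute to \cite{DEM-VES:1992} is not available there in the form you need: that reference treats one-sided scalings $DB$, whereas here one needs componentwise bounds $|v_i(j)-\widetilde v_i(j)|\le \mu_2(L_B)\min\{D_{jj}/\Sigma_{ii}^{1/2},\Sigma_{ii}^{1/2}/D_{jj}\}\,\mathcal O(\|E\|)$ for the two-sided complex-symmetric case $DBD$. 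The paper isolates and proves this as Proposition~\ref{pro:perturb for sing vecs of DBD} in the Appendix, combining techniques from \cite{BAR-DEM:1988,DEM-VES:1992,MATHIA:1997}; that proposition, not a direct citation, is the hinge on which both the $\mu_0(L_B)$ eigenvalue bound and the $\mu_2(L_B)/\mathrm{relgap}_i$ vector bound rest.
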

Next we state
\begin{thm}
\label{thm:high accuracy of algorithm-1}Suppose that Algorithm~\ref{Flo:Accurate-con-eigenvalue-decomposition}
is used to compute the full con-eigenvalue decomposition of a positive-definite
Cauchy matrix $C$. Suppose also that $C$ has the Cholesky factorization
$C=\left(PL\right)D^{2}\left(PL\right)^{*}$, where $P$ is the permutation
matrix that encodes complete pivoting. 

Then the relative error between the computed con-eigenvalue $\widehat{\lambda_{i}}$
and the exact $\lambda_{i}$ is bounded as \[
\frac{\left|\widehat{\lambda_{i}}-\lambda_{i}\right|}{\left|\lambda_{i}\right|}\leq\left(\rho\mu\psi\mu_{0}\left(L_{B}\right)+\nu\right)\mathcal{O}\left(\epsilon\right),\]
where $\rho$, $\mu$, and $\psi$ are {}``pivot growth'' factors
associated with the QR factorization (see Section~\ref{sub:QR decomposition}),
and the factor $\nu$ is associated with the one-sided Jacobi algorithm
(see (\ref{eq:def of nu})).

Letting $z_{i}$, $\widehat{z_{i}}$ denote exact and computed con-eigenvectors
of $C$, the acute angle between $z_{i}$ and $\widehat{z_{i}}$ then
satisfies \[
\sin\left(\angle\widehat{z_{i}},z_{i}\right)\leq\kappa\left(L\right)\left(\frac{\mu_{3}\left(L_{B}\right)}{\text{relgap}{}_{i}}+\left\Vert L^{-1}\right\Vert ^{2}\kappa^{3}\left(L_{B}\right)\right)\mathcal{O}\left(\epsilon\right),\]
where $\mbox{relgap}_{i}$ is defined as in Theorem~\ref{thm:perturbation for con-eig decomp}
and $\mu_{3}\left(L_{B}\right)$ is defined in \ref{eq:constants for bounds, 2}.
\end{thm}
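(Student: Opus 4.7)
The plan is to split the analysis into two stages that mirror Algorithm~\ref{Flo:Accurate-con-eigenvalue-decomposition}: (i) the Cholesky factorization $C=(PL)D^2(PL)^*$, and (ii) the call to $\text{ConEig\_RRD}(PL,D)$. Since this theorem concerns the \emph{full} decomposition, I take the cutoff $\delta$ small enough in Algorithm~\ref{Flo:Cauchy factorization alg-1} that no truncation occurs, and defer truncation effects to Theorem~\ref{thm:truncation theorem}. First I would invoke the analysis of Section~\ref{sub:Accurate-factorization-of} (based on \cite{DEMMEL:1999}), with the modification from Remark following Algorithm~\ref{Flo:con-eig comp of XDX^*}, to conclude that the computed factors $\widehat{L},\widehat{D}$ satisfy $\widehat{L}\widehat{D}^2\widehat{L}^*=\widetilde{C}$ where $\widetilde{C}=C(\widetilde a,\widetilde b,\widetilde x,\widetilde y)$ differs from $C$ by relative perturbations of size $\mathcal{O}(\epsilon)$ in each of its defining parameters. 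Theorem~\ref{thm:perturbation for con-eig decomp} then contributes $\mu_0(L_B)\mathcal{O}(\epsilon)$ to the con-eigenvalue bound and the term $\kappa(L)(\mu_2(L_B)/\text{relgap}_i+\mu_0(L_B)\mu_1(L_B))\mathcal{O}(\epsilon)$ to the con-eigenvector bound; these appear (in relaxed form, absorbed into the stated $\mu_3$) in the conclusion.

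Next I would analyze stage (ii), tracking floating-point errors through the four steps of Algorithm~\ref{Flo:con-eig comp of XDX^*} applied to \emph{exact} factors $X=PL$ and $D$. In Step~1, forming $\widehat G=\mathrm{fl}(D(X^{\mathrm T}X)D)$ produces $\widehat G=G+\delta G_1$ with $\|D^{-1}\delta G_1 D^{-1}\|=\mathcal{O}(\epsilon)\|L^{\mathrm T}L\|$. In Step~2, using the backward-stability results for Householder QR cited in Section~\ref{sub:QR decomposition}, the computed triangular factor satisfies $\widehat R=R+\delta R$ with $\|D^{-2}\delta R\|\leq\rho\mu\psi\,\mathcal{O}(\epsilon)$, so writing $R=D^2R_0$ via Lemma~\ref{lem:R_0, for main alg proof} gives $\widehat R_1:=D^{-1}\widehat RD^{-1}=DR_0D^{-1}+E_1$ with $\|E_1\|\leq\rho\mu\psi\,\mathcal{O}(\epsilon)$ and $\kappa(DR_0D^{-1})\leq\mathcal{O}(\kappa^3(L_B))$. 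In Step~3 I apply Theorem~\ref{thm:Jacobi bound one-sided scaling} to the one-sided Jacobi SVD of $\widehat R$, obtaining
\[
\frac{|\widehat\Sigma_{ii}-\Sigma_{ii}|}{\Sigma_{ii}}\leq\bigl(\rho\mu\psi\,\mu_0(L_B)+\nu\bigr)\mathcal{O}(\epsilon),\qquad \bigl|u_i(j)-\widehat u_i(j)\bigr|\leq\min\!\Bigl\{\tfrac{D_{jj}}{\sqrt{\Sigma_{ii}}},\tfrac{\sqrt{\Sigma_{ii}}}{D_{jj}}\Bigr\}\frac{\mu_2(L_B)\,\mathcal{O}(\epsilon)}{\text{relgap}_i}.
\]
The singular-value inequality, combined with the stage-(i) perturbation, already gives the first assertion of the theorem since $\lambda_i=\Sigma_{ii}$.

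For the con-eigenvector bound, Step~4 forms $\widehat y_i$ by triangular-solving $\widehat R_1\widehat y_i=\widehat D^{-1}\widehat u_i\widehat\Sigma_{ii}^{1/2}$. The scaled right-hand side is computed componentwise accurately by the Jacobi bound above, and the backward stability of the triangular solve produces an error amplified by $\kappa(\widehat R_1)\leq\mathcal{O}(\kappa^3(L_B))$. Exploiting the graded structure $\widehat R_1\approx DR_0D^{-1}$ as in \cite{DRM-VES:2007a} and collecting terms yields
\[
\bigl\|\widehat y_i-y_i\bigr\|\leq\Bigl(\frac{\rho\mu\psi\,\mu_2(L_B)}{\text{relgap}_i}+\nu\kappa^3(L_B)\Bigr)\mathcal{O}(\epsilon),
\]
which, up to the outer $\|L^{-1}\|$ factor, is exactly the expression inside $\mu_3(L_B)$. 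Step~5 forms $\overline{z_i}=\overline{X\,\widehat y_i}=\overline{PL\,\widehat y_i}$; the floating-point multiplication and the preceding error in $\widehat y_i$ amplify by $\kappa(L)$ once we normalize the vector, and the overall prefactor for the relgap-independent piece becomes $\|L^{-1}\|^2\kappa^3(L_B)$ after accounting for the normalization against $\|L\|$. Combining with stage~(i) via the triangle inequality gives the stated bound $\sin\angle(\widehat z_i,z_i)\leq\kappa(L)(\mu_3(L_B)/\text{relgap}_i+\|L^{-1}\|^2\kappa^3(L_B))\mathcal{O}(\epsilon)$.

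The main obstacle is Step~4: I must confirm that the componentwise Jacobi bound for $\widehat u_i(j)$ combines with the graded conditioning of $\widehat R_1$ so that the $R_1^{-1}$-solve does not introduce any factor larger than $\kappa^3(L_B)$ outside the $\mathrm{relgap}_i$ denominator, and that the subsequent multiplication by $\overline X$ introduces no more than $\kappa(L)$. This matching of the componentwise scalings $\min\{D_{jj}/\sqrt{\Sigma_{ii}},\sqrt{\Sigma_{ii}}/D_{jj}\}$ with the graded perturbation theory of \cite{DEM-VES:1992} is the delicate bookkeeping underlying the theorem; once it is carried out, assembling it with stage~(i) and Theorem~\ref{thm:perturbation for con-eig decomp} yields the stated bounds.
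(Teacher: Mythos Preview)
Your overall architecture matches the paper's: track floating-point errors through Steps~1--5 of Algorithm~\ref{Flo:con-eig comp of XDX^*} using Theorem~\ref{thm:backward error for QR}, Lemma~\ref{lem:R_0, for main alg proof}, Theorem~\ref{thm:Jacobi bound one-sided scaling}, and Proposition~\ref{pro:perturb for sing vecs of DBD}, and assemble the bounds. The Step~4 bookkeeping you flag is indeed the crux, and your sketch of it is essentially what the paper carries out.

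There is, however, a genuine misstep in your stage~(i). You assert that the computed Cholesky factors satisfy $\widehat{L}\widehat{D}^2\widehat{L}^*=\widetilde{C}$ for some perturbed \emph{Cauchy} matrix $\widetilde{C}=C(\widetilde a,\widetilde b,\widetilde x,\widetilde y)$, and then invoke Theorem~\ref{thm:perturbation for con-eig decomp}. But Demmel's analysis in Section~\ref{sub:Accurate-factorization-of} (see~(\ref{eq:bound on computed Cholesky Cauchy})) gives only componentwise \emph{forward} error bounds $|\widehat{L}_{ij}-L_{ij}|\le|L_{ij}|\mathcal{O}(\epsilon)$ and $|\widehat{D}_{ii}-D_{ii}|\le|D_{ii}|\mathcal{O}(\epsilon)$; it does not furnish a structured backward error, and $\widehat{L}\widehat{D}^2\widehat{L}^*$ is in general not a Cauchy matrix at all. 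So Theorem~\ref{thm:perturbation for con-eig decomp} is not applicable here, and the paper does not use it in this proof. Instead, the paper uses the componentwise bounds directly to write
\[
\widehat{G}=\mathrm{fl}\bigl(\widehat{D}\,\widehat{L}^{\mathrm T}\widehat{L}\,\widehat{D}\bigr)=D\bigl(L^{\mathrm T}L+E_0\bigr)D,\qquad \|E_0\|=\mathcal{O}(\epsilon),
\]
folding the Cholesky error into the same perturbation $E_0$ that absorbs the Step~1 rounding. From that point everything is handled by Proposition~\ref{pro:perturb for sing vecs of DBD}, and your two-stage split becomes unnecessary.

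Once you make this correction, your stage~(ii) analysis lines up with the paper's, modulo some constant bookkeeping: the Jacobi left-singular-vector error carries the factor $\nu$ from~(\ref{eq:def of nu}), not $\mu_2(L_B)$; the $\mu_2(L_B)$ term enters separately when Proposition~\ref{pro:perturb for sing vecs of DBD} is used to compare the exact right singular vector of $\widehat R$ with that of $G$; and in the final assembly both the $\rho\mu\psi\,\mu_2(L_B)$ and the $\nu\kappa^3(L_B)$ contributions sit inside $\mu_3(L_B)$ and are divided by $\mathrm{relgap}_i$, with the $\mathrm{relgap}$-free remainder being $\|L_B^{-1}\|^2\kappa^3(L_B)$.
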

~~~
\begin{thm}
\label{thm:truncation theorem}Suppose Algorithm~\ref{Flo:Accurate-con-eigenvalue-decomposition}
is used to compute $m$ approximate con-eigenvalues and con-eigenvectors
of a positive-definite Cauchy matrix $C$. Suppose also that $C$
has the Cholesky factorization $C=\left(PL\right)D^{2}\left(PL\right)^{*}$,
where $P$ is the permutation matrix that encodes complete pivoting.
Assuming that $D_{mm}^{2}\leq\lambda_{i}\epsilon$ for some $1\leq i\leq m$,
the following error bound holds for the computed con-eigenvalue $\widehat{\lambda_{i}}$,
\[
\frac{\left|\widehat{\lambda_{i}}-\lambda_{i}\right|}{\left|\lambda_{i}\right|}\leq\left(\rho\mu\psi\mu_{0}\left(L_{B}\right)+\nu+\left\Vert C\right\Vert \mu_{1}^{2}\left(L_{B}\right)\right)\mathcal{O}\left(\epsilon\right),\]
 and the acute angle between $z_{i}$ and $\widehat{z_{i}}$ is bounded
by \[
\sin\left(\angle\widehat{z_{i}},z_{i}\right)\leq\kappa\left(L\right)\left(\frac{\mu_{3}\left(L_{B}\right)+\left\Vert C\right\Vert \mu_{1}^{2}\left(L_{B}\right)}{\text{relgap}{}_{i}}+\left\Vert L^{-1}\right\Vert ^{2}\kappa^{3}\left(L_{B}\right)\right)\mathcal{O}\left(\epsilon\right).\]
In the above estimates, $\rho$, $\mu$, and $\psi$ are {}``pivot
growth'' factors associated with the QR factorization (see Section~\ref{sub:QR decomposition}),
and the factor $\nu$ is associated with the one-sided Jacobi algorithm
(see (\ref{eq:def of nu})).\end{thm}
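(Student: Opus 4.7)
The plan is to reduce to Theorem~\ref{thm:high accuracy of algorithm-1} by writing $C=\widetilde{C}+E$, where $\widetilde{C}=(\widetilde{P}\widetilde{L})\widetilde{D}^{2}(\widetilde{P}\widetilde{L})^{*}$ is the partial Cholesky output of Algorithm~\ref{Flo:Cauchy factorization alg-1} and $E$ is the remainder from the Cholesky columns dropped beyond index $m$. Denote by $\widetilde{\lambda_{i}}$ and $\widetilde{z_{i}}$ the exact con-eigenvalues and con-eigenvectors of $\widetilde{C}$. I would proceed in three steps: (i) estimate $\|E\|$ in terms of $D_{mm}^{2}$, (ii) apply Theorem~\ref{thm:high accuracy of algorithm-1} to $\widetilde{C}$ to control the computed versus exact con-eigen-objects of $\widetilde{C}$, and (iii) couple $\widetilde{C}$ back to $C$ using Weyl's inequality for singular values and a Davis--Kahan $\sin\Theta$ theorem. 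The final bound then follows from the triangle inequality on relative errors and on acute angles.

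In step~(i), complete pivoting in Algorithm~\ref{Flo:algorithm for pivot order-1} arranges the diagonal of $D$ in non-increasing order, so $E=\sum_{k>m}D_{kk}^{2}\,l_{k}l_{k}^{*}$ with $l_{k}=L(:,k)$ and $D_{kk}\leq D_{mm}$, giving $\|E\|\leq n\|L\|^{2}D_{mm}^{2}$. Writing $B=L^{\text{T}}L=L_{B}L_{B}^{\text{T}}$, we have $\|L\|^{2}=\|L_{B}\|^{2}\leq\mu_{1}(L_{B})$, and the hypothesis $D_{mm}^{2}\leq\lambda_{i}\epsilon\leq\|C\|\epsilon$ then yields $\|E\|\leq\|C\|\mu_{1}^{2}(L_{B})\mathcal{O}(\epsilon)$, which is precisely the new term appearing in Theorem~\ref{thm:truncation theorem} beyond those already present in Theorem~\ref{thm:high accuracy of algorithm-1}.

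Step~(ii) applies Theorem~\ref{thm:high accuracy of algorithm-1} to $\widetilde{C}$ in place of $C$. This is legitimate because $\widetilde{C}$ possesses a genuine (non-truncated) Cholesky factorization of size $m$, so the analysis of Theorem~\ref{thm:high accuracy of algorithm-1} applies with $X=\widetilde{P}\widetilde{L}$ and $D=\widetilde{D}$. The principal subtlety, which I regard as the main obstacle, is that the pivot-growth factors $\rho,\mu,\psi,\nu$ and the quantities $\mu_{0}(L_{B})$, $\mu_{3}(L_{B})$, $\kappa(L_{B})$ that appear naturally for $\widetilde{C}$ must be dominated by the corresponding constants attached to the full matrix $C$. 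Since $\widetilde{L}$ coincides with the $n\times m$ leading block of $L$ after a common row permutation, $\widetilde{L}^{\text{T}}\widetilde{L}$ is a principal submatrix of $B$ and its Cholesky factor is a principal submatrix of $L_{B}$; its conditioning and pivot-growth bounds must be shown to inherit from those of the full $L_{B}$. Once this is verified, Theorem~\ref{thm:high accuracy of algorithm-1} provides
\[
\frac{|\widehat{\lambda_{i}}-\widetilde{\lambda_{i}}|}{|\widetilde{\lambda_{i}}|}\leq(\rho\mu\psi\mu_{0}(L_{B})+\nu)\mathcal{O}(\epsilon),
\]
together with the analogous bound on $\sin(\angle\widehat{z_{i}},\widetilde{z_{i}})$ featured in Theorem~\ref{thm:high accuracy of algorithm-1}.

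For step~(iii), Weyl's inequality applied to the singular values of the symmetric matrix $C$ yields $|\widetilde{\lambda_{i}}-\lambda_{i}|\leq\|E\|\leq\|C\|\mu_{1}^{2}(L_{B})\mathcal{O}(\epsilon)$; combined with step~(ii) and the triangle inequality, this gives the stated relative-error bound on $\widehat{\lambda_{i}}$. For con-eigenvectors I would invoke the Davis--Kahan $\sin\Theta$ theorem for the eigenvectors of $\overline{C}C$ and $\overline{\widetilde{C}}\widetilde{C}$, using $\|\overline{C}C-\overline{\widetilde{C}}\widetilde{C}\|\leq 2\|C\|\|E\|+\|E\|^{2}$ and the fact that (up to lower-order corrections) the absolute eigenvalue gap of $\overline{C}C$ at $\lambda_{i}^{2}$ is bounded below by $\text{relgap}_{i}\cdot\lambda_{i}\cdot\|C\|$; this converts the $\|E\|$-sized perturbation into the additional $\|C\|\mu_{1}^{2}(L_{B})/\text{relgap}_{i}$ contribution to the sine-of-angle bound. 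Triangle inequality on sines of acute angles then closes the argument.
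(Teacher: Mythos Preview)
Your three-step strategy is exactly the one the paper uses: bound the floating-point error on $\widetilde{C}$ via Theorem~\ref{thm:high accuracy of algorithm-1}, bound the truncation error $C\to\widetilde{C}$ separately, and combine by the triangle inequality. Steps~(i) and~(ii) are essentially correct and coincide with the paper, including the observation that the constants attached to $\widetilde{C}$ inherit from those of $C$ because $\widetilde{L}^{\text{T}}\widetilde{L}$ is a leading principal submatrix of $B=L^{\text{T}}L$. (The identity $\|L\|^{2}=\|L_{B}\|^{2}$ is false in general, but harmless here since complete pivoting gives $\|L\|=\mathcal{O}(1)$; the paper in fact bounds $\|C-\widetilde{C}\|\leq D_{mm}^{2}\,\mathcal{O}(1)$ directly from the Schur-complement block, without any $\|L\|$ factor.)

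Step~(iii) has a genuine gap. The matrix $\overline{C}C$ is \emph{not} Hermitian (it is a product of two distinct Hermitian matrices), so the Davis--Kahan $\sin\Theta$ theorem does not apply as stated; and the con-eigenvalues of $C$ are \emph{not} the singular values of $C$ (they are square roots of the eigenvalues of $\overline{C}C$), so Weyl's inequality on the singular values of $C$ is the wrong tool. The paper repairs this with two dedicated lemmas. Lemma~\ref{lem:standard perturbation eigenvector} is the eigenvector perturbation bound for \emph{diagonalizable} matrices, which plays the role of Davis--Kahan but carries an extra factor $\kappa(Z_{0})$, the condition number of the matrix of unit eigenvectors of $\overline{C}C$. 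The crucial ingredient you are missing is Lemma~\ref{lem:bound of eigenvector matrix}, which shows $\kappa(Z_{0})\leq\kappa(L)\,\mu_{1}^{2}(L_{B})\,\mathcal{O}(1)$ by writing $Z=P\overline{L}\bigl(D\overline{V}\Sigma^{-1/2}\bigr)$ and invoking the grading estimates of Proposition~\ref{pro:perturb for sing vecs of DBD}. It is this lemma, not your bound on $\|E\|$, that supplies the factor $\|C\|\,\mu_{1}^{2}(L_{B})$ in the final estimate: combining $\|\overline{C}C-\overline{\widetilde{C}}\widetilde{C}\|\leq\|C\|D_{mm}^{2}\,\mathcal{O}(1)$, the diagonalizable perturbation bound, the $\kappa(Z_{0})$ estimate, and $D_{mm}^{2}/\lambda_{i}^{2}\leq\epsilon$ yields Proposition~\ref{pro:perturbation from partial cholesky}, from which Theorem~\ref{thm:truncation theorem} follows immediately.
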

\begin{rem}
We note that the constants in the theorems above are significantly
more pessimistic than we actually observe in numerical experiments.
Indeed, while the bounds on the con-eigenvectors depend only on the
well-conditioned matrices $\kappa\left(L_{B}\right)$ and $k\left(L\right)$
(and, in particular, are independent of the exponentially decaying
diagonal matrix $D$), they still scale like $\kappa^{9}$, where
$\kappa=\max\left\{ \kappa\left(L_{B}\right),\kappa\left(L\right)\right\} $;
the bounds on the con-eigenvalues are better---they scale like $\kappa^{3}$.
However, in practice Algorithm~\ref{Flo:Accurate-con-eigenvalue-decomposition}
achieves nearly full precision for all the con-eigenvalues and con-eigenvectors.
While it is likely that better estimates can be obtained, those presented
here elucidate the basic mechanism behind the high accuracy that we
observe in our experiments.
\end{rem}

\subsection{Perturbation theorem\label{sub:Perturbation-theorem}}

In this section, we prove Theorem~\ref{thm:perturbation for con-eig decomp}.
We start by formulating several preliminary results. Lemma~\ref{lem:perturbation, factorization of Cauchy}
describes how perturbations of the vectors $a$, $b$, $x$, and $y$
defining the Cauchy matrix $C=C(a,b,x,y)$ change the factors $L$
and $D$ in the Cholesky decomposition $C=LDL^{*}$ (see \cite{DEMMEL:1999}
for a proof). %
{}
\begin{lem}
\label{lem:perturbation, factorization of Cauchy}Suppose the data
defining the Cauchy matrix $C=C(a,b,x,y)$ is perturbed to $\tilde{a}=a+\delta a$,
$\tilde{b}=b+\delta b$, $x=x+\delta x$, and $y=y+\delta y$. Let
us define\[
\eta=\left(1/\eta_{1}+1/\eta_{2}+1/\eta_{3}\right)\max\left\{ \left\Vert \delta a\right\Vert _{\infty},\left\Vert \delta b\right\Vert _{\infty},\left\Vert \delta x\right\Vert _{\infty},\left\Vert \delta y\right\Vert _{\infty}\right\} ,\]
where \[
\eta_{1}=\min_{i\neq j}\frac{\left|x_{i}-x_{j}\right|}{\left|x_{j}\right|+\left|x_{i}\right|},\,\,\,\,\eta_{2}=\min_{i\neq j}\frac{\left|y_{i}-y_{j}\right|}{\left|y_{j}\right|+\left|y_{i}\right|},\,\,\,\,\eta_{3}=\min_{i\neq j}\frac{\left|x_{i}+y_{j}\right|}{\left|x_{i}\right|+\left|y_{j}\right|}.\]
Then $C=C(a,b,x,y)$ and $\widetilde{C}=\widetilde{C}(\widetilde{a},\widetilde{b},\widetilde{x},\widetilde{y})$
have Cholesky factorizations $C=LDL^{*}$ and $\widetilde{C}=\widetilde{L}\widetilde{D}\widetilde{L}^{*}$,
where $L$, $\widetilde{L}$ are unit lower triangular matrices, $D$,
$\widetilde{D}$ are diagonal matrices with positive entries, and
\[
\left|L_{ij}-\widehat{L}_{ij}\right|=\left|L_{ij}\right|\mathcal{O}\left(\eta\right),\,\,\,\,\left|D_{ii}-\widehat{D_{ii}}\right|=\left|D_{ii}\right|\mathcal{O}\left(\eta\right).\]

\end{lem}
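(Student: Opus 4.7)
The plan is to exploit the fact that, for a positive-definite Cauchy matrix, the Cholesky factors admit explicit product formulas in terms of the defining parameters. Specifically, $L_{ij}$ and $D_{ii}$ are expressible as products and quotients of the elementary quantities $a_i$, $b_j$, $x_i - x_k$, $y_i - y_k$, and $x_i + y_k$ that arise from the Schur-complement recursion driving Algorithm~\ref{Flo:Cauchy factorization alg-1} (see also \cite{DEMMEL:1999}). Since multiplication and division preserve relative error, it suffices to show that each of these elementary quantities suffers only an $\mathcal{O}(\eta)$ relative perturbation; the product structure then propagates this bound to $L_{ij}$ and $D_{ii}$ with no catastrophic cancellation.

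The first step is to bound the relative perturbation of each elementary difference or sum. For a pairwise difference in $x$ one writes
\[
\tilde x_i - \tilde x_k = (x_i - x_k) + (\delta x_i - \delta x_k) = (x_i - x_k)(1 + \epsilon^x_{ik}),
\]
so that $|\epsilon^x_{ik}| \leq (|x_i|+|x_k|)\|\delta x\|_\infty/|x_i - x_k| \leq \|\delta x\|_\infty/\eta_1 = \mathcal{O}(\eta)$. Analogous identities yield $\tilde y_i - \tilde y_k = (y_i - y_k)(1+\epsilon^y_{ik})$ with $|\epsilon^y_{ik}| = \mathcal{O}(\eta)$ from the definition of $\eta_2$, and $\tilde x_i + \tilde y_k = (x_i + y_k)(1 + \epsilon^{xy}_{ik})$ with $|\epsilon^{xy}_{ik}| = \mathcal{O}(\eta)$ from the definition of $\eta_3$. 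The scalar factors $\tilde a_i = a_i(1+\delta a_i/a_i)$ and $\tilde b_j = b_j(1+\delta b_j/b_j)$ likewise contribute $\mathcal{O}(\eta)$ relative errors.

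The second step is to unwind the Schur-complement recursion. With $\alpha^{(0)}_i = a_i$ and $\beta^{(0)}_i = b_i$, one iterates
\[
\alpha^{(k)}_i = \frac{x_i - x_k}{x_i + y_k}\,\alpha^{(k-1)}_i, \qquad \beta^{(k)}_i = \frac{y_i - y_k}{y_i + x_k}\,\beta^{(k-1)}_i,
\]
so that $\alpha^{(k)}_i$ is a product of $a_i$ with $k$ quotients of the elementary quantities controlled in step one, and similarly for $\beta^{(k)}_i$. By step one each such factor acquires the multiplicative form $(1+\epsilon)$ with $|\epsilon| = \mathcal{O}(\eta)$, so composing $\mathcal{O}(n)$ of them still yields $\tilde\alpha^{(k)}_i = \alpha^{(k)}_i(1+\mathcal{O}(\eta))$ for $\eta \ll 1$, and likewise for $\beta^{(k)}_i$. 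The relations $D_{ii}^2 = \alpha^{(i-1)}_i \beta^{(i-1)}_i/(x_i+y_i)$ and $L_{ji}\,D_{ii}^2 = \alpha^{(i-1)}_j \beta^{(i-1)}_i/(x_j+y_i)$ then give the claimed bounds $|\tilde L_{ij} - L_{ij}| = |L_{ij}|\mathcal{O}(\eta)$ and $|\tilde D_{ii} - D_{ii}| = |D_{ii}|\mathcal{O}(\eta)$ (a square root is taken in the latter case, which preserves relative accuracy for positive quantities).

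The only real obstacle is the bookkeeping required to verify that the constants implicit in $\mathcal{O}(\eta)$ depend only on $n$. This is automatic: each of the at most $\mathcal{O}(n)$ multiplicative factors is of the form $(1+\mathcal{O}(\eta))$, and $(1+\mathcal{O}(\eta))^{\mathcal{O}(n)} = 1 + \mathcal{O}(n\eta)$ for $n\eta \ll 1$. The crucial point is that no subtraction of close numbers ever occurs in this representation of the Cholesky factors in terms of the elementary perturbed quantities, which is precisely why relative accuracy is preserved in the Cauchy setting; this is the same mechanism exploited in \cite{DEMMEL:1999}.
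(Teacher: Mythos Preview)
Your proposal is correct and follows precisely the argument the paper has in mind: the paper does not spell out a proof of this lemma but simply refers the reader to \cite{DEMMEL:1999}, and a commented-out sketch in the paper's source carries out exactly the same two steps you describe---first bounding the relative perturbation of each elementary factor $x_i-x_k$, $y_i-y_k$, $x_i+y_k$ via the definitions of $\eta_1,\eta_2,\eta_3$, and then propagating these through the multiplicative Schur-complement recursions (\ref{eq:recursion for schur complement})--(\ref{eq:recursion for alpha's, beta's}) for $\alpha_i^{(k)}$ and $\beta_i^{(k)}$. One small point of care: for your inequality $|\epsilon^x_{ik}|\le(|x_i|+|x_k|)\|\delta x\|_\infty/|x_i-x_k|$ to hold as written, $\|\delta x\|_\infty$ must be read as a \emph{relative} perturbation bound (so that $|\delta x_i|\le|x_i|\,\|\delta x\|_\infty$), which is indeed how the commented-out sketch interprets it; this is consistent with the shape of the definition of $\eta$.
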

We now state the main result needed to prove Theorem~\ref{thm:perturbation for con-eig decomp}.
Proposition~\ref{pro:perturb for sing vecs of DBD} considers how
a perturbation $B+\delta B$ in $DBD$ affects the singular vectors.
It turns out that, if all the principal minors of $B$ are well-conditioned,
then the errors in the perturbed singular vectors are graded, \[
\left|v_{i}(j)-\widetilde{v}_{i}(j)\right|\sim\min\left\{ \frac{D_{jj}}{\sqrt{\Sigma_{ii}}},\frac{\sqrt{\Sigma_{ii}}}{D_{jj}}\right\} \mathcal{O}\left(\left\Vert \delta B\right\Vert \right).\]
The proof uses techniques developed in \cite{BAR-DEM:1988}, \cite{DEM-VES:1992},
and \cite{MATHIA:1997}.
\begin{prop}
\label{pro:perturb for sing vecs of DBD}Suppose that $G=DBD$ and
$\widetilde{G}=D\left(B+\delta B\right)D$, where $B$ is complex
symmetric ($B=B^{\text{T}}$) and non-singular, and $D$ is a diagonal
matrix with positive, decreasing diagonal elements. Assume also that
$B$ has the LU decomposition $B=L_{B}L_{B}^{\text{T}}$.

Then the $i$th singular values $\Sigma_{ii}$ and $\widetilde{\Sigma_{ii}}$
of $G$ and $\widetilde{G}$ satisfy \[
\left|\frac{\Sigma_{ii}-\widetilde{\Sigma_{ii}}}{\Sigma_{ii}}\right|\leq\mu_{0}\left(L_{B}\right)\mathcal{O}\left(\left\Vert \delta B\right\Vert \right),\]
where $\mu_{0}\left(L_{B}\right)$ is defined in \ref{eq:constants for bounds in proof}.
The $i$th (left or right) singular vectors $u_{i}$ and $\widetilde{u}_{i}$
of $G$ and $\widetilde{G}$, corresponding to (simple) singular values
$\Sigma_{ii}$ and $\widetilde{\Sigma_{ii}}$, may be chosen so that\[
\left|u_{i}(j)\right|\le\mu_{1}\left(L_{B}\right)\min\left\{ \frac{D_{jj}}{\Sigma_{ii}^{1/2}},\frac{\Sigma_{ii}^{1/2}}{D_{jj}}\right\} ,\]
and\[
\left|u_{i}(j)-\widetilde{u}_{i}(j)\right|\le\mu_{2}\left(L_{B}\right)\min\left\{ \frac{D_{jj}}{\Sigma_{ii}^{1/2}},\frac{\Sigma_{ii}^{1/2}}{D_{jj}}\right\} \mathcal{O}\left(\left\Vert \delta B\right\Vert \right),\]
where $\mu_{1}\left(L_{B}\right)$ and $\mu_{2}\left(L_{B}\right)$
are defined in \ref{eq:constants for bounds in proof}. Finally, we
have the following norm-wise bounds,\begin{equation}
\left\Vert B\right\Vert ^{1/2}\le\left\Vert \frac{Du_{i}}{\Sigma_{ii}^{1/2}}\right\Vert ,\,\,\,\left\Vert B^{-1}\right\Vert ^{-1/2}\le\left\Vert D^{-1}u_{i}\Sigma_{ii}^{1/2}\right\Vert ,\label{eq:bound for sing val ratio}\end{equation}
where $\mu_{0}\left(L_{B}\right)$ is defined in \ref{eq:constants for bounds in proof}.\end{prop}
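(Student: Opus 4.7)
The plan is to follow the strategy of Demmel--Veselic and Mathias, first reducing the two-sided perturbation $\delta G = D\delta B D$ to a one-sided multiplicative perturbation sitting between two copies of the well-conditioned factor $L_{B}$. Using $B = L_{B} L_{B}^{T}$, I would write $B+\delta B = L_{B}(I+E)L_{B}^{T}$ with $E = L_{B}^{-1}\delta B\, L_{B}^{-T}$, so $\|E\| \le \|L_{B}^{-1}\|^{2}\|\delta B\| = \mathcal{O}(\|\delta B\|)$, and $E$ is complex symmetric since $\delta B$ is. Setting $M = DL_{B}$ this gives $G = MM^{T}$ and $\widetilde G = M(I+E)M^{T}$, concentrating the entire perturbation into a well-controlled middle factor.

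For the singular-value bound, since $G$ and $\widetilde G$ are complex symmetric their singular values are square roots of eigenvalues of $G^{*}G$ and $\widetilde G^{*}\widetilde G$. A direct computation shows $\widetilde G^{*}\widetilde G = Y\, G^{*}G\, Y^{*}$ with $Y = \bar M (I+\bar E)\bar M^{-1}$, so Ostrowski's congruence inequality gives $\lambda_{i}(\widetilde G^{*}\widetilde G) \in [\sigma_{\min}^{2}(Y), \sigma_{\max}^{2}(Y)]\,\lambda_{i}(G^{*}G)$. The difficulty is that a naive bound on $\|Y-I\|$ introduces $\kappa(M) = \kappa(DL_{B})$, which carries the ill-conditioning $\kappa(D)$. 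The remedy, in the spirit of Mathias and Demmel--Veselic, is to exploit cyclic similarities that move $D$ out of the condition number and into the eigenvectors: writing $\sigma_{i}(G)^{2} = \lambda_{i}(\bar S S)$ with $S = M^{*}M = \bar L_{B}^{T} D^{2} L_{B}$, and correspondingly $\sigma_{i}(\widetilde G)^{2} = \lambda_{i}(\overline{\widetilde S}\,\widetilde S)$ with $\widetilde S = \bar W S W$, $W=(I+E)^{1/2}$, the relative perturbation of the Hermitian products $\bar S^{1/2}S\bar S^{1/2}$ is controlled purely by $\|E\|\kappa(L_{B})$, producing the stated $\mu_{0}(L_{B})\mathcal{O}(\|\delta B\|)$ bound.

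For the component-wise and norm-wise bounds on $u_{i}$ in (\ref{eq:bound for sing val ratio}), I would exploit the Takagi decomposition $G = Q\Sigma Q^{T}$ of complex symmetric $G$, which shows that the left and right singular vectors are related by conjugation, $v_{i} = \bar u_{i}$. Hence $G\bar u_{i} = \Sigma_{ii}u_{i}$ becomes $D\bar u_{i} = \Sigma_{ii} B^{-1} D^{-1} u_{i}$, and the dual identity $G^{*}u_{i} = \Sigma_{ii}\bar u_{i}$ gives $D u_{i} = \Sigma_{ii}\bar B^{-1} D^{-1}\bar u_{i}$. Reading these componentwise yields the two scaling estimates $|u_{i}(j)| \lesssim D_{jj}/\Sigma_{ii}^{1/2}$ and $|u_{i}(j)| \lesssim \Sigma_{ii}^{1/2}/D_{jj}$ simultaneously, with constants controlled by $\|B\|$ and $\|B^{-1}\|$, i.e.\ by $\mu_{1}(L_{B})$. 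The norm-wise bounds in (\ref{eq:bound for sing val ratio}) follow by taking norms in the same two identities and using $B = L_{B}L_{B}^{T}$.

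For the component-wise perturbation bound, the plan is to combine a $\sin\Theta$-type bound on the Hermitian eigenvector problem $\bar S^{1/2} S \bar S^{1/2}$ (whose spectral gaps equal the squared relative gaps of $\Sigma$, and whose perturbation was already estimated in step two) with the scaling relation $D\bar u_{i} = \Sigma_{ii}B^{-1}D^{-1}u_{i}$ applied to both $G$ and $\widetilde G$. The relative gap provides the denominator $\mathrm{relgap}_{i}$ in the numerator factor $\mu_{2}(L_{B})$, and the graded scaling transfers the bound componentwise through $D$ and $\Sigma^{1/2}$. The hardest part, and the heart of the whole argument, is the step-two tracking of the $D$ dependence: one must verify that every time $D$ appears with a large power, it cancels against an inverse $D$ introduced by the cyclic similarity or by passing from $u_{i}$ to $Du_{i}$, leaving behind only $L_{B}$-dependent factors. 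Once this is done, the component-wise singular vector bound follows by pairing the Hermitian $\sin\Theta$ estimate with the scaling identities derived above.
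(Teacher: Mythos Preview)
Your high-level strategy—encode the perturbation multiplicatively inside $L_B$, exploit the Takagi relation $G\bar u_i=\Sigma_{ii}u_i$, and track cancellation of $D$—is the right instinct, and the norm-wise lower bounds in~(\ref{eq:bound for sing val ratio}) do follow exactly as you say. But there is a genuine gap in your derivation of the component-wise estimate $|u_i(j)|\le\mu_1(L_B)\min\{D_{jj}/\Sigma_{ii}^{1/2},\,\Sigma_{ii}^{1/2}/D_{jj}\}$, and the perturbation bound inherits the same problem.

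Reading the Takagi identities $D\bar u_i=\Sigma_{ii}B^{-1}D^{-1}u_i$ and its conjugate componentwise yields
\[
|u_i(j)|\le\frac{D_{jj}}{\Sigma_{ii}}\,\|B\|\,\|Du_i\|,\qquad
|u_i(j)|\le\frac{\Sigma_{ii}}{D_{jj}}\,\|B^{-1}\|\,\|D^{-1}u_i\|,
\]
so to land on the stated bound you would need $\|Du_i\|\lesssim\Sigma_{ii}^{1/2}$ and $\|D^{-1}u_i\|\lesssim\Sigma_{ii}^{-1/2}$ with constants depending only on $L_B$. Those are \emph{upper} bounds, and the Takagi identities alone deliver only the \emph{lower} bounds in the statement together with the circular pair $\|Du_i\|\le\Sigma_{ii}\|B^{-1}\|\|D^{-1}u_i\|$ and its reverse, which collapse to $1\le\kappa(B)$. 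The two-sided grading is exactly the obstruction: for a one-sidedly graded matrix $D^2A$ one has $AU_r=D^{-2}U_l\Sigma$ with $U_r$ \emph{unitary}, and that unitarity is what pins down the missing upper bound.

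The paper closes this gap by manufacturing a one-sided problem. Writing $DL_BD^{-1}=QR$, one gets $DBD=QD^2A$ with $A=(D^{-2}RD^2)(D^{-1}L_B^{\mathrm T}D)$, so that $\kappa(A)\le\kappa(L_B)^2$, and—crucially—$Q$ is itself graded relative to $D$: $\|D^{\pm1}QD^{\mp1}\|\le\kappa(L_B)$. Mathias's one-sided bounds then give the component-wise grading of the left singular vectors of $D^2A$, and the grading of $Q$ transports it back to those of $DBD$. The same reduction drives the perturbation argument: the paper passes to the Jordan--Wielandt block form and applies first-order eigenvector perturbation, controlling the inner products via $\|D^2v_k\|\le\sigma_k\|A^{-1}\|$—again a one-sided inequality that is not visible from the raw Takagi relations. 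Your $\bar S^{1/2}S\bar S^{1/2}$ route for the singular-value ratio is reasonable (the paper simply cites Demmel et~al.\ for that part), but for the component-wise singular-vector bounds you need to insert the $QD^2A$ reduction, or something equivalent, to break the circularity.
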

\begin{proof}
See the Appendix for proofs of the component-wise bounds on the left
singular vectors. The bound on the relative difference $\left|\Sigma_{ii}-\widetilde{\Sigma_{ii}}\right|/\Sigma_{ii}$
is proven in \cite{D-G-E-S-V-D:1997}.

We now prove the norm-wise bound on $Du_{i}/\Sigma_{ii}^{1/2}$. To
do so, note that $DBD$ has the SVD $DBD=U\Sigma U^{\text{T}}$, since
$B$ is complex symmetric (i.e., $B$ has a Takagi factorization).
Now suppose that $DBD\overline{u_{i}}=\Sigma_{ii}u_{i}$. Then the
bound for $Du_{i}/\Sigma_{ii}^{1/2}$ follows from \[
\Sigma_{ii}=\left|\left(DBD\overline{u_{i}},u_{i}\right)\right|=\left|\left(BD\overline{u_{i}},Du_{i}\right)\right|\leq\left\Vert B\right\Vert \left\Vert Du_{i}\right\Vert ^{2}.\]
Similarly, note that $D^{-1}B^{-1}D^{-1}u_{i}=\Sigma_{ii}^{-1}\overline{u_{i}}$.
The bound for $D^{-1}u_{i}\Sigma_{ii}^{1/2}$ then follows from\[
\Sigma_{ii}^{-1}=\left|\left(D^{-1}B^{-1}D^{-1}u_{i},\overline{u_{i}}\right)\right|=\left|\left(B^{-1}D^{-1}u_{i},D^{-1}\overline{u_{i}}\right)\right|\leq\left\Vert B^{-1}\right\Vert \left\Vert D^{-1}u_{i}\right\Vert ^{2}.\]

\end{proof}
We now prove Theorem~\ref{thm:perturbation for con-eig decomp}.
\begin{proof}
Recall that the matrix $Z$ of con-eigenvectors satisfies $Z=P\overline{L}\left(D\overline{V}\Sigma^{-1/2}\right)$,
where $C=\left(PL\right)D^{2}\left(PL\right)^{*}$ is the Cholesky
factorization of $C$ (with complete pivoting) and $V$ is the matrix
of right singular vectors of $G=D\left(L^{\text{T}}L\right)D$. Let
$\widetilde{C}=\left(P\widetilde{L}\right)\widetilde{D}^{2}\left(P\widetilde{L}\right)^{*}$
denote the Cholesky factorization of $\widetilde{C}$. From Lemma~\ref{lem:perturbation, factorization of Cauchy}
(see also \cite{DEMMEL:1999}), \begin{equation}
\left\Vert L-\widetilde{L}\right\Vert =\left\Vert L\right\Vert \mathcal{O}\left(\eta\right),\,\,\,\,\left|\frac{D_{ii}-\widetilde{D_{ii}}}{D_{ii}}\right|=\mathcal{O}\left(\eta\right).\label{eq:bound on SVD perturbed, Cauchy}\end{equation}
Defining $\widetilde{G}=\widetilde{D}\left(\widetilde{L}^{\text{T}}\widetilde{L}\right)\widetilde{D}$,
the above bounds yield $\widetilde{G}=D\left(L^{\text{T}}L+E\right)D$,
where $\left\Vert E\right\Vert =\mathcal{O}\left(\eta\right)$ (since
complete pivoting is used in the Cholesky factorization of $C$, $\left\Vert L\right\Vert =\mathcal{O}\left(1\right)$).
Since the $i$th con-eigenvalue $\lambda_{i}$ of $C$ is given by
$\lambda_{i}=\Sigma_{ii}$, the estimate for the relative difference
$\left|\left(\lambda_{i}-\widetilde{\lambda_{i}}\right)/\lambda_{i}\right|$
now follows from Proposition~\ref{pro:perturb for sing vecs of DBD}.

To derive bounds for the singular vectors, we apply Proposition~\ref{pro:perturb for sing vecs of DBD}
to $\widetilde{G}=D\left(L^{\text{T}}L+E\right)D$. In particular,
there exist right unit singular vectors $\widetilde{v}_{i}$ of $\widetilde{G}$
and $v_{i}$ of $G$ such that \begin{equation}
\left|v_{i}(j)\right|\leq\mu_{1}\left(L_{B}\right)d_{ij}\left(D,\Sigma\right),\label{eq:bound on sing vectors, perturbed proof}\end{equation}
and\begin{equation}
\left|v_{i}(j)-\widetilde{v}_{i}(j)\right|\le d_{ij}\left(D,\Sigma\right)\left(\frac{\mu_{2}\left(L_{B}\right)}{\mbox{relgap}{}_{i}}\right)O\left(\left\Vert E\right\Vert \right)\le d_{ij}\left(D,\Sigma\right)\frac{\mu_{2}\left(L_{B}\right)}{\mbox{relgap}{}_{i}}\mathcal{O}\left(\eta\right),\label{eq:bound on sing vector diff, perturbed proof}\end{equation}
where \[
d_{ij}\left(D,\Sigma\right)=\min\left\{ \frac{D_{jj}}{\Sigma_{ii}^{1/2}},\frac{\Sigma_{ii}^{1/2}}{D_{jj}^{2}}\right\} .\]
Here $\mu_{1}\left(L_{B}\right)$ and $\mu_{2}\left(L_{B}\right)$
are defined in (\ref{eq:constants for bounds in proof}). Proposition~\ref{pro:perturb for sing vecs of DBD}
also shows that \begin{equation}
1-\mu_{0}\left(L_{B}\right)\mathcal{O}\left(\eta\right)\leq\frac{\Sigma_{ii}}{\widetilde{\Sigma_{ii}}}\leq1+\mu_{0}\left(L_{B}\right)\mathcal{O}\left(\eta\right).\label{eq:ratio of sing vals, perturb proof}\end{equation}
Finally, defining $w_{i}=Dv_{i}/\Sigma_{ii}^{1/2}$ and $\widetilde{w}_{i}=\widetilde{D}\widetilde{v}_{i}/\widetilde{\Sigma_{ii}}^{1/2}$,
we have \begin{eqnarray*}
\left|w_{i}(j)-\widetilde{w}_{i}(j)\right| & = & \frac{D_{jj}}{\Sigma_{ii}^{1/2}}\left|v_{i}(j)-\frac{\widetilde{D_{jj}}}{D_{jj}}\left(\frac{\Sigma_{ii}}{\widetilde{\Sigma_{ii}}}\right)^{1/2}\widetilde{v}_{i}(j)\right|\\
 & \leq & \frac{D_{jj}}{\Sigma_{ii}^{1/2}}\left(\left|v_{i}(j)-\widetilde{v}_{i}(j)\right|+\left|v_{i}(j)\right|\mu_{0}\left(L_{B}\right)\mathcal{O}\left(\eta\right)\right)\\
 & \leq & \left(\frac{\mu_{2}\left(L_{B}\right)}{\mbox{relgap}{}_{i}}+\mu_{0}\left(L_{B}\right)\mu_{1}\left(L_{B}\right)\right)\mathcal{O}\left(\eta\right),\end{eqnarray*}
where we used (\ref{eq:bound on SVD perturbed, Cauchy}) and (\ref{eq:ratio of sing vals, perturb proof})
in the first inequality, and (\ref{eq:bound on sing vectors, perturbed proof})-(\ref{eq:bound on sing vector diff, perturbed proof})
in the last one. %
{} 

Proposition~\ref{pro:perturb for sing vecs of DBD} also implies
that $1/\left\Vert w_{i}\right\Vert \leq\sigma_{\max}^{1/2}\left(L^{\text{T}}L\right)=\mathcal{O}\left(1\right)$
and $1/\left\Vert \widehat{w_{i}}\right\Vert \leq\mathcal{O}\left(1\right)$
(since complete pivoting is used, $\left\Vert L\right\Vert =\mathcal{O}\left(1\right)$).
The proof now follows upon noting that the con-eigenvectors $z_{i}$
and $\widetilde{z_{i}}$ satisfy $z_{i}=Lw_{i}$, $\widetilde{z_{i}}=\widetilde{L}\widetilde{w_{i}}$,
and using (\ref{eq:bound on SVD perturbed, Cauchy}),\begin{eqnarray*}
\frac{\left\Vert z_{i}-\widetilde{z_{i}}\right\Vert }{\left\Vert z_{i}\right\Vert } & \leq & \frac{\left\Vert Lw_{i}-L\widetilde{w_{i}}\right\Vert }{\left\Vert Lw_{i}\right\Vert }+\left\Vert L\right\Vert \frac{\left\Vert \widetilde{w_{i}}\right\Vert }{\left\Vert L\widetilde{w_{i}}\right\Vert }\mathcal{O}\left(\eta\right)\\
 & \leq & \left\Vert L\right\Vert \left\Vert L^{-1}\right\Vert \left(\frac{\left\Vert w_{i}-\widetilde{w_{i}}\right\Vert }{\left\Vert w_{i}\right\Vert }+\mathcal{O}\left(\eta\right)\right)\\
 & \leq & \kappa\left(L\right)\left(\frac{\mu_{2}\left(L_{B}\right)}{\mbox{relgap}{}_{i}}+\mu_{0}\left(L_{B}\right)\mu_{1}\left(L_{B}\right)\right)\mathcal{O}\left(\eta\right).\end{eqnarray*}

%
{}
\end{proof}

\subsection{\label{sub:High-relative-accuracy}Proof of Theorem~\ref{thm:high accuracy of algorithm-1}
(high relative accuracy of Algorithm~\ref{Flo:Accurate-con-eigenvalue-decomposition})}

We now show that Algorithm~\ref{Flo:Accurate-con-eigenvalue-decomposition}
accurately computes the eigenvectors of $\overline{C}C$ (recall that
$C=\left(PL\right)D^{2}\left(PL\right)^{*}$), as long as the $n$
leading principal minors of $L^{T}L$ are well-conditioned and the
relative gap between the eigenvalues is not too small. 

Before proving Theorem~\ref{thm:high accuracy of algorithm-1}, we
first need several lemmas on graded matrices.
\begin{lem}
\label{lem:graded triangular matrices}Let $D$ be a positive definite
diagonal matrix with decreasing diagonal elements, and let $L$ and
$R$ denote nonsingular lower and upper triangular matrices. Then\[
\left\Vert DLD^{-1}\right\Vert \leq\left\Vert L\right\Vert ,\,\,\,\left\Vert \left(DLD^{-1}\right)^{-1}\right\Vert \leq\left\Vert L^{-1}\right\Vert ,\]
and \[
\left\Vert D^{-1}RD\right\Vert \leq\left\Vert R\right\Vert ,\,\,\,\left\Vert \left(D^{-1}RD\right)^{-1}\right\Vert \leq\left\Vert R^{-1}\right\Vert .\]
\end{lem}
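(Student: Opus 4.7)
My plan is to reduce all four inequalities to the single statement $\|DLD^{-1}\| \le \|L\|$ for lower triangular $L$, and then prove that statement by iterated convex combinations. The second inequality follows because $L^{-1}$ is itself lower triangular (inverses of lower triangular matrices are lower triangular), so $(DLD^{-1})^{-1} = DL^{-1}D^{-1}$ is of the same form with $L^{-1}$ in place of $L$. For upper triangular $R$, I would set $L := R^{T}$, which is lower triangular, and use that the norm is invariant under transpose: $\|D^{-1}RD\| = \|(D^{-1}RD)^{T}\| = \|DLD^{-1}\| \le \|L\| = \|R\|$, and similarly for $(D^{-1}RD)^{-1} = D^{-1}R^{-1}D$.

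The core step is the following sub-lemma: if $D_{\rho} = \mathrm{diag}(I_{k},\rho I_{n-k})$ with $0 < \rho \le 1$, then $\|D_{\rho} L D_{\rho}^{-1}\| \le \|L\|$ for any lower triangular $L$. Partitioning $L = \begin{pmatrix} L_{11} & 0 \\ L_{21} & L_{22} \end{pmatrix}$ conformally to $D_{\rho}$ (the zero block being forced by lower triangularity), a direct computation gives the convex combination
\[
D_{\rho} L D_{\rho}^{-1} = \begin{pmatrix} L_{11} & 0 \\ \rho L_{21} & L_{22} \end{pmatrix} = \rho\, L \;+\; (1-\rho)\begin{pmatrix} L_{11} & 0 \\ 0 & L_{22} \end{pmatrix}.
\]
The triangle inequality then gives $\|D_{\rho} L D_{\rho}^{-1}\| \le \rho\|L\| + (1-\rho)\|\mathrm{blkdiag}(L_{11},L_{22})\|$. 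The block-diagonal term is controlled by $\|L\|$ because principal submatrices have norm at most the full matrix and block-diagonals split cleanly (for Frobenius, $\|\mathrm{blkdiag}(L_{11},L_{22})\|_F^2 = \|L_{11}\|_F^2 + \|L_{22}\|_F^2 \le \|L\|_F^2$; for the spectral norm, $\|\mathrm{blkdiag}(L_{11},L_{22})\|_2 = \max(\|L_{11}\|_2,\|L_{22}\|_2) \le \|L\|_2$). Hence $\|D_{\rho} L D_{\rho}^{-1}\| \le \|L\|$.

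To handle a general decreasing diagonal $D$, I factor $D = \prod_{k=1}^{n} S_{k}$ where $S_{k} = \mathrm{diag}(1,\ldots,1,s_{k},\ldots,s_{k})$ has $k-1$ leading ones and $s_{k} = D_{kk}/D_{k-1,k-1}$ (with the convention $D_{00}:=1$). The decreasing property of $D$ gives $s_{k} \le 1$ for $k \ge 2$, while $S_{1} = D_{11} I$ is scalar and cancels in conjugation. Since the $S_{k}$ are diagonal and commute, I peel them off one at a time, applying the sub-lemma at each stage; each intermediate matrix $S_{k}\cdots S_{n}LS_{n}^{-1}\cdots S_{k}^{-1}$ remains lower triangular (conjugation by a diagonal matrix preserves triangularity), so the sub-lemma is available at every step, yielding $\|DLD^{-1}\| \le \|L\|$.

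The main obstacle is the sub-lemma itself; once the convex-combination identity and the principal-submatrix monotonicity are in hand, the rest is formal reductions. The virtue of this approach is that it bypasses any entrywise comparison and works uniformly for the Frobenius and spectral norms (and in fact for any unitarily invariant norm whose value on a block diagonal is bounded by the norm of the enclosing matrix).
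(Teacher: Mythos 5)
Your proof is correct, but it takes a genuinely different route from the paper's. The paper's argument is a one-line entrywise observation: $\bigl(DLD^{-1}\bigr)_{ij}=\frac{D_{ii}}{D_{jj}}L_{ij}$, and on the lower triangle ($i\geq j$) the decreasing diagonal gives $D_{ii}/D_{jj}\leq1$, so $\left|\bigl(DLD^{-1}\bigr)_{ij}\right|\leq\left|L_{ij}\right|$ entrywise; since the Frobenius norm is absolute (monotone under entrywise domination), the norm bound follows, with the same remark applied to $L^{-1}$ (also lower triangular) and to the upper triangular cases. You instead factor $D$ into commuting ``step'' matrices $S_{k}$, prove a pinching-type sub-lemma for a single step via the convex-combination identity $D_{\rho}LD_{\rho}^{-1}=\rho L+(1-\rho)\,\mathrm{blkdiag}(L_{11},L_{22})$, and iterate; all the pieces check out (the telescoping $\prod_{k\le j}s_{k}=D_{jj}$, the preservation of triangularity under diagonal conjugation, and the transpose reduction for $R$ are all valid). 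What your approach buys is uniformity over norms: it works for any unitarily invariant norm for which the block-diagonal (pinching) inequality holds, including the spectral norm, where the entrywise route is slightly delicate because entrywise domination only bounds $\|A\|_{2}$ by $\|\,|A|\,\|_{2}$. What it costs is length, and it discards the entrywise conclusion $\left|\bigl(DLD^{-1}\bigr)_{ij}\right|\leq\left|L_{ij}\right|$, which the paper's proof gets for free; since the paper fixes the Frobenius norm throughout, its shorter argument suffices for everything the lemma is used for.
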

\begin{proof}
Since the diagonal elements of $D$ are decreasing and $L$ is lower
triangular,\[
\left|\left(DLD^{-1}\right)_{ij}\right|\leq\frac{D_{ii}}{D_{jj}}\left|L_{ij}\right|\leq\left|L_{ij}\right|,\]
and \[
\left|\left(DLD^{-1}\right)_{ij}^{-1}\right|\leq\frac{D_{ii}}{D_{jj}}\left|\left(L^{-1}\right)_{ij}\right|\leq\left|\left(L^{-1}\right)_{ij}\right|.\]
Since the Frobenius norm is absolute, the first two inequalities in
Lemma~\ref{lem:graded triangular matrices} follow. The other two
inequalities can be shown in a similar fashion.
\end{proof}
~~~
\begin{lem}
\label{lem:R_0, for main alg proof}Let $D>0$ denote a diagonal matrix
with decreasing diagonal elements, and let $B$ denote a non-singular,
complex symmetric matrix with LU factorization $B=L_{B}L_{B}^{\text{T}}$.
Suppose that $DBD$ has the QR factorization $QR=DBD$. Then the upper
triangular matrix $R_{0}=D^{-2}R$ satisfies\[
\left\Vert DR_{0}D^{-1}\right\Vert \leq\left\Vert L_{B}\right\Vert ^{2},\,\,\,\left\Vert DR_{0}^{-1}D^{-1}\right\Vert \leq\left\Vert L_{B}^{-1}\right\Vert ^{2}.\]
Moreover, the $i$th left singular vector $u_{i}$ of $R$ (corresponding
to singular value $\Sigma_{ii}$) satisfies\begin{equation}
\left\Vert D^{-1}u_{i}\Sigma_{ii}^{1/2}\right\Vert \geq\left\Vert B^{-1}\right\Vert ^{-1/2}\kappa^{-1}\left(L_{B}\right).\label{eq:D^{-1}u_iSigma_ii, lemma}\end{equation}
\end{lem}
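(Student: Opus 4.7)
The plan is to reduce the analysis of $R$ to an auxiliary QR factorization of the lower triangular matrix $DL_B$. Using $B = L_B L_B^T$, I factor $DBD = (DL_B)(L_B^T D)$ and QR-factor $DL_B = Q_1 R_1$, with $R_1$ upper triangular. Then $DBD = Q_1(R_1 L_B^T D)$, and since $R_1 L_B^T D$ is a product of upper triangular matrices, uniqueness of the QR factorization (up to phase) identifies $Q = Q_1$ and $R = R_1 L_B^T D$. Substituting into $R_0 = D^{-2} R$ then gives
\[
DR_0 D^{-1} = D^{-1} R_1 L_B^T, \qquad DR_0^{-1} D^{-1} = L_B^{-T} R_1^{-1} D.
\]

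The heart of the argument is a component-wise bound on $R_1$ and $R_1^{-1}$. Writing $R_1 = Q_1^*(DL_B)$ entry by entry gives $(R_1)_{ij} = \sum_k \overline{(Q_1)_{ki}}\, D_{kk}\, (L_B)_{kj}$, and the unit lower triangularity of $L_B$ restricts the sum to $k \geq j$. Since $D$ has decreasing diagonal, $D_{kk} \leq D_{jj}$ on this range, and Cauchy--Schwarz combined with $\|Q_1(:,i)\|_2 = 1$ yields
\[
|(R_1)_{ij}| \leq D_{jj}\, \|L_B(:,j)\|_2, \qquad i \leq j.
\]
An entirely analogous computation applied to $R_1^{-1} = L_B^{-1} D^{-1} Q_1$, exploiting the unit lower triangularity of $L_B^{-1}$, produces
\[
|(R_1^{-1})_{ij}| \leq D_{ii}^{-1}\, \|L_B^{-1}(i,:)\|_2, \qquad i \leq j.
\]

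With these bounds in hand, the norm estimates follow by routine bookkeeping. Dividing the first by $D_{ii}$ and using $D_{jj}/D_{ii} \leq 1$ for $i \leq j$ gives $|(D^{-1} R_1)_{ij}| \leq \|L_B(:,j)\|_2$; summing over the upper triangle and applying submultiplicativity of the Frobenius norm yields $\|DR_0 D^{-1}\| = \|D^{-1} R_1 L_B^T\| \leq \|L_B\|^2$ (with $n$-dependent factors absorbed as elsewhere in the paper). Symmetrically, $\|DR_0^{-1} D^{-1}\| = \|L_B^{-T} R_1^{-1} D\| \leq \|L_B^{-1}\|^2$. For the bound (\ref{eq:D^{-1}u_iSigma_ii, lemma}), note that since $R = Q^*(DBD)$ and $Q$ is unitary, the left singular vectors $u_i$ of $R$ and the Takagi vectors $\tilde u_i$ of $DBD$ (which coincide with the left singular vectors of $DBD$, by complex symmetry) are related by $\tilde u_i = Q u_i$. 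Proposition~\ref{pro:perturb for sing vecs of DBD} then gives
\[
\|B^{-1}\|^{-1/2} \leq \|D^{-1} \tilde u_i\, \Sigma_{ii}^{1/2}\| = \|(D^{-1}QD)(D^{-1} u_i\, \Sigma_{ii}^{1/2})\| \leq \|D^{-1} Q D\| \cdot \|D^{-1} u_i\, \Sigma_{ii}^{1/2}\|.
\]
From $Q = Q_1 = DL_B R_1^{-1}$ I read off $D^{-1} Q D = L_B R_1^{-1} D$, and the component-wise bound on $R_1^{-1}$ gives $\|L_B R_1^{-1} D\| \leq \|L_B\|\cdot\|L_B^{-1}\| = \kappa(L_B)$. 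Rearranging produces (\ref{eq:D^{-1}u_iSigma_ii, lemma}).

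The main obstacle is the component-wise bound on $R_1$ in the second step: it requires delicately combining the triangular structure of $DL_B$, the unit-norm columns of $Q_1$, and the monotonicity of $D$ via a restricted Cauchy--Schwarz inequality. Once this estimate is established, all three assertions of the lemma — including the lower bound on $\|D^{-1} u_i \Sigma_{ii}^{1/2}\|$ — follow by reusing essentially the same argument applied to $R_1^{-1}$ together with routine matrix-norm manipulations.
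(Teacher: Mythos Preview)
Your argument is correct and follows essentially the same strategy as the paper: both factor $DBD$ through $L_B$, take the QR factorization of the left factor (your $Q_1 R_1 = DL_B$ and the paper's $Q_1 R_1 = DL_B D^{-1}$ yield the same $Q_1$, with the two $R_1$'s differing only by a factor of $D$), and then exploit triangularity to control the graded quantities; for the singular-vector bound both route through $\|D^{-1}QD\|\le\kappa(L_B)$ and the inequality $\Sigma_{ii}^{-1}\le\|B^{-1}\|\,\|D^{-1}(Qu_i)\|^2$. The one notable difference is in execution: where you establish the bounds on $D^{-1}R_1$ and $R_1^{-1}D$ by component-wise Cauchy--Schwarz (incurring $\sqrt{n}$ factors that you then absorb), the paper invokes the one-line observation of Lemma~\ref{lem:graded triangular matrices} that $\|D^{-1}TD\|\le\|T\|$ for any upper-triangular $T$ when $D$ has decreasing diagonal, which delivers the stated constants exactly and shortens the argument considerably.
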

\begin{proof}
First note $DBD=\left(DL_{B}D^{-1}\right)\left(DL_{B}^{\text{T}}D\right)$.
Now suppose that $DL_{B}D^{-1}$ has the QR factorization $Q_{1}R_{1}=DL_{B}D^{-1}$.
Then $Q_{1}\left(R_{1}DL_{B}^{\text{T}}D\right)=QR$. Since $R_{1}DL_{B}^{\text{T}}D$
is upper triangular, there is a diagonal matrix $\Omega$ such that
$\left|\Omega_{ij}\right|=1$, $Q=Q_{1}\Omega^{-1}$, and $R=\Omega R_{1}DL_{B}^{\text{T}}D$.
Therefore, we have \[
R_{0}=D^{-2}R=\Omega D^{-2}R_{1}DL_{B}^{\text{T}}D.\]
It follows that $DR_{0}D^{-1}=\Omega\left(D^{-1}R_{1}D\right)L_{B}^{\text{T}}$,
and we obtain \begin{eqnarray*}
\left\Vert DR_{0}D^{-1}\right\Vert  & = & \left\Vert \Omega\left(D^{-1}R_{1}D\right)L_{B}^{\text{T}}\right\Vert \leq\left\Vert R_{1}\right\Vert \left\Vert L_{B}^{\text{T}}\right\Vert \leq\left\Vert L_{B}\right\Vert ^{2}.\end{eqnarray*}
In the first inequality, we use Lemma~\ref{lem:graded triangular matrices},
and in the last one we use $\left\Vert R_{1}\right\Vert =\left\Vert DL_{B}D^{-1}\right\Vert $
and Lemma~\ref{lem:graded triangular matrices}. Similarly, we have
\begin{eqnarray*}
\left\Vert DR_{0}^{-1}D^{-1}\right\Vert  & = & \left\Vert \left(L_{B}^{\text{T}}\right)^{-1}\left(D^{-1}R_{1}^{-1}D\right)\Omega^{-1}\right\Vert \leq\left\Vert R_{1}^{-1}\right\Vert \left\Vert \left(L_{B}^{\text{T}}\right)^{-1}\right\Vert \leq\left\Vert L_{B}^{-1}\right\Vert ^{2},\end{eqnarray*}
where we use Lemma~\ref{lem:graded triangular matrices} in the first
inequality, and $\left\Vert R_{1}^{-1}\right\Vert =\left\Vert DL_{B}^{-1}D^{-1}\right\Vert $
and Lemma~\ref{lem:graded triangular matrices} in the last one.

In order to prove the bound for $D^{-1}u_{i}\Sigma_{ii}^{1/2}$, we
first claim that $\left\Vert D^{-1}QD\right\Vert \leq\kappa\left(L_{B}\right)$.
Indeed, since $Q_{1}R_{1}=DL_{B}D^{-1}$ and $Q=Q_{1}\Omega^{-1}$,
we have \begin{eqnarray*}
\left\Vert D^{-1}QD\right\Vert  & = & \left\Vert D^{-1}Q_{1}D\right\Vert =\left\Vert L_{B}\left(D^{-1}R_{1}^{-1}D\right)\right\Vert \\
 & \leq & \left\Vert L_{B}\right\Vert \left\Vert D^{-1}R_{1}^{-1}D\right\Vert \\
 & \leq & \left\Vert L_{B}\right\Vert \left\Vert R_{1}^{-1}\right\Vert \leq\left\Vert L_{B}\right\Vert \left\Vert DL_{B}^{-1}D^{-1}\right\Vert \leq\left\Vert L_{B}\right\Vert \left\Vert L_{B}^{-1}\right\Vert .\end{eqnarray*}
In the above string of inequalities, we use of Lemma~\ref{lem:graded triangular matrices}
repeatedly.

Now, if $R$ has the SVD $R=U\Sigma V^{*}$, then $D^{-1}B^{-1}D^{-1}$
has the SVD $D^{-1}B^{-1}D^{-1}=V\Sigma^{-1}\left(QU\right)^{*}$
(recall that $QR=DBD$). Therefore, the left and right singular vectors
$u_{i}$ and $v_{i}$ of $R$ satisfy %
{} $\left(D^{-1}B^{-1}D^{-1}\right)\left(Qu_{i}\right)=\Sigma_{ii}^{-1}v_{i}$.
Since $B$ is complex symmetric, we may also assume (without loss
of generality) that $Qu_{i}=\overline{v_{i}}$. The bound on $\left\Vert D^{-1}u_{i}\Sigma_{ii}^{1/2}\right\Vert $
now follows from \begin{eqnarray*}
\Sigma_{ii}^{-1} & = & \left|\left(B^{-1}\left(D^{-1}\overline{v_{i}}\right),D^{-1}v_{i}\right)\right|\leq\left\Vert B^{-1}\right\Vert \left\Vert D^{-1}v_{i}\right\Vert \left\Vert D^{-1}\overline{v_{i}}\right\Vert \\
 & = & \left\Vert B^{-1}\right\Vert \left\Vert D^{-1}v_{i}\right\Vert ^{2}=\left\Vert B^{-1}\right\Vert \left\Vert D^{-1}Qu_{i}\right\Vert ^{2}\\
 & = & \left\Vert B^{-1}\right\Vert \left\Vert \left(D^{-1}QD\right)D^{-1}u_{i}\right\Vert ^{2}\\
 & \leq & \left\Vert B^{-1}\right\Vert \kappa^{2}\left(L_{B}\right)\left\Vert D^{-1}u_{i}\right\Vert ^{2}.\end{eqnarray*}
In the last inequality, we used the bound $\left\Vert D^{-1}QD\right\Vert \leq\kappa\left(L_{B}\right)$,
as shown in the previous paragraph.\end{proof}
\begin{lem}
\label{lem:R0 for main alg proof, 2}Let $DBD$, $Q$, $R$, and $L_{B}$
be as in Lemma~\ref{lem:R_0, for main alg proof}, and define $R_{1}=D^{-1}RD^{-1}$.
Then $\kappa\left(R_{1}\right)\leq\kappa^{2}\left(L_{B}\right)$,
and the $i$th right singular vector $v_{i}$ of $R$ (corresponding
to singular value $\Sigma_{ii}$) satisfies\[
Dv_{i}\Sigma_{ii}^{-1/2}=R_{1}^{-1}\left(D^{-1}u_{i}\Sigma_{ii}^{1/2}\right),\]
where \[
\left\Vert R_{1}\right\Vert \leq\left\Vert L_{B}\right\Vert ^{2},\,\,\,\left\Vert R_{1}^{-1}\right\Vert \leq\left\Vert L_{B}^{-1}\right\Vert ^{2}.\]
\end{lem}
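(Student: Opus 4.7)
The plan is to reduce everything to the factorization of $R$ already derived in the proof of Lemma~\ref{lem:R_0, for main alg proof} and then to apply Lemma~\ref{lem:graded triangular matrices} componentwise.

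First I would derive the identity for the singular vectors. Since $R=U\Sigma V^{*}$ with simple singular values, we have $Rv_i = \Sigma_{ii} u_i$, hence $v_i = \Sigma_{ii}\,R^{-1}u_i$. Multiplying by $D$ and inserting $I = D^{-1}D$ on the right gives
\[
Dv_i\Sigma_{ii}^{-1/2} \;=\; \Sigma_{ii}^{1/2}\,DR^{-1}D\,(D^{-1}u_i) \;=\; R_1^{-1}\bigl(D^{-1}u_i\Sigma_{ii}^{1/2}\bigr),
\]
where I used $DR^{-1}D = (D^{-1}RD^{-1})^{-1} = R_1^{-1}$. This part is routine.

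Next I would bound $\|R_1\|$ and $\|R_1^{-1}\|$. Recall from the proof of Lemma~\ref{lem:R_0, for main alg proof} that, writing $DL_BD^{-1}=Q_1R_1'$ for its QR factorization, one obtains $R = \Omega R_1' D L_B^{\text{T}} D$, where $\Omega$ is a diagonal phase matrix with $|\Omega_{ii}|=1$. Since $\Omega$ commutes with $D^{\pm 1}$, this yields
\[
R_1 \;=\; D^{-1}RD^{-1} \;=\; \Omega\,(D^{-1}R_1'D)\,L_B^{\text{T}}.
\]
Applying Lemma~\ref{lem:graded triangular matrices} to the upper triangular matrix $R_1'$ gives $\|D^{-1}R_1'D\|\leq \|R_1'\|$ and $\|D^{-1}R_1'^{-1}D\|\leq \|R_1'^{-1}\|$; moreover, the orthogonality of $Q_1$ gives $\|R_1'\|=\|DL_BD^{-1}\|\leq \|L_B\|$ and $\|R_1'^{-1}\|=\|DL_B^{-1}D^{-1}\|\leq \|L_B^{-1}\|$ by another application of Lemma~\ref{lem:graded triangular matrices}. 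Composing, I obtain
\[
\|R_1\| \leq \|D^{-1}R_1'D\|\,\|L_B^{\text{T}}\| \leq \|L_B\|^2,\qquad \|R_1^{-1}\| \leq \|L_B^{-1}\|\,\|D^{-1}R_1'^{-1}D\| \leq \|L_B^{-1}\|^2,
\]
and the bound $\kappa(R_1)\leq \kappa^2(L_B)$ follows immediately by multiplying these.

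The main obstacle is merely bookkeeping: one must remember that the $R_1$ appearing in the current lemma (defined as $D^{-1}RD^{-1}$) is distinct from the $R_1'$ produced inside the proof of Lemma~\ref{lem:R_0, for main alg proof}, and one must exploit the fact that $\Omega$ commutes with diagonal matrices to keep the graded estimates clean. Once the factorization $R_1=\Omega(D^{-1}R_1'D)L_B^{\text{T}}$ is in hand, the result is a direct consequence of Lemma~\ref{lem:graded triangular matrices}.
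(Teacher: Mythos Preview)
Your proof is correct and follows essentially the same approach as the paper. The only difference is one of packaging: the paper simply observes that $R_1 = D R_0 D^{-1}$ with $R_0 = D^{-2}R$, and then quotes the bounds $\|DR_0D^{-1}\|\le\|L_B\|^2$ and $\|DR_0^{-1}D^{-1}\|\le\|L_B^{-1}\|^2$ already established in Lemma~\ref{lem:R_0, for main alg proof}, whereas you unwind that lemma's factorization $R=\Omega R_1'DL_B^{\text T}D$ and reprove those same bounds from scratch; the singular-vector identity is handled identically in both.
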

\begin{proof}
Define $R_{0}=D^{-2}R$ and $R_{1}=D^{-1}RD^{-1}$. Then since $R_{1}=DR_{0}D^{-1}$,
the bounds for $\left\Vert R_{1}\right\Vert $ and $\left\Vert R_{1}^{-1}\right\Vert $
follow from Lemma~\ref{lem:R_0, for main alg proof}. Using (\ref{eq:relation between right/left sing vecs}),
we also have that \begin{eqnarray*}
Dv_{i}\Sigma_{ii}^{-1/2} & = & DR^{-1}u_{i}\Sigma_{ii}^{1/2}\\
 & = & \left(DR_{0}^{-1}D^{-1}\right)\left(D^{-1}u_{i}\Sigma_{ii}^{1/2}\right)\\
 & = & \left(DR_{0}D^{-1}\right)^{-1}\left(D^{-1}u_{i}\Sigma_{ii}^{1/2}\right)\\
 & = & R_{1}^{-1}\left(D^{-1}u_{i}\Sigma_{ii}^{1/2}\right).\end{eqnarray*}

\end{proof}
We now prove Theorem~\ref{thm:high accuracy of algorithm-1}.
\begin{proof}
First consider Step~$1$ of Algorithm~\ref{Flo:Accurate-con-eigenvalue-decomposition}.
From \cite{DEMMEL:1999}, the computed Cholesky factors $\widehat{D}$
and $\widehat{L}$ of $C$ satisfy \begin{equation}
\left|D_{ij}-\widehat{D_{ij}}\right|\leq\left|D_{ij}\right|\mathcal{O}\left(\epsilon\right),\,\,\,\,\left\Vert L-\widehat{L}\right\Vert \leq\mathcal{O}\left(\epsilon\right).\label{eq:bound on computed SVD of Cauchy}\end{equation}
We now examine the error in applying Algorithm~\ref{Flo:con-eig comp of XDX^*}
to compute the con-eigenvectors and con-eigenvalues. 

In Step~$1$ of Algorithm~\ref{Flo:con-eig comp of XDX^*}, the
computed matrix $\widehat{G}$ satisfies $\widehat{G}=\text{fl}\left(\widehat{D}\left(\widehat{L}^{\text{T}}\widehat{L}\right)\widehat{D}\right)=D\left(L^{\text{T}}L+E_{0}\right)D$,
where $\left\Vert E_{0}\right\Vert \leq\mathcal{O}\left(\epsilon\right)$
(recall that complete pivoting is used, so that $\left\Vert L\right\Vert =\mathcal{O}\left(1\right)$).

In Step~$2$ of Algorithm~\ref{Flo:con-eig comp of XDX^*}, a computed
upper triangular factor $\widehat{R}$ of $\widehat{G}$ is obtained
using the Householder QR algorithm. By Theorem~\ref{thm:backward error for QR}
in Section~\ref{sub:QR decomposition}, there is an orthogonal matrix
$Q$ such that \begin{equation}
Q\widehat{R}=D\left(L^{\text{T}}L+E_{2}\right)D,\label{eq:first perturbed matrix, proof of accuracy}\end{equation}
where $E_{2}=E_{0}+E_{1}$, $\left\Vert E_{1}\right\Vert \leq\rho\mu\psi\mathcal{O}\left(\epsilon\right)$,
and $\rho$, $\mu$, and $\psi$ are {}``pivot growth factors''
described in Section~\ref{sub:QR decomposition}. %
{} Now suppose that $L^{\text{T}}L+E_{2}$ has an LU factorization $L^{\text{T}}L+E_{2}=\widetilde{L_{B}}\widetilde{U_{B}}$.
By Lemma~\ref{lem:R_0, for main alg proof}, $R_{0}=D^{-2}\widehat{R}$
satisfies \begin{equation}
\left\Vert DR_{0}D^{-1}\right\Vert \leq\left\Vert L_{B}\right\Vert ^{2}+\mathcal{O}\left(\epsilon\right),\,\,\,\left\Vert DR_{0}^{-1}D^{-1}\right\Vert \leq\left\Vert L_{B}^{-1}\right\Vert ^{2}+\mathcal{O}\left(\epsilon\right),\label{eq:D^2R_0, proof of alg}\end{equation}
where we used that $\left\Vert L_{B}\right\Vert =\left\Vert \widetilde{L_{B}}\right\Vert +\mathcal{O}\left(\epsilon\right)$
and $\left\Vert L_{B}^{-1}\right\Vert =\left\Vert \widetilde{L_{B}}^{-1}\right\Vert +\mathcal{O}\left(\epsilon\right)$.

Step~$3$ of Algorithm~\ref{Flo:con-eig comp of XDX^*} involves
computing an approximate SVD $\widehat{R}\approx\widehat{U_{l}}\widehat{\Sigma}\widehat{U_{r}}^{*}$
using the modified one-sided Jacobi algorithm, applied from the left.
Note that, from (\ref{eq:first perturbed matrix, proof of accuracy}),
if $\widehat{R}$ has the (exact) SVD $\widehat{R}=U_{l}\Sigma U_{r}^{*}$,
then $QU_{l}$ and $U_{r}$ are the matrices of left and right singular
vectors of $D\left(L^{\text{T}}L+E_{2}\right)D$. Moreover, if $\widetilde{\Sigma_{ii}}$
denotes the exact singular value of $\widehat{R}$ (and $D\left(L^{\text{T}}L+E_{2}\right)D$),
then Proposition~\ref{pro:perturb for sing vecs of DBD} ensures
that \[
\frac{\left|\widetilde{\Sigma_{ii}}-\Sigma_{ii}\right|}{\widetilde{\Sigma_{ii}}}\leq\mu_{0}\left(L_{B}\right)\mathcal{O}\left(\left\Vert E_{2}\right\Vert \right)\leq\rho\mu\psi\mu_{0}\left(L_{B}\right)\mathcal{O}\left(\epsilon\right).\]
Now let $\widehat{\Sigma_{ii}}$ denote the computed singular value
of $\widehat{R}$ obtained via the one-sided Jacobi algorithm. Then
from Theorem~\ref{thm:Jacobi bound one-sided scaling} and the equality
$\widehat{R}=D^{2}R_{0}$, we also have that \[
\frac{\left|\widetilde{\Sigma_{ii}}-\widehat{\Sigma_{ii}}\right|}{\widetilde{\Sigma_{ii}}}\leq\nu_{0}\mathcal{O}\left(\epsilon\right).\]
Combining the previous two inequalities yields the bound on the computed
con-eigenvalues (recall that the exact and computed con-eigenvalues
satisfy $\lambda_{i}=\Sigma_{ii}$ and $\widehat{\lambda_{i}}=\widehat{\Sigma_{ii}}$).

Now let $\widehat{u_{i}}$ and $\widehat{\Sigma}_{ii}$ denote the
$i$th computed left singular vector and singular value of $\widehat{R}$.
Then by Theorem~\ref{thm:Jacobi bound one-sided scaling} and the
equality $\widehat{R}=D^{2}R_{0}$, there is an exact left singular
vector $u_{i}^{(1)}$ of $\widehat{R}$, corresponding to singular
value $\widetilde{\Sigma}_{ii}$, such that \begin{equation}
\left\Vert D^{-1}\left(\widehat{u_{i}}-u_{i}^{(1)}\right)\widetilde{\Sigma}_{ii}^{1/2}\right\Vert \leq\frac{\nu}{\mbox{relgap}_{i}}\mathcal{O}\left(\epsilon\right),\label{eq:bound 1 for Jacobi, proof of accuracy}\end{equation}
where $\nu$ is described in Theorem~\ref{thm:Jacobi bound one-sided scaling}
in Section~\ref{sub:Modified-one-sided-Jacobi} (see, in particular,
(\ref{eq:def of nu})).

Now define $R_{1}=D^{-1}\widehat{R}D^{-1}$ and $\widehat{R_{1}}=\text{fl}\left(\widehat{D}^{-1}\widehat{R}\widehat{D}^{-1}\right)$.
We show that the computed vector $\widehat{y_{i}}\approx\widehat{R_{1}}^{-1}\left(\widehat{D}^{-1}\widehat{u_{i}}\widehat{\Sigma}_{ii}^{1/2}\right)$,
obtained from Step~$4$ of Algorithm~\ref{Flo:con-eig comp of XDX^*},
is close to $y_{i}^{(1)}=R_{1}^{-1}\left(D^{-1}u_{i}^{(1)}\widetilde{\Sigma}_{ii}^{1/2}\right)$.
In particular, Step~$4$ involves computing an approximation $\widehat{y_{i}}$
to the triangular system $\widehat{R_{1}}y_{i}^{(0)}=\widehat{x_{i}}$,
where $\widehat{x_{i}}=\text{fl}\left(\widehat{D}^{-1}\widehat{u_{i}}\widehat{\Sigma}_{ii}^{1/2}\right)$
and \[
\widehat{R_{1}}=\text{fl}\left(\widehat{D}^{-1}\widehat{R}\widehat{D}^{-1}\right)=R_{1}+\delta R_{1},\,\,\,\frac{\left\Vert \delta R_{1}\right\Vert }{\left\Vert R_{1}\right\Vert }\leq\mathcal{O}\left(\epsilon\right).\]
In the above inequality for $\left\Vert \delta R_{1}\right\Vert $,
we used (\ref{eq:bound on computed SVD of Cauchy}). We will also
need the following expression for $R_{1}$, which follows from Lemma~\ref{lem:R_0, for main alg proof}:
\begin{equation}
R_{1}=DR_{0}D^{-1},\,\,\,\kappa\left(R_{1}\right)\leq\kappa^{2}\left(L_{B}\right)+\mathcal{O}\left(\epsilon\right).\label{eq:DR_0D^-{1}}\end{equation}
 Now, recall that $y_{i}^{(1)}$ is the exact solution of $R_{1}y_{i}^{(1)}=x_{i}$,
$x_{i}=D^{-1}u_{i}^{(1)}\widetilde{\Sigma}_{ii}^{1/2}$. Since $\widehat{y_{i}}$
is computed from the system $\widehat{R_{1}}y_{i}^{(0)}=\widehat{x_{i}}$
by backsubstitution, there is a matrix $\delta R_{2}$ such that $\left\Vert \delta R_{2}\right\Vert /\left\Vert R_{2}\right\Vert =\mathcal{O}\left(\epsilon\right)$
and $\left(R_{1}+\delta R_{2}\right)\widehat{y_{i}}=x_{i}+\left(\widehat{x_{i}}-x_{i}\right)$.
Therefore, \begin{eqnarray}
\frac{\left\Vert y_{i}^{(1)}-\widehat{y_{i}}\right\Vert }{\left\Vert y_{i}^{(1)}\right\Vert } & \leq & \kappa\left(R_{1}\right)\left(\frac{\left\Vert \left(\widehat{x_{i}}-x_{i}\right)\right\Vert }{\left\Vert x_{i}\right\Vert }+\mathcal{O}\left(\epsilon\right)\right)\nonumber \\
 & \leq & \kappa^{2}\left(L_{B}\right)\left(\frac{\left\Vert \left(\widehat{x_{i}}-x_{i}\right)\right\Vert }{\left\Vert x_{i}\right\Vert }+\mathcal{O}\left(\epsilon\right)\right),\label{eq:y_i-y_hat, main proof}\end{eqnarray}
where we used (\ref{eq:DR_0D^-{1}}) in the last inequality. To bound
$\left\Vert \left(\widehat{x_{i}}-x_{i}\right)\right\Vert $, we compute
that

\begin{eqnarray*}
\widehat{x_{i}}\left(j\right) & = & \text{fl}\left(\widehat{D}_{jj}^{-1}\widehat{u_{i}}\left(j\right)\widehat{\Sigma}_{ii}^{1/2}\right)=\widehat{D}_{jj}^{-1}\widehat{u_{i}}\left(j\right)\widehat{\Sigma}_{ii}^{1/2}\left(1+\mathcal{O}\left(\epsilon\right)\right)\\
 & = & D_{jj}^{-1}\widehat{u_{i}}\left(j\right)\widetilde{\Sigma}_{ii}^{1/2}\left(\frac{\widehat{\Sigma}_{ii}}{\widetilde{\Sigma}_{ii}}\right)^{1/2}\left(1+\mathcal{O}\left(\epsilon\right)\right)\\
 & = & D_{jj}^{-1}\widehat{u_{i}}\left(j\right)\widetilde{\Sigma}_{ii}^{1/2}\left(1+\left\Vert L_{B}^{-1}\right\Vert ^{2}\kappa\left(L_{B}\right)\mathcal{O}\left(\epsilon\right)\right),\end{eqnarray*}
where (\ref{eq:bound on computed SVD of Cauchy}) is used in the second
equality to bound $D_{jj}/\widehat{D_{jj}}$ and Proposition~\ref{pro:perturb for sing vecs of DBD}
is used in the last equality to bound $\widetilde{\Sigma}_{ii}/\widetilde{\Sigma}_{ii}$.
We also have from Lemma~\ref{lem:R_0, for main alg proof} (with
$B=L^{\text{T}}L+E_{2}$, $\left\Vert E_{2}\right\Vert \leq\rho\mu\psi\mathcal{O}\left(\epsilon\right)$)
that \[
\left\Vert x_{i}\right\Vert ^{-1}=\left\Vert D^{-1}u_{i}^{(1)}\widetilde{\Sigma}_{ii}^{1/2}\right\Vert ^{-1}\leq\left\Vert L^{-1}\right\Vert \kappa\left(L_{B}\right)+\mathcal{O}\left(\epsilon\right).\]
 Therefore, by (\ref{eq:bound 1 for Jacobi, proof of accuracy}),
we have \begin{eqnarray*}
\frac{\left\Vert \widehat{x_{i}}-x_{i}\right\Vert }{\left\Vert x_{i}\right\Vert } & \leq & \frac{\left\Vert D^{-1}\left(\widehat{u_{i}}-u_{i}^{(1)}\right)\widetilde{\Sigma}_{ii}^{1/2}\right\Vert }{\left\Vert D^{-1}u_{i}^{(1)}\widetilde{\Sigma}_{ii}^{1/2}\right\Vert }+\left\Vert L_{B}^{-1}\right\Vert ^{2}\kappa\left(L_{B}\right)\mathcal{O}\left(\epsilon\right)\\
 & \leq & \left(\frac{\nu\left\Vert L^{-1}\right\Vert \kappa\left(L_{B}\right)}{\mbox{relgap}_{i}}+\left\Vert L_{B}^{-1}\right\Vert ^{2}\kappa\left(L_{B}\right)\right)\mathcal{O}\left(\epsilon\right).\end{eqnarray*}
It finally follows from (\ref{eq:y_i-y_hat, main proof}) and the
above inequality that \begin{eqnarray}
\frac{\left\Vert \widehat{y_{i}}-y_{i}^{(1)}\right\Vert }{\left\Vert y_{i}^{(1)}\right\Vert } & = & \left(\frac{\nu\left\Vert L^{-1}\right\Vert \kappa^{3}\left(L_{B}\right)}{\mbox{relgap}_{i}}+\left\Vert L_{B}^{-1}\right\Vert ^{2}\kappa^{3}\left(L_{B}\right)\right)\mathcal{O}\left(\epsilon\right).\label{eq:(y_i-y_hat_i)}\end{eqnarray}

Now, from Lemma~\ref{lem:R0 for main alg proof, 2}, there exists
a right singular vector $v_{i}^{(1)}$ of $\widehat{R}$ such that
$y_{i}^{(1)}=Dv_{i}^{(1)}\widetilde{\Sigma}_{ii}^{-1/2}$. Moreover,
from (\ref{eq:first perturbed matrix, proof of accuracy}), $v_{i}^{(1)}$
is also a right singular vector of $D\left(L^{\text{T}}L+E_{2}\right)D$,
$\left\Vert E_{2}\right\Vert \leq\rho\mu\psi\mathcal{O}\left(\epsilon\right)$.
Therefore, Proposition~\ref{pro:perturb for sing vecs of DBD} ensures
that there is a right singular vector $v_{i}$ of $G=D\left(L^{\text{T}}L\right)D$
such that \begin{eqnarray}
\left\Vert D^{-1}\left(v_{i}-v_{i}^{(1)}\right)\widetilde{\Sigma}_{ii}^{1/2}\right\Vert  & \leq & \frac{\mu_{2}\left(L_{B}\right)}{\mbox{relgap}_{i}}\mathcal{O}\left(\left\Vert E_{2}\right\Vert \right)\leq\frac{\rho\mu\psi\mu_{2}\left(L_{B}\right)}{\mbox{relgap}_{i}}\mathcal{O}\left(\epsilon\right),\label{eq:bound 2 for Jacobi, proof of accuracy}\end{eqnarray}
We also have from Proposition~\ref{pro:perturb for sing vecs of DBD}
that $\left\Vert D^{-1}v_{i}^{(1)}\widetilde{\Sigma}_{ii}^{1/2}\right\Vert ^{-1}\leq\left\Vert L^{-1}\right\Vert $.
Therefore, letting $y_{i}=D^{-1}v_{i}\Sigma_{ii}^{1/2}$, it follows
from (\ref{eq:bound 2 for Jacobi, proof of accuracy}) that \begin{eqnarray}
\frac{\left\Vert y_{i}-y_{i}^{(1)}\right\Vert }{\left\Vert y_{i}\right\Vert } & = & \frac{\left\Vert y_{i}-y_{i}^{(1)}\right\Vert }{\left\Vert y_{i}^{(1)}\right\Vert }\frac{\left\Vert y_{i}^{(1)}\right\Vert }{\left\Vert y_{i}\right\Vert }\label{eq:y_i-y_tilde_i}\\
 & \leq & \left\Vert L^{-1}\right\Vert \frac{\rho\mu\psi\mu_{2}\left(L_{B}\right)}{\mbox{relgap}_{i}}\mathcal{O}\left(\epsilon\right),\end{eqnarray}
where we use that $\left\Vert y_{i}^{(1)}\right\Vert =\left\Vert y_{i}\right\Vert +\mathcal{O}\left(\epsilon\right)$
and $\widetilde{\Sigma_{ii}}^{1/2}=\Sigma_{ii}^{1/2}+\mathcal{O}\left(\epsilon\right)$.
Combining (\ref{eq:y_i-y_tilde_i}) and (\ref{eq:(y_i-y_hat_i)}),
we finally obtain\[
\frac{\left\Vert y_{i}-\widehat{y_{i}}\right\Vert }{\left\Vert y_{i}\right\Vert }\leq\left(\frac{\mu_{3}\left(L_{B}\right)}{\mbox{relgap}_{i}}+\left\Vert L_{B}^{-1}\right\Vert ^{2}\kappa^{3}\left(L_{B}\right)\right)\mathcal{O}\left(\epsilon\right),\]
where \[
\mu_{3}\left(L_{B}\right)=\left\Vert L^{-1}\right\Vert \left(\rho\mu\psi\mu_{2}\left(L_{B}\right)+\nu\kappa^{3}\left(L_{B}\right)\right).\]

In the final step of the algorithm, we compute an approximation $\widehat{z_{i}}=\text{fl}\left(\widehat{L}\widehat{y_{i}}\right)$
to the true con-eigenvector $z_{i}=Ly_{i}$. From (\ref{eq:bound on computed SVD of Cauchy}),
we have $\widehat{z_{i}}=L\widehat{y_{i}}+e_{i}$, where $\left\Vert e_{i}\right\Vert \leq\left\Vert L\right\Vert \left\Vert \widehat{y_{i}}\right\Vert \mathcal{O}\left(\epsilon\right)$.
Therefore, we obtain\begin{eqnarray*}
\frac{\left\Vert z_{i}-\widehat{z_{i}}\right\Vert }{\left\Vert z_{i}\right\Vert } & \leq & \frac{\left\Vert Ly_{i}-L\widehat{y_{i}}\right\Vert }{\left\Vert Ly_{i}\right\Vert }+\left\Vert L\right\Vert \frac{\left\Vert \widehat{y_{i}}\right\Vert }{\left\Vert L\widehat{y_{i}}\right\Vert }\mathcal{O}\left(\epsilon\right)\\
 & \leq & \left\Vert L\right\Vert \left\Vert L^{-1}\right\Vert \left(\frac{\left\Vert y_{i}-\widehat{y_{i}}\right\Vert }{\left\Vert y_{i}\right\Vert }+\mathcal{O}\left(\epsilon\right)\right)\\
 & \leq & \kappa\left(L\right)\left(\frac{\mu_{3}\left(L_{B}\right)}{\mbox{relgap}_{i}}+\left\Vert L_{B}^{-1}\right\Vert ^{2}\kappa^{3}\left(L_{B}\right)\right)\mathcal{O}\left(\epsilon\right).\end{eqnarray*}

\end{proof}

\subsection{Proof of Theorem~\ref{thm:truncation theorem}}

We only prove the error bounds for the computed con-eigenvector components
in Theorem~\ref{thm:truncation theorem} (the error bounds for the
computed con-eigenvalues follow in a similar fashion).

We need the following well-known result describing the sensitivity
of the eigenvalue problem for diagonalizable matrices. 
\begin{lem}
\label{lem:standard perturbation eigenvector}Assuming that $A$ has
simple eigenvalues, we consider its perturbation $A+E$. Let $Z_{0}=\left(\begin{array}{ccc}
z_{1} & \dots & z_{n}\end{array}\right)$ denote a matrix of unit eigenvectors of $A$, with corresponding
eigenvalues $\lambda_{1},\ldots,\lambda_{n}$. Then, for small enough
$\left\Vert E\right\Vert $, the $i$th unit eigenvectors $z_{i}$
and $\widetilde{z_{i}}$ of $A$ and $\widetilde{A}$ may be chosen
so that \[
\left\Vert z_{i}-\widetilde{z_{i}}\right\Vert \leq\kappa\left(Z_{0}\right)\frac{\mathcal{O}\left(\left\Vert E\right\Vert \right)}{\text{absgap}_{i}},\,\,\,\mbox{where}\,\,\,\text{absgap}_{i}=\min_{j\neq i}\left|\lambda_{i}-\lambda_{j}\right|.\]

\end{lem}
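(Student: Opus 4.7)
The plan is to diagonalize $A$ and expand the perturbed eigenvector in the eigenbasis of $A$, then read off the expansion coefficients using the dual (left-eigenvector) basis. Since the eigenvalues are simple, write $A = Z_0 \Lambda Z_0^{-1}$ with $\Lambda = \text{diag}(\lambda_1,\ldots,\lambda_n)$, and let $W = Z_0^{-*}$, so its columns $w_j$ satisfy $w_j^* A = \lambda_j w_j^*$ and the biorthogonality relation $w_j^* z_k = \delta_{jk}$. Immediately, $\|w_j\|_2 \le \|Z_0^{-1}\|$ for each $j$, while $\|z_k\|_2 = 1$ by hypothesis.

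Next, for $\|E\|$ sufficiently small, the $i$th perturbed eigenvalue $\widetilde{\lambda_i}$ remains isolated (this is where I would invoke a standard Bauer--Fike-type argument, or equivalently a resolvent continuity argument, to justify that $\widetilde{z_i}$ depends smoothly on $E$ in a neighbourhood of $0$). Normalize $\widetilde{z_i}$ by $w_i^* \widetilde{z_i} = 1$, so that $\widetilde{z_i} - z_i = \sum_{j \neq i} c_{ji}\, z_j + O(\|E\|^2)$. Substituting into $(A+E)\widetilde{z_i} = \widetilde{\lambda_i}\widetilde{z_i}$ and subtracting $A z_i = \lambda_i z_i$ yields, to first order in $E$,
\[
  A(\widetilde{z_i} - z_i) + E z_i = \lambda_i(\widetilde{z_i}-z_i) + (\widetilde{\lambda_i}-\lambda_i) z_i + O(\|E\|^2).
\]
Applying $w_j^*$ for $j \neq i$ kills the $z_i$ term on the right and turns the left side into $(\lambda_j - \lambda_i) c_{ji}$, giving the explicit formula $c_{ji} = w_j^* E z_i / (\lambda_i - \lambda_j)$.

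Summing and using $\|z_j\|_2 = 1$, $\|w_j\|_2 \le \|Z_0^{-1}\|$, and $\|Z_0\| \ge 1$, I bound
\[
  \|\widetilde{z_i} - z_i\|_2 \le \sum_{j \neq i}\frac{\|w_j\|_2 \|E\| \|z_i\|_2}{|\lambda_i-\lambda_j|} + O(\|E\|^2) \le \frac{(n-1)\|Z_0^{-1}\|\,\|E\|}{\text{absgap}_i} + O(\|E\|^2),
\]
which is absorbed into $\kappa(Z_0)\, \mathcal{O}(\|E\|)/\text{absgap}_i$ since $\|Z_0\| \ge 1$ and the factor $n-1$ is a constant depending only on $n$ (as permitted by the $\mathcal{O}$ convention adopted in Section~\ref{sec:Accuracy-and-perturbation}).

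I expect the main technical obstacle to be justifying the linearization, i.e., establishing that for $\|E\|$ small enough the perturbed eigenvector is indeed analytic in $E$ with the leading-order expansion above. The cleanest route is the Riesz projector $P_i = \frac{1}{2\pi i}\oint_{\Gamma_i}(A-\mu I)^{-1}\, d\mu$ on a small circle $\Gamma_i$ around $\lambda_i$ of radius $\text{absgap}_i/2$; its perturbation $\widetilde{P_i}$ differs from $P_i$ by $\mathcal{O}(\|E\|/\text{absgap}_i)$, and $\widetilde{z_i}$ can be taken as a normalized $\widetilde{P_i} z_i$. Once this is in place, the rank-one expansion of the coefficients $c_{ji}$ above is immediate and the stated bound follows.
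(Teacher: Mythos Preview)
The paper does not actually prove this lemma; it introduces it as ``the following well-known result'' and gives no argument in the compiled text (a brief sketch appears only inside a commented-out block in the \LaTeX\ source). Your proof is the standard first-order perturbation computation and is correct: the key identity $c_{ji} = w_j^* E z_i/(\lambda_i-\lambda_j)$ obtained via biorthogonality, together with $\|w_j\|\le\|Z_0^{-1}\|$ and $\|z_i\|=1$, yields exactly the claimed bound with the $n$-dependent constant absorbed into the $\mathcal{O}(\cdot)$. Your Riesz-projector remark is the right way to justify analyticity and the first-order expansion rigorously; the paper's commented-out sketch takes the equivalent route of differentiating $A(t)r_i(t)=\lambda_i(t)r_i(t)$ along $A(t)=A+tE$ and reading off the same coefficients.
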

%
{}~~~

The next result shows that the matrix of eigenvectors of $\overline{C}C$
is well-conditioned. Recall that $\mathcal{O}\left(1\right)$ denotes
a constant that depends on $n$ only ($C$ has dimensions $n\times n$).
\begin{lem}
\label{lem:bound of eigenvector matrix}Let $C$ denote a positive-definite
Cauchy matrix, and let $Z_{0}$ denote the matrix of unit eigenvectors
of $\overline{C}C$. Then we have $\kappa\left(Z_{0}\right)\leq\kappa\left(L\right)\mu_{1}^{2}\left(L_{B}\right)\mathcal{O}\left(1\right)$.\end{lem}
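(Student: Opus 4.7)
The plan is to use the explicit factorization of the eigenvector matrix of $\overline{C}C$ that emerges from the derivation in Section~\ref{sub:Derivation-of-algorithm}. Writing $C=(PL)D^{2}(PL)^{*}$ for the Cholesky decomposition with complete pivoting, letting $V$ be the unitary matrix of right singular vectors of $G=D(L^{\text{T}}L)D$ and $\Sigma$ the corresponding singular-value matrix, the (unnormalized) matrix of eigenvectors of $\overline{C}C$ may be taken as $Z=PLDV\Sigma^{-1/2}$ (up to a complex conjugation, which does not affect Frobenius norms); correspondingly $Z^{-1}=\Sigma^{1/2}V^{*}D^{-1}L^{-1}P^{\text{T}}$. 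Writing the matrix of unit eigenvectors as $Z_{0}=ZM$ with $M$ diagonal and $M_{ii}=1/\|z_{i}\|$, one has $\|Z_{0}\|=\sqrt{n}$ trivially, so the bound on $\kappa(Z_{0})$ reduces to controlling $\|Z_{0}^{-1}\|=\|M^{-1}Z^{-1}\|$. The elementary identity $\|M^{-1}Z^{-1}\|^{2}=\sum_{i}\|z_{i}\|^{2}\|Z^{-1}(i,:)\|^{2}$ yields at once
\[
\|Z_{0}^{-1}\|\leq\bigl(\max_{i}\|z_{i}\|\bigr)\,\|Z^{-1}\|.
\]

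For both factors the key input is the component-wise bound from Proposition~\ref{pro:perturb for sing vecs of DBD}, namely $|v_{i}(j)|\leq\mu_{1}(L_{B})\min\{D_{jj}/\Sigma_{ii}^{1/2},\,\Sigma_{ii}^{1/2}/D_{jj}\}$. This simultaneously yields $|(Dv_{i}\Sigma_{ii}^{-1/2})_{j}|\leq\mu_{1}(L_{B})\min\{D_{jj}^{2}/\Sigma_{ii},1\}\leq\mu_{1}(L_{B})$ and $|(\Sigma^{1/2}V^{*}D^{-1})_{ij}|=\Sigma_{ii}^{1/2}|v_{i}(j)|/D_{jj}\leq\mu_{1}(L_{B})\min\{1,\Sigma_{ii}/D_{jj}^{2}\}\leq\mu_{1}(L_{B})$, so that \emph{every} entry of $DV\Sigma^{-1/2}$ and of $\Sigma^{1/2}V^{*}D^{-1}$ is bounded by $\mu_{1}(L_{B})$, entirely independent of the exponentially graded diagonal of $D$. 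Entry-wise, this gives $\|Dv_{i}\Sigma_{ii}^{-1/2}\|\leq\sqrt{n}\,\mu_{1}(L_{B})$ and $\|\Sigma^{1/2}V^{*}D^{-1}\|\leq n\,\mu_{1}(L_{B})$.

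Combining these with complete pivoting, which forces $\|L\|=\mathcal{O}(1)$, gives $\max_{i}\|z_{i}\|\leq\|L\|\sqrt{n}\,\mu_{1}(L_{B})=\mu_{1}(L_{B})\,\mathcal{O}(1)$ and $\|Z^{-1}\|\leq\|\Sigma^{1/2}V^{*}D^{-1}\|\,\|L^{-1}\|\leq n\,\mu_{1}(L_{B})\,\|L^{-1}\|$. Since $\|L^{-1}\|=\kappa(L)/\|L\|=\kappa(L)\,\mathcal{O}(1)$, we conclude $\|Z_{0}^{-1}\|\leq\kappa(L)\,\mu_{1}^{2}(L_{B})\,\mathcal{O}(1)$, and therefore $\kappa(Z_{0})=\|Z_{0}\|\,\|Z_{0}^{-1}\|\leq\kappa(L)\,\mu_{1}^{2}(L_{B})\,\mathcal{O}(1)$, as claimed.

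The subtle point worth flagging is correctly accounting for the power of $\kappa(L)$. A more naive route via $\kappa(Z_{0})\leq\kappa(Z)\cdot(\max\|z_{i}\|/\min\|z_{i}\|)$ would force one to use Proposition~\ref{pro:perturb for sing vecs of DBD}'s lower bound on $\|Dv_{i}\Sigma_{ii}^{-1/2}\|$, which is only $\|B\|^{1/2}$, and would introduce an extra factor of $\kappa(L)$. The remedy, as above, is to absorb this into the universal identity $\|Z_{0}\|=\sqrt{n}$ by bounding $\|Z_{0}^{-1}\|$ directly through the weighted Frobenius identity, rather than by bounding $\|Z\|$, $\|Z^{-1}\|$, and the column-norm spread of $Z$ separately.
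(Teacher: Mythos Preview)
Your proof is correct and follows essentially the same route as the paper. Both arguments use the factorization $Z=P\overline{L}\bigl(D\overline{V}\Sigma^{-1/2}\bigr)$, invoke the component-wise bound of Proposition~\ref{pro:perturb for sing vecs of DBD} to control $\|DV\Sigma^{-1/2}\|$ and $\|D^{-1}V\Sigma^{1/2}\|$ by $\mu_{1}(L_{B})$, bound $\|Z_{0}^{-1}\|$ via $\bigl(\max_{i}\|z_{i}\|\bigr)\|Z^{-1}\|$ (the paper writes this as $\|\Omega\|\,\|Z^{-1}\|$ using submultiplicativity, which is the same thing), and finish with $\|Z_{0}\|=\sqrt{n}$.
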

\begin{proof}
From Section~\ref{sub:Derivation-of-algorithm}, we know that the
matrix $Z$ of (unnormalized) eigenvectors of $\overline{C}C$ is
given by $Z=P\overline{L}\left(D\overline{V}\Sigma^{-1/2}\right)$,
where $V$ is the matrix of right singular vectors of $D\left(L^{\text{T}}L\right)D$
and $L$ is the lower triangular matrix in the Cholesky factorization
$C=\left(PL\right)D^{2}\left(PL\right)^{*}$. Now define the matrix,
$Z_{0}$, of normalized eigenvectors $Z_{0}=Z\Omega^{-1}$, where
$\Omega_{ii}=\left\Vert z_{i}\right\Vert $. 

By Proposition~\ref{pro:perturb for sing vecs of DBD}, we have that
\[
\max\left\{ \left\Vert DV\Sigma^{-1/2}\right\Vert ,\left\Vert D^{-1}V\Sigma^{1/2}\right\Vert \right\} \leq\mu_{1}\left(L_{B}\right),\]
where $\mu_{1}\left(L_{B}\right)$ is defined in (\ref{eq:constants for bounds in proof})
and $\Sigma^{2}$ denotes the matrix of eigenvalues of $\overline{C}C$.
Therefore, $\left\Vert \Omega\right\Vert \leq\sqrt{n}\left\Vert Z\right\Vert \leq\sqrt{n}\left\Vert L\right\Vert \mu_{1}\left(L_{B}\right)$.
Also,\begin{eqnarray*}
\left\Vert Z^{-1}\right\Vert  & = & \left\Vert \left(Z^{-1}\right)^{*}\right\Vert =\left\Vert \left(\left(\Sigma^{1/2}\overline{V}^{-1}D^{-1}\right)\left(P\overline{L}\right)^{-1}\right)^{*}\right\Vert \\
 & \leq & \left\Vert L^{-1}\right\Vert \left\Vert \left(\Sigma^{1/2}\overline{V}^{-1}D^{-1}\right)^{*}\right\Vert =\left\Vert L^{-1}\right\Vert \left\Vert \left(D^{-1}\overline{V}\Sigma^{1/2}\right)\right\Vert \leq\left\Vert L^{-1}\right\Vert \mu_{1}\left(L_{B}\right),\end{eqnarray*}
where we used that $\overline{V}^{-1}=\overline{V}^{*}$ in the last
equality. It follows that\[
\left\Vert Z_{0}^{-1}\right\Vert \leq\left\Vert \Omega\right\Vert \left\Vert Z^{-1}\right\Vert \leq\sqrt{n}\kappa\left(L\right)\mu_{1}^{2}\left(L_{B}\right).\]
Finally, using the above inequality and the bound $\left\Vert Z_{0}\right\Vert \leq\sqrt{n}$
(recall that the column norms of $Z_{0}$ are unity), \[
\kappa\left(Z_{0}\right)\leq n\kappa\left(L\right)\mu_{1}^{2}\left(L_{B}\right).\]
 
\end{proof}
The next lemma is the key to proving Theorem~\ref{thm:truncation theorem}.
\begin{prop}
\label{pro:perturbation from partial cholesky}Suppose that Algorithm~\ref{Flo:Cauchy factorization alg-1}
produces, in exact arithemtic, the partial Cholesky factorization
$\widetilde{C}=\left(\widetilde{P}\widetilde{L}\right)\widetilde{D}^{2}\left(\widetilde{P}\widetilde{L}\right)$,
where $\widetilde{P}$ has dimension $m\times n$, $\widetilde{L}$
has dimension $n\times m$, and $\widetilde{D}$ has dimension $m\times m$.
Also assume that $\widetilde{D}_{mm}^{2}\leq\epsilon\Sigma_{ii}^{2}$
for some (simple) eigenvalue $\Sigma_{ii}^{2}$ of $\overline{C}C$
($1\leq i\leq m$). 

Then the $i$th unit eigenvectors $z_{i}$ and $\widetilde{z_{i}}$
of \textup{$\overline{C}C$ and $\overline{\widetilde{C}}\widetilde{C}$
may be chosen so that} \[
\left\Vert \widehat{z_{i}}-\widetilde{z_{i}}\right\Vert \leq\frac{\left\Vert C\right\Vert \kappa\left(L\right)\mu_{1}^{2}\left(L_{B}\right)}{\mbox{relgap}_{i}}\mathcal{O}\left(\epsilon\right),\,\,\,\mbox{where}\,\,\,\mbox{relgap}_{i}=\min_{j\neq i}\frac{\left|\Sigma_{ii}-\Sigma_{jj}\right|}{\Sigma_{ii}+\Sigma_{jj}},\]
where $\mu_{1}\left(L_{B}\right)$ is defined in (\ref{eq:constants for bounds in proof}).\end{prop}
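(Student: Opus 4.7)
The plan is to treat $\widetilde{C}$ as a norm-wise perturbation of $C$, transfer this to a perturbation of $\overline{C}C$, and then apply the standard eigenvector perturbation bound in Lemma~\ref{lem:standard perturbation eigenvector} together with the conditioning estimate for $Z_0$ in Lemma~\ref{lem:bound of eigenvector matrix}. Nothing subtle like graded-perturbation theory is needed here, because we only have to prove an ordinary norm-wise bound on $\|z_i-\widetilde{z_i}\|$.

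First I would bound $\|C-\widetilde{C}\|$. Write the full (exact) Cholesky factorization $C=(PL)D^2(PL)^*$ and split the factors into their leading $m$ and trailing $n-m$ columns, $L=[L_1,L_2]$ and $D=\mathrm{diag}(D_1,D_2)$. Then $\widetilde{C}=(PL_1)D_1^2(PL_1)^*$ and so
\[
C-\widetilde{C}=(PL_2)\,D_2^{\,2}\,(PL_2)^{*}.
\]
Complete pivoting forces the diagonal of $D$ to be non-increasing, so $\|D_2\|^2\le n\,\widetilde{D}_{mm}^{\,2}\le\mathcal{O}(\epsilon)\Sigma_{ii}^{\,2}$ by hypothesis, and complete pivoting also yields $\|L_2\|\le\|L\|=\mathcal{O}(1)$. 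Hence $\|C-\widetilde{C}\|\le\mathcal{O}(\epsilon)\Sigma_{ii}^{\,2}$. The elementary telescoping identity $\overline{C}C-\overline{\widetilde{C}}\widetilde{C}=\overline{C}(C-\widetilde{C})+(\overline{C}-\overline{\widetilde{C}})\widetilde{C}$ then gives
\[
\|\overline{C}C-\overline{\widetilde{C}}\widetilde{C}\|\le(\|C\|+\|\widetilde{C}\|)\,\|C-\widetilde{C}\|\le\mathcal{O}(\epsilon)\|C\|\Sigma_{ii}^{\,2}.
\]

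Next I would relate the absolute eigenvalue gap of $\overline{C}C$ at index $i$ to $\mathrm{relgap}_i$. The eigenvalues of $\overline{C}C$ are $\Sigma_{jj}^{\,2}$, and factoring $\Sigma_{ii}^{\,2}-\Sigma_{jj}^{\,2}=(\Sigma_{ii}-\Sigma_{jj})(\Sigma_{ii}+\Sigma_{jj})$ shows
\[
\min_{j\neq i}\bigl|\Sigma_{ii}^{\,2}-\Sigma_{jj}^{\,2}\bigr|\ge\mathrm{relgap}_i\cdot\Sigma_{ii}^{\,2},
\]
since $(\Sigma_{ii}+\Sigma_{jj})^2/\Sigma_{ii}^{\,2}\ge 1$. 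Applying Lemma~\ref{lem:standard perturbation eigenvector} with $A=\overline{C}C$ and $E=\overline{\widetilde{C}}\widetilde{C}-\overline{C}C$, and using Lemma~\ref{lem:bound of eigenvector matrix} to bound $\kappa(Z_0)\le\kappa(L)\mu_1^{\,2}(L_B)\,\mathcal{O}(1)$, I get
\[
\|z_i-\widetilde{z_i}\|\le\kappa(Z_0)\,\frac{\mathcal{O}(\epsilon)\|C\|\Sigma_{ii}^{\,2}}{\mathrm{relgap}_i\cdot\Sigma_{ii}^{\,2}}=\frac{\kappa(L)\mu_1^{\,2}(L_B)\|C\|}{\mathrm{relgap}_i}\mathcal{O}(\epsilon),
\]
which is exactly the asserted bound.

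The main, really the only, point requiring care is the gap-conversion in Step~3: the factor $\Sigma_{ii}^{\,2}$ coming out of $\|E\|$ must exactly cancel the $\Sigma_{ii}^{\,2}$ coming from the absolute gap of squared singular values. Without that cancellation one would end up with $\Sigma_{ii}^{\,2}/\mathrm{relgap}_i$ in place of $\|C\|/\mathrm{relgap}_i$, and since $\Sigma_{ii}$ may be exponentially smaller than $\|C\|$, the resulting bound would be useless for large indices $i$. Everything else (the Cholesky splitting, the norm estimates using complete pivoting, and the telescoping identity) is routine and contributes only the benign constants absorbed into the $\mathcal{O}$-notation.
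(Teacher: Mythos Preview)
Your argument is correct and follows essentially the same route as the paper: bound $\|C-\widetilde C\|$ by $D_{mm}^{2}\,\mathcal O(1)$, pass to $\|\overline C C-\overline{\widetilde C}\widetilde C\|\le \|C\|D_{mm}^{2}\,\mathcal O(1)$, then apply Lemma~\ref{lem:standard perturbation eigenvector} together with the bound $\kappa(Z_0)\le\kappa(L)\mu_1^{2}(L_B)\,\mathcal O(1)$ from Lemma~\ref{lem:bound of eigenvector matrix}, and finally convert the absolute gap of the squared singular values into $\mathrm{relgap}_i\cdot\Sigma_{ii}^{2}$ so that the factor $\Sigma_{ii}^{2}$ cancels. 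The only cosmetic difference is in the first step: the paper bounds $C-\widetilde C$ by observing that, after permutation, it equals the Schur-complement block $F_{22}F_{22}^{*}$ with $|F_{22}(i,j)|\le D_{mm}$, whereas you write it as the Cholesky tail $(PL_2)D_2^{2}(PL_2)^{*}$ and use $\|D_2\|^{2}\le nD_{mm}^{2}$ and $\|L_2\|=\mathcal O(1)$; both yield the same $D_{mm}^{2}\,\mathcal O(1)$ bound.
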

\begin{proof}
After $m$ steps of Gaussian elimination with complete pivoting, \[
\widetilde{P}^{\text{T}}C\widetilde{P}=\left(\begin{array}{cc}
F_{11} & 0\\
F_{21} & F_{22}\end{array}\right)\left(\begin{array}{cc}
F_{11}^{*} & F_{21}^{*}\\
0 & F_{22}^{*}\end{array}\right)=\left(\begin{array}{cc}
F_{11}F_{11}^{*} & F_{11}F_{21}^{*}\\
F_{21}F_{11}^{*} & G_{22}\end{array}\right),\]
where $G_{22}=F_{21}F_{21}^{*}+F_{22}F_{22}^{*}$ and $\left|F_{22}\left(i,j\right)\right|\leq F_{11}\left(m,m\right)$,
$i,j=m+1,\ldots,n$ (recall that $C>0$). Now, $\widetilde{L}\widetilde{D}=\left(\begin{array}{cc}
F_{11} & F_{21}\end{array}\right)^{\text{T}}$ and \[
\widetilde{P}^{\text{T}}\widetilde{C}\widetilde{P}=\left(\begin{array}{c}
F_{11}\\
F_{21}\end{array}\right)\left(\begin{array}{cc}
F_{11}^{*} & F_{21}^{*}\end{array}\right)=\left(\begin{array}{cc}
F_{11}F_{11}^{*} & F_{11}F_{21}^{*}\\
F_{21}F_{11}^{*} & F_{21}F_{21}^{*}\end{array}\right).\]
Therefore, since $D_{mm}=F_{11}\left(m,m\right)$, \[
\left\Vert C-\widetilde{C}\right\Vert =\left\Vert P_{1}^{\text{T}}CP_{1}-P_{1}^{\text{T}}\widetilde{C}P_{1}\right\Vert \leq D_{mm}^{2}\mathcal{O}\left(1\right),\]
and \[
\left\Vert \overline{C}C-\overline{\widetilde{C}}\widetilde{C}\right\Vert \leq\left\Vert C\right\Vert D_{mm}^{2}\mathcal{O}\left(1\right).\]
It follows from Lemmas~\ref{lem:standard perturbation eigenvector}~and~\ref{lem:bound of eigenvector matrix}
that the $i$th eigenvectors $z_{i}$ and $\widetilde{z_{i}}$ of
$\overline{C}C$ and $\overline{\widetilde{C}}\widetilde{C}$ may
be chosen so that \begin{eqnarray*}
\left\Vert \widehat{z_{i}}-\widetilde{z_{i}}\right\Vert  & \leq & \max_{j\neq i}\frac{1}{\left|\Sigma_{ii}^{2}-\Sigma_{jj}^{2}\right|}\kappa\left(Z\right)\left\Vert C\right\Vert D_{mm}^{2}.\\
 & \leq & \frac{\kappa\left(Z\right)}{\mbox{relgap}_{i}}\left\Vert C\right\Vert \frac{D_{mm}^{2}}{\Sigma_{i}^{2}}\leq\frac{\left\Vert C\right\Vert \kappa\left(L\right)\mu_{1}^{2}\left(L_{B}\right)}{\mbox{relgap}_{i}}\mathcal{O}\left(\epsilon\right).\end{eqnarray*}
In the last inequality, we used that (see \cite{MATHIA:1997})\[
\max_{j\neq i}\frac{\Sigma_{ii}^{2}+\Sigma_{jj}^{2}}{\left|\Sigma_{ii}^{2}-\Sigma_{jj}^{2}\right|}\leq\max_{j\neq i}\frac{\Sigma_{ii}+\Sigma_{jj}}{\left|\Sigma_{ii}-\Sigma_{jj}\right|}.\]
 %
{}
\end{proof}
~~~

Finally, we are ready to prove Theorem~\ref{thm:truncation theorem}.
\begin{proof}
The proof of Theorem~\ref{thm:high accuracy of algorithm-1} shows
that the $i$th computed unit eigenvector $\widehat{\widetilde{z_{i}}}$
of $\overline{\widetilde{C}}\widetilde{C}$, $\widetilde{C}=\left(\widetilde{P}\widetilde{L}\right)\widetilde{D}^{2}\left(\widetilde{P}\widetilde{L}\right)^{\text{T}}$,
is close to an exact unit eigenvector $\widetilde{z_{i}}$, i.e.,
\[
\left\Vert \widehat{z_{i}}-\widetilde{z_{i}}\right\Vert \leq\kappa\left(L\right)\left(\frac{\mu_{3}\left(L_{B}\right)}{\mbox{relgap}_{i}}+\left\Vert L^{-1}\right\Vert ^{2}\kappa^{3}\left(L_{B}\right)\right)\mathcal{O}\left(\epsilon\right),\]
where $\mu_{3}\left(L_{B}\right)$ is defined in (\ref{eq:constants for bounds, 2}).
Also, Proposition~\ref{pro:perturbation from partial cholesky} implies
that there is an exact unit unit eigenvector $z_{i}$ of $\overline{C}C$
such that \[
\left\Vert \widetilde{z_{i}}-z_{i}\right\Vert \leq\frac{\left\Vert C\right\Vert \kappa\left(L\right)\mu_{1}^{2}\left(L_{B}\right)}{\mbox{relgap}_{i}}\mathcal{O}\left(\epsilon\right).\]
The claim follows from the above inequalities.
\end{proof}

\section{Comparison with related approaches for constructing optimal rational
approximations\label{sec:Comparison-with-related}}

Numerical approaches for finding near optimal rational approximations
originate in theoretical results of Adamyan, Arov, and Krein \cite{AD-AR-KR:1968,AD-AR-KR:1968a,AD-AR-KR:1971}.
In particular, given a periodic function $f\left(e^{2\pi ix}\right)\in L^{\infty}(0,1)$,
AAK theory yields an optimal {}``rational-like'' approximation $r_{M}\left(e^{2\pi ix}\right)$,\begin{equation}
r_{M}\left(z\right)=\frac{a_{0}+a_{1}z+a_{2}z^{2}+\dots}{\left(z-\zeta_{1}\right)\dots\left(z-\zeta_{M}\right)},\,\,\,\,\left|\zeta_{j}\right|<1,\label{eq:rational-like approx, AAK}\end{equation}
constructed from the left and right singular vectors corresponding
to the $M$th singular value, $\sigma_{M}$, of the infinite Hankel
matrix $H_{ij}=\hat{f}\left(i+j-1\right),\,\,\, i,j=1,2,\ldots$.
The numerator of $r_{M}\left(z\right)$ in (\ref{eq:rational-like approx, AAK})
is analytic in the unit disk. The approximation error satisfies \[
\max_{x}\left|f\left(e^{2\pi ix}\right)-r_{M}\left(e^{2\pi ix}\right)\right|=\sigma_{M},\]
where the number of poles $\zeta_{j}$ in (\ref{eq:rational-like approx, AAK})
equals the index $M$ of the singular value $\sigma_{M}$ (index counting
starts from zero). Moreover, the $L^{\infty}$-norm approximation
error is optimal among all functions of the form (\ref{eq:rational-like approx, AAK}).

In order to use AAK theory to compute near optimal rational approximations,
standard numerical approaches compute singular vectors of a truncated
 Hankel matrix. The poles of the rational approximation are obtained
as roots of a polynomial whose coefficients are the entries of the
singular vector. Such approaches have a long history of their own
and, in particular, let us mention the pioneering papers \cite{TREFET:1981,TRE-GUT:1983,TREFET:1983}.
A recent version (incorporating additional ideas) can be found in
\cite{DEU-TRE:2011}.

Instead of truncating the Hankel matrix, the approach of this paper
is based on the observation that it is always possible (see e.g. \cite{BEY-MON:2005,BEY-MON:2010,BE-LE-MO:2011p,D-B-H-M:2012p})
to construct a sub-optimal rational approximation, i.e., an approximation
with excessive number of poles for a desired accuracy. This leads
us to specialize AAK theory to proper rational functions $f\left(e^{2\pi ix}\right)$,
and to formulate the reduction problem (see Section~\ref{sub:Reduction-procedure}
and \cite[Section 6]{BEY-MON:2005}). Importantly, this results in
a con-eigenvalue problem of finite size and with no additional approximations.
Moreover, this formulation allows us to develop a numerical calculus
based on rational functions (numerical operations such as addition
and multiplication increase the number of poles; the reduction algorithm
is applied to keep their number near optimally small, see \cite{HA-BE-MO:2012}).
Early approaches of this type can be found in \cite{KUNG:1980,DE-GE-KA:1981,YOUNG:1983};
however, these algorithms may require extended precision for high
accuracy and also scale cubically in the number of original poles.

Comparing our approach with that in e.g. \cite{DEU-TRE:2011}, we
make two observations. First, to justify the truncation of an infinite
Hankel matrix, the Fourier coefficients have to decay below the desired
accuracy of approximation. Thus, for functions that have sharp transitions
(as in the example of Section~$3.2$) or singularities (as in the
example of Section~$3.1$), where the Fourier coefficients decay
slowly, this would require computing singular values of very large
matrices. In the examples of Sections~$3.1$~and~$3.2$, Hankel
matrices of size $\approx10^{7}\times10^{7}$ and $\approx10^{6}\times10^{6}$
would be needed in order to attain a comparable accuracy. This approach
would also require finding roots of polynomials with $\approx10^{7}$
and $\approx10^{6}$ coefficients, respectively.

Our second observation is that using Hankel matrices may require extended
precision arithmetic if high accuracy is desired, as is the case in
examples of Sections~$3.1$~and~$3.2$. Indeed, existing SVD algorithms
do not accurately compute small singular values of Hankel matrices.
Also, the roots of high degree polynomials (determined at the SVD
step) may be sensitive to perturbations in their coefficients. However,
when limited to approximating smooth functions, these {}``truncated
Hankel'' methods can yield surprisingly high accuracy since the errors
in the poles may be compensated by the residues. As far as we are
aware, truncated Hankel methods for constructing optimal rational
approximations for functions with singularities generally do not achieve
approximation errors better than $\approx10^{-4}$. In contrast, in
Section~$3.1$ we show that the reduction algorithm approximates
piecewise smooth functions with errors close to machine precision.

We also note that the results in \cite{HA-BE-MO:2012} (illustrated
in Section~$3.2$) demonstrate an effective numerical calculus based
on the reduction algorithm, capable of computing highly accurate solutions
to viscous Burgers' equation for viscosity as small as $10^{-5}$.
These solutions exhibit moving transitions regions of width $\approx10^{-5}$,
and computing them with high accuracy over long time intervals is
a nontrivial task for any numerical method. The con-eigenvalue algorithm
of this paper is critical to the high accuracy and efficiency of this
numerical calculus.

\section{Background on algorithms for high relative accuracy\label{sec:Background-on-algorithms}}

Here we provide necessary background on computing highly accurate
SVDs, as well some error bounds that are needed for the analysis of
the con-eigenvalue algorithm. Although the results we need in \cite{DEM-VES:1992,MATHIA:1996,D-G-E-S-V-D:1997,MATHIA:1997,DEMMEL:1999,HIGHAM:2000}
are only stated there for real-valued matrices, they carry over to
complex-valued matrices with minor modifications and are formulated
as such.

\subsection{\label{sub:Highly-accurate-SVDs}Accurate SVDs of matrices with rank-revealing
decompositions}

According to the usual perturbation theory for the SVD (see e.g. \cite{DAV-KAH:1969}),
perturbations $\delta A$ of a matrix $A$ change the $i$th singular
value $\sigma_{i}$ by $\delta\sigma_{i}$ and corresponding unit
eigenvector $u_{i}$ by $\delta u_{i}$, where (assuming for simplicity
that $\sigma_{i}$ is simple), \begin{equation}
\left|\delta\sigma_{i}\right|/\sigma_{1}\leq\left\Vert \delta A\right\Vert ,\,\,\,\,\left\Vert \delta u_{i}\right\Vert \leq\frac{\left\Vert \delta A\right\Vert }{\text{absgap}_{i}},\,\,\,\,\text{absgap}_{i}=\min_{i\neq j}\left|\sigma_{i}-\sigma_{j}\right|/\sigma_{1}.\label{eq:usual perturbation bound}\end{equation}
Therefore, small perturbations in the elements of $A$ may lead to
large relative changes in the small singular values and the associated
singular vectors. Moreover, since standard algorithms compute an SVD
of some nearby matrix $A+\delta A$, where $\left\Vert \delta A\right\Vert /\left\Vert A\right\Vert =\mathcal{O}\left(\epsilon\right)$,
the perturbation bound (\ref{eq:usual perturbation bound}) shows
that the computed small singular values and corresponding singular
vectors will be inaccurate. 

In contrast, the authors in \cite{D-G-E-S-V-D:1997} show that, for
many structured matrices, the $i$th singular value $\sigma_{i}\ll\sigma_{1}$
and the associated singular vector are robust with respect to small
perturbations of the matrix that preserve its underlying structure.
The sensitivity is instead governed by the $i$th \textit{relative}
gap \[
\mbox{relgap}_{i}=\min_{i\neq j}\frac{\left|\sigma_{i}-\sigma_{j}\right|}{\sigma_{i}+\sigma_{j}}.\]

More precisely, let us consider the class of matrices for which a
rank-revealing decomposition $A=XDY^{*}$ is available and may be
computed accurately. Here $X$ and $Y$ are $n\times m$ well-conditioned
matrices and $D$ is an $m\times m$ diagonal matrix that contains
any possible ill-conditioning of $A$. As is shown in \cite{D-G-E-S-V-D:1997},
a perturbation of $A=XDY^{*}$ that is of the form $A+\delta A=\left(X+\delta X\right)\left(D+\delta D\right)\left(Y+\delta Y\right)^{*}$,
where \begin{equation}
\frac{\left\Vert \delta X\right\Vert }{\left\Vert X\right\Vert }=\mathcal{O}\left(\epsilon\right),\,\,\,\,\frac{\left\Vert \delta Y\right\Vert }{\left\Vert Y\right\Vert }=\mathcal{O}\left(\epsilon\right),\,\,\,\,\frac{\left|\delta D_{ii}\right|}{\left|D_{ii}\right|}=\mathcal{O}\left(\epsilon\right),\label{eq:perturb of RRD factors}\end{equation}
changes the $i$th singular value $\sigma_{i}$ and associated left
(or right) singular vector $u_{i}$ by amounts $\delta\sigma_{i}$
and $\delta u_{i}$ bounded by \begin{equation}
\frac{\left|\delta\sigma_{i}\right|}{\sigma_{i}}\leq\max\left(\kappa\left(X\right),\kappa\left(Y\right)\right)\mathcal{O}\left(\epsilon\right),\,\,\,\,\left\Vert \delta u_{i}\right\Vert \leq\frac{\max\left(\kappa\left(X\right),\kappa\left(Y\right)\right)}{\mbox{relgap}_{i}}\mathcal{O}\left(\epsilon\right),\label{eq:bound for perturbed SVD, background}\end{equation}
where $\kappa(X)=\left\Vert X\right\Vert \left\Vert X^{\dagger}\right\Vert $
and $X^{\dagger}$ denotes the pseudo-inverse of $A$. One reason
this class of matrices is so useful is that Gaussian elimination with
complete pivoting (GECP) (or simple modifications) computes accurate
rank-revealing decompositions of many types of structured matrices
(see \cite{D-G-E-S-V-D:1997} and \cite{DEMMEL:1999}). Moreover,
small perturbations of such matrices that preserve their underlying
structure lead to small perturbations in the rank-revealing factors
and, therefore, small relative perturbations of the singular values.

Given the decomposition $A=XDY^{*}$, it is shown in \cite[Algorithm 3.1]{D-G-E-S-V-D:1997}
that an SVD of $A$ may be computed with high relative accuracy, and
with about the same cost as standard, less accurate SVD algorithms
for dense matrices. The key to this algorithm is the one-sided Jacobi
algorithm (briefly reviewed in Section~\ref{sub:Modified-one-sided-Jacobi}),
which, with an appropriate stopping criterion, accurately computes
the SVD of matrices of the form $DB$, where $D$ is diagonal (and
typically highly ill-conditioned) and $B$ is well-conditioned (see
\cite{DEM-VES:1992} and \cite{MATHIA:1996}). In particular, the
algorithm in \cite[Algorithm 3.1]{D-G-E-S-V-D:1997} yields computed
singular values $\widehat{\sigma_{i}}$ and left (or right) singular
vectors $\widehat{u_{i}}$ that satisfy\begin{equation}
\frac{\left|\sigma_{i}-\widehat{\sigma_{i}}\right|}{\sigma_{i}}\leq\max\left(\kappa\left(X\right),\kappa\left(Y\right)\right)\mathcal{O}\left(\epsilon\right),\label{eq:bound for computed SVD, background}\end{equation}
\begin{equation}
\left\Vert u_{i}-\widehat{u_{i}}\right\Vert \leq\frac{\max\left(\kappa\left(X\right),\kappa\left(Y\right)\right)}{\mbox{relgap}_{i}}\mathcal{O}\left(\epsilon\right),\label{eq:bound for computed sing. vect. SVD, background}\end{equation}

\subsection{LDU factorization of Cauchy matrices\label{sub:Accurate-factorization-of}}

In this section we review how a modification of GECP computes accurate
rank-revealing decompositions of Cauchy matrices \cite{DEMMEL:1999}. 

We describe Demmel's algorithm (see Algorithms~$3$~and~$4$ in
\cite{DEMMEL:1999} and Algorithm~$2.5$ in \cite{BO-KA-OL:2002})
for computing an accurate rank-revealing decomposition of a $n\times n$
positive-definite Cauchy matrix $C_{ij}=a_{i}b_{j}/\left(x_{i}+y_{j}\right)$
(note that Demmel refers to such matrices as quasi-Cauchy). The algorithm
is based on a modification of Gaussian elimination for computing,
in $\mathcal{O}\left(n^{2}\right)$ operations, the Cholesky factorization
$C=\left(PL\right)D\left(PD\right)^{*}$ of a positive-definite Cauchy
matrix (more generally, the algorithm computes an LDU factorization
for an arbitrary Cauchy matrix in $\mathcal{O}\left(n^{3}\right)$
operations). Here $P$ is a permutation matrix, $L$ is a unit lower
triangular matrix, and $D$ is a diagonal matrix with positive diagonal
elements. It is shown in \cite{DEMMEL:1999} that, remarkably, the
components of the LDU factors $\widehat{L}$, $\widehat{U}$, and
$\widehat{D}$ are computed to high relative accuracy, \begin{equation}
\left|\widehat{L}_{ij}-L_{ij}\right|\leq\left|L_{ij}\right|c_{n}\epsilon,\,\,\,\,\left|\widehat{U}_{ij}-U_{ij}\right|\leq c_{n}\left|U_{ij}\right|\epsilon,\,\,\,\,\left|\widehat{D}_{ii}-D_{ii}\right|\leq c_{n}\left|D_{ii}\right|\epsilon,\label{eq:bound on computed Cholesky Cauchy}\end{equation}
where $c_{n}$ is a modest-sized function of $n$. The basic reason
the algorithm achieves high relative accuracy is that the only operations
involved are multiplication and division of floating point numbers
(additions and subtractions in the algorithm involve only $x_{i}$
and $y_{j}$, which are assumed to be exact). 

We now review the basic idea behind the algorithm in \cite{DEMMEL:1999}.
First, ignoring pivoting for a moment, we assume that, after $k$
steps of Gaussian elimination, the Cauchy matrix is transformed to
the matrix $G^{(k)}$,

\[
G^{(k)}=\left(\begin{array}{cc}
G_{11}^{(k)} & G_{12}^{(k)}\\
0 & G_{22}^{(k)}\end{array}\right).\]
The elements of the Schur complement $G_{22}^{(k+1)}$ may be computed
from those of $G_{22}^{(k)}$ by using the recursion\begin{eqnarray}
G_{ij}^{(k)} & = & \left(\frac{x_{i}-x_{k}}{x_{i}+y_{k}}\right)\left(\frac{y_{j}-y_{k}}{y_{j}+x_{k}}\right)G_{ij}^{(k-1)},\,\,\, i,j=k+1,\ldots,n.\label{eq:schur complement recursion}\end{eqnarray}
Introducing pivoting, we observe that the matrix $G^{(k)}$ may be
obtained by applying Gaussian elimination to a Cauchy matrix $C^{(k)}=C^{(k)}\left(a^{(k)},b^{(k)},x^{(k)},y^{(k)}\right)$,
where $a^{(k)}$, $b^{(k)}$, $x^{(k)}$ and $y^{(k)}$ are permutations
of $a$, $b$, $x$ and $y$ corresponding to the row and column pivoting
of $C$. As long as the vectors $a$, $b$, $x$ and $y$ are permuted
according to the pivoting of $G^{(k)}$, the recursive formula (\ref{eq:schur complement recursion})
still holds. 

It is observed in \cite{DEMMEL:1999} that if $C$ is positive-definite
(and, therefore, only diagonal pivoting is needed), then the pivot
order may be determined in advance in $\mathcal{O}\left(n^{2}\right)$
operations by computing $\text{diag}\left(G^{(k)}\right)$ from formula
(\ref{eq:schur complement recursion}). Once the correct pivot order
is known, we do not need to compute the entire Schur complement $G^{(k)}$
to extract the components of $L$ and $U$, but only its $k$th row
and $k$th column. Indeed, we may use Algorithm~$2.5$ in \cite{BO-KA-OL:2002},
which uses the displacement structure of $C$, to compute an accurate
Cholesky decomposition in $\mathcal{O}\left(n^{2}\right)$ operations.
To see how, note that it easily follows from (\ref{eq:schur complement recursion})
that the Schur complement of a Cauchy matrix is a Cauchy matrix, \begin{equation}
G^{(k)}\left(i,j\right)=\frac{\alpha_{i}^{(k)}\beta_{j}^{(k)}}{x_{i}+y_{j}},\,\,\,\, i,j=k+1,\ldots,n,\label{eq:recursion for schur complement}\end{equation}
where the parameters $\alpha_{i}^{(k)}$ and $\beta_{i}^{(k)}$ satisfy
the recursion \begin{equation}
\alpha_{i}^{(k)}=\frac{x_{i}-x_{k}}{x_{i}+y_{k}}\alpha_{i}^{(k-1)},\,\,\,\,\beta_{i}^{(k)}=\frac{y_{i}-y_{k}}{y_{i}+x_{k}}\beta_{i}^{(k-1)},\,\,\,\, i=k+1,\ldots,n.\label{eq:recursion for alpha's, beta's}\end{equation}
Since the $k$th column $L\left(:,k\right)$ may be extracted from
$G^{(k)}\left(:,k\right)$, we therefore only require $\mathcal{O}\left(n\right)$
operations at each step of Gaussian elimination to compute $L\left(:,k\right)$.
Updating $\alpha_{i}^{(k)}$ and $\beta_{i}^{(k)}$ also requires
only $\mathcal{O}\left(n\right)$ operations. In Section~\ref{sub:Accurate-con-eigenvalue-decompositions}
(see Algorithms~\ref{Flo:algorithm for pivot order-1}~and~\ref{Flo:Cauchy factorization alg-1}),
we present an $\mathcal{O}\left(n\left(\log\delta^{-1}\right)^{2}\right)$
algorithm to compute con-eigenvalues greater than a user specified
cutoff $\delta$ and, as a result, yielding a fast algorithm for obtaining
nearly optimal rational approximations. Once an accurate LDU factorization
$C\approx\left(P\widehat{L}\right)\widehat{D}\left(P\widehat{D}\right)^{*}$
is available, an accurate SVD of $C$ may be obtained using the algorithm
in \cite[Algorithm 3.1]{D-G-E-S-V-D:1997}.

\subsection{\label{sub:QR decomposition}Rank-revealing decompositions of graded
matrices\label{sub:Rank-revealing-decompositions-of}}

We also review how a variant of the QR Householder algorithm with
complete pivoting computes accurate rank-revealing decompositions
of graded matrices \cite{HIGHAM:2000}. 

It is shown in \cite{HIGHAM:2000} that the Householder QR algorithm
with complete pivoting may be used to compute a rank-revealing decomposition
of a graded matrix of the form $A=D_{1}BD_{2}$. Here $D_{1}$ and
$D_{2}$ are diagonal matrices that account for the ill-conditioning
of $A$. Recall that the Householder QR algorithm uses repeated applications
of orthogonal matrices to reduce $A$ to an upper-triangular matrix
$R$. On the first step, the parameter $\beta_{1}$ and the vector
$v_{1}$ of the Householder reflection matrix $Q^{(1)}=I-\beta_{1}v_{1}v_{1}^{*}$
are chosen so that\[
Q^{(1)}\left(\begin{array}{c}
a_{11}\\
a_{21}\\
\vdots\\
a_{n1}\end{array}\right)=\left(\begin{array}{c}
a_{11}^{(1)}\\
0\\
\vdots\\
0\end{array}\right).\]
Consequently, the first application of $Q^{(1)}$ to $A$ results
in a matrix of the form \[
A^{(1)}=Q^{(1)}A=\left(\begin{array}{cccc}
a_{11}^{(1)} & a_{12}^{(1)} & \dots & a_{1n}^{(1)}\\
0 & a_{22}^{(1)} & \dots & a_{2n}^{(1)}\\
\vdots & \vdots & \ddots & \vdots\\
0 & a_{n2}^{(1)} & \dots & a_{nn}^{(1)}\end{array}\right).\]
This process is repeated on the $\left(n-1\right)\times\left(n-1\right)$
lower block $\left[a_{ij}^{(1)}\right]_{2\leq i,j\leq n}$ and, after
$n-1$ such steps, $A^{(n-1)}=Q^{(n-1)}\dots Q^{(1)}A=R$, where $R$
is upper triangular. In the version considered in \cite{HIGHAM:2000},
the rows of $A$ are first pre-sorted so that so that $\left\Vert A\left(1,:\right)\right\Vert _{\infty}\geq\dots\geq\left\Vert A\left(n,:\right)\right\Vert _{\infty}$.
The algorithm then proceeds as above, except that at each step, $k$,
column pivoting is performed to ensure that $\left\Vert A^{(k)}\left(k:n,k\right)\right\Vert _{2}\geq\dots\geq\left\Vert A^{(k)}\left(k:n,n\right)\right\Vert _{2}$.
Letting $P_{1}$ denote the row permutation matrix that pre-sorts
the rows of $A$, and letting $P_{2}$ denote the column permutation
matrix corresponding to the column pivoting, the QR Householder algorithm
produces the QR factorization $P_{1}AP_{2}=QR$. 

Following \cite{HIGHAM:2000}, we consider the error analysis of the
Householder algorithm (without pivoting) applied to $P_{1}AP_{2}$,
where $P_{1}$ and $P_{2}$ are chosen so that no column or row exchanges
are necessary (e.g. the matrix $A$ is pre-pivoted). Assume that the
matrix $P_{1}AP_{2}$ may be factored as $P_{1}AP_{2}=D_{1}BD_{2}$,
where $D_{1}$ and $D_{2}$ are diagonal matrices, and that the Householder
algorithm, applied to the row-scaled matrix $C=D_{1}B$, produces
intermediate matrices $C^{(k)}$ with columns $c_{j}^{(k)}$. Finally,
define the quantities $\rho$, $\mu$, and $\psi$ by \begin{equation}
\rho=\max_{i}\frac{\max_{j,k}\left|c_{ij}^{(k)}\right|}{\max_{j}\left|c_{ij}\right|},\,\,\,\,\mu=\max_{k}\max_{j\geq k}\frac{\left\Vert c_{j}^{(k)}\left(k:m\right)\right\Vert }{\left\Vert c_{k}^{(k)}\left(k:m\right)\right\Vert },\,\,\,\,\psi=\max_{{1\leq i\leq n\atop i\leq k\leq n}}\frac{\max_{j}\left|c_{kj}\right|}{\max_{j}\left|c_{ij}\right|}.\label{eq:def of pivots for QR analysis}\end{equation}
The above quantities measure the extent to which the Householder algorithm
preserves the scaling in the intermediate matrices $A^{(k)}$, and
are almost always small (this is analogous to the pivot growth factor
in Gaussian elimination with row pivoting). It is shown in \cite{HIGHAM:2000}
that 
\begin{thm}
\label{thm:backward error for QR}Suppose that $A$ is pre-pivoted,
and the Householder algorithm is used to compute the upper triangular
matrix $\widehat{R}$ of the QR decomposition. Then there is an orthogonal
matrix $Q$ such that $Q\widehat{R}=D_{1}\left(B+\delta B\right)D_{2},$
where $\delta B$ satisfies  \[
\left\Vert \delta B\right\Vert \leq\rho\psi\mu\left\Vert B\right\Vert \mathcal{O}\left(\epsilon\right),\]
and $\rho$, $\mu$, and $\psi$ are defined in (\ref{eq:def of pivots for QR analysis}).
\end{thm}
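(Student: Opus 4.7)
The plan is to adapt the classical backward error analysis of Householder QR (see Higham's textbook) to exploit the grading $P_{1}AP_{2}=D_{1}BD_{2}$, and to account for the column-norm scaling through the factors $\rho,\mu,\psi$ defined in~\eqref{eq:def of pivots for QR analysis}. Since $P_{1}$ and $P_{2}$ have already been absorbed into the statement, I will work with the pre-pivoted matrix directly and construct $Q$ as the product of the exact Householder reflections associated with the floating-point vectors computed by the algorithm.

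First I would recall the standard per-step bound. If $v_{k}$ and $\beta_{k}$ are the parameters computed in floating point at step $k$, and $\widetilde{Q}^{(k)}=I-\beta_{k}v_{k}v_{k}^{*}$ denotes the \emph{exact} Householder reflection built from those parameters, then applying $\widetilde{Q}^{(k)}$ in floating point to the partially reduced matrix $C^{(k-1)}$ produces a matrix $C^{(k)}$ satisfying $\widetilde{Q}^{(k)}C^{(k-1)}=C^{(k)}+F^{(k)}$, where column $j$ of $F^{(k)}$ obeys $\|F^{(k)}(:,j)\|\leq c_{n}\epsilon\,\|C^{(k-1)}(:,j)\|$. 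Setting $Q=\widetilde{Q}^{(1)*}\cdots\widetilde{Q}^{(n-1)*}$ and telescoping gives the standard identity $Q\widehat{R}=A+\Delta A$, where $\Delta A=\sum_{k}\widetilde{Q}^{(1)*}\cdots\widetilde{Q}^{(k-1)*}F^{(k)}$, and the columns of $\Delta A$ are bounded column-wise by $c_{n}\epsilon$ times the largest column norms seen during the reduction.

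The key step, and the main obstacle, is to convert this column-wise bound into the scaled bound $\Delta A=D_{1}(\delta B)D_{2}$ with $\|\delta B\|\leq\rho\psi\mu\|B\|\mathcal{O}(\epsilon)$. To do so I would proceed column by column. Column $j$ of $\Delta A$ has entries of size at most $c_{n}\epsilon\max_{k}\|C^{(k)}(:,j)\|$, which by the definition of $\mu$ is bounded by $\mu\,c_{n}\epsilon\max_{k}\|C^{(k)}(:,k)\|$ (using the pre-pivoting so that the pivot column dominates each later column). The factor $\rho$ converts $\max_{k}\|c_{j}^{(k)}\|$ back to the original column norm of $C=D_{1}B$ in row $i$, and $\psi$ compares the largest surviving row to the original row, reflecting that Householder reflections may mix row scales of $D_{1}$ into a single reduced row. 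Pulling the common row scale $(D_{1})_{ii}$ out of each entry of $\Delta A$ (which is legitimate because orthogonal left-multiplications by $\widetilde{Q}^{(k)}$ preserve row scaling up to these growth factors) and the column scale $(D_{2})_{jj}$ out of each column of $B$ yields the representation $\Delta A=D_{1}(\delta B)D_{2}$ with $|\delta B_{ij}|\leq\rho\mu\psi\,|B_{ij}|\,\mathcal{O}(\epsilon)$ entry-wise, which in the Frobenius norm gives the advertised estimate.

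The essential difficulty is the row-mixing by Householder reflections: unlike Gaussian elimination, one application of $\widetilde{Q}^{(k)}$ combines all rows below and including row $k$, so the row grading encoded by $D_{1}$ is not preserved exactly. The factors $\rho$, $\mu$, and $\psi$ quantify exactly how badly this mixing can amplify the column norms relative to the original grading, and they are precisely what is needed to recover a backward error that is small relative to $B$ rather than only relative to $A$. Once the column-wise bookkeeping is carried out as above, the theorem follows from taking Frobenius norms and summing over $k$.
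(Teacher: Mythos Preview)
The paper does not supply its own proof of this theorem; it is quoted verbatim from \cite{HIGHAM:2000} as background in Section~\ref{sub:QR decomposition}, with the sentence ``It is shown in \cite{HIGHAM:2000} that'' immediately preceding the statement. So there is no in-paper argument to compare against, and your task was really to reconstruct Higham's proof.

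Your outline is in the right spirit---Higham does build $Q$ from the exact reflections associated with the computed Householder vectors and tracks the columnwise errors through the reduction---but the decisive conversion step is not justified. Two concrete issues. First, you end with an entrywise claim $|\delta B_{ij}|\leq\rho\mu\psi\,|B_{ij}|\,\mathcal{O}(\epsilon)$, which is strictly stronger than the normwise bound in the theorem and is not what the growth factors actually deliver; the correct target is $|\Delta A_{ij}|\leq (D_{1})_{ii}\,(D_{2})_{jj}\cdot\rho\mu\psi\,\|B\|\,\mathcal{O}(\epsilon)$, which yields the Frobenius bound after summing. Second, your description of what $\rho$ does (``converts $\max_{k}\|c_{j}^{(k)}\|$ back to the original column norm of $C$ in row $i$'') conflates rows and columns: $\rho$ in \eqref{eq:def of pivots for QR analysis} is a \emph{row-wise} growth factor measuring how much the entries in a fixed row $i$ can grow during the reduction, and it is exactly this row-by-row control that lets one peel off $(D_{1})_{ii}$ from $\Delta A_{ij}$. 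The standard columnwise Householder bound you start from does not give this directly, because a single reflection mixes all active rows; Higham's analysis instead bounds the error componentwise in each row, using $\psi$ to compare row $i$ of $C$ to the rows that later contaminate it and $\rho$ to control the growth within those rows. You correctly name this row-mixing as the essential difficulty, but the paragraph that is supposed to resolve it only restates the definitions rather than carrying out the row-wise bookkeeping.
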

In \cite{HIGHAM:2000} Theorem~\ref{thm:backward error for QR} is
combined with the theory developed in \cite{D-G-E-S-V-D:1997} (e.g.,
see Theorems~$4.1$~and~$4.2$ in \cite{D-G-E-S-V-D:1997}) to
show that the QR algorithm with complete pivoting produces accurate
rank revealing decompositions of graded matrices of the form $A=D_{1}BD_{2}$,
as long as the principal minors of $B$ are well-conditioned and the
diagonal elements of $D_{1}$ and $D_{2}$ are approximately decreasing
in magnitude. 
\begin{rem*}
Instead of pre-sorting the rows of $A$ and applying the Householder
algorithm with column pivoting, one may also use a version of the
Householder algorithm in which both row and column pivoting is employed
(see \cite{HIGHAM:2000} for more details). Gaussian elimination with
complete pivoting may also be used to obtain accurate rank-revealing
decompositions of graded matrices \cite{D-G-E-S-V-D:1997}. 
\end{rem*}

\subsection{Modified one-sided Jacobi algorithm \label{sub:Modified-one-sided-Jacobi}}

The heart of the algorithm in \cite[Algorithm 3.1]{D-G-E-S-V-D:1997}
is the modified one-sided Jacobi algorithm, which accurately computes
the SVD of matrices of the form $DB$ and $BD$, where $D$ is diagonal
and typically highly graded, and $B$ is well-conditioned (see \cite{DEM-VES:1992},
\cite{MATHIA:1996}, \cite{DRM-VES:2007,DRM-VES:2007a}). Although
we focus on the one-sided Jacobi algorithm as applied to $G=BD$,
analogous considerations apply to $G=DB$ by replacing $G$ by $G^{*}$.
The one-sided Jacobi algorithm works by applying a sequence of Jacobi
matrices $J_{1},\ldots,J_{M}$ to $G$ from the right (i.e., the same
side as the scaling, which ensures that components of the right singular
vectors are computed with high relative accuracy). Each Jacobi matrix
$J$ is chosen to orthogonalize two selected columns, and one sweep
consists of orthogonalizing columns in the order $(1,1),(1,2),\ldots,(1,n)$,
followed by columns $(2,3),(2,4),\ldots,(2,n)$, and so on. Sweeps
are repeated until all the columns are orthogonal to each other to
within the bound \[
G\left(J_{1}\cdots J_{M}\right)=W,\,\,\,\frac{\left|w_{i}^{*}w_{j}\right|}{\left|w_{i}^{*}w_{i}\right|^{1/2}\left|w_{i}^{*}w_{i}\right|^{1/2}}\leq n\epsilon,\,\,\,\text{if }i\neq j.\]
This stopping criterion is used to ensure that even the smallest singular
values are computed with high relative accuracy. The SVD of $G=U\Sigma V^{*}$
immediately follows by taking $\Sigma_{ii}=W\left(:,i\right)$, $V=W/\Sigma$,
and $U=\left(J_{1}J_{2}\cdots J_{M}\right)^{*}$. 

It will be crucial for the error bounds developed in this paper that
the components of the left singular vectors of $DB$ (or the right
singular vectors of $BD$) scale in a way similar to $D$, and are
computed accurately relative to this scaling. At each step $m$ of
the Jacobi algorithm, we write $\left(J_{0}\cdots J_{m}\right)G=B_{m}D_{m}$,
where the columns of $B_{m}$ have unit $l^{2}$-norm and the matrix
$D_{m}$ is diagonal. Defining \begin{equation}
\nu_{0}=\max_{1\leq m\leq M}\kappa_{2}\left(B_{m}\right),\label{eq:definition of nu, jacobi}\end{equation}
we then have the following result from \cite{MATHIA:1996} and \cite{DEM-VES:1992}.
\begin{thm}
\label{thm:Jacobi bound one-sided scaling}Let $G=DB$ be a $n\times n$
full-rank, complex-valued matrix, where the diagonal matrix $D$ is
chosen so that the $l^{2}$-norm of each column of $B$ is unity.
Suppose that one-sided Jacobi algorithm is used to compute an approximation
$\widehat{u_{i}}$ to the $i$th left singular vector $u_{i}$ of
$G$, corresponding to singular value $\Sigma_{ii}$, and the iteration
converges after $M$ sweeps. Then the following error bound holds
on the computed components of $u_{i}$: \begin{equation}
\left|u_{i}\left(j\right)-\widehat{u_{i}}\left(j\right)\right|\leq\min\left\{ \frac{D_{jj}}{\sqrt{\Sigma_{ii}}},\frac{\sqrt{\Sigma_{ii}}}{D_{jj}}\right\} \left(\frac{\rho\left(M,n\right)\nu_{0}^{2}}{\mbox{relgap}_{i}}\epsilon+\mathcal{O}\left(\epsilon^{2}\right)\right),\label{eq: estimateForLeftSingularComponents}\end{equation}
where \textup{\[
\mbox{relgap}_{i}=\frac{\left|\sigma_{i}-\sigma_{j}\right|}{\sigma_{i}+\sigma_{j}},\]
} $\rho\left(M,n\right)$ is proportional to $M\cdot n^{3/2}$, and
$\nu_{0}$ in defined in (\ref{eq:definition of nu, jacobi}). Moreover,
the computed singular value $\widetilde{\Sigma_{ii}}$ satisfies\[
\frac{\left|\Sigma_{ii}-\widetilde{\Sigma_{ii}}\right|}{\Sigma_{ii}}\leq\nu_{0}\mathcal{O}\left(\epsilon\right).\]

\end{thm}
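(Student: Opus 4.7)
The plan is to split the proof into a backward-error analysis of the Jacobi sweeps that yields a \emph{structured} (columnwise-relative) perturbation of $G = DB$, followed by an appeal to componentwise perturbation theory for singular vectors of graded matrices developed in \cite{DEM-VES:1992, MATHIA:1996, MATHIA:1997}. Throughout I would write each iterate in the form $G^{(m)} = B_m D_m$ with the columns of $B_m$ of unit $\ell^2$-norm, so that the well-conditioned part (bounded by $\nu_0 = \max_m \kappa_2(B_m)$) and the graded part (absorbed into $D_m$) are kept separate at every step.

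First I would analyze a single plane rotation in floating point. Because one Jacobi rotation applied from the right touches only two columns of $G^{(m)}$, a standard rounding analysis shows that the computed iterate coincides with the exact rotation of $G^{(m)} + \delta G^{(m)}$, where $\delta G^{(m)} = (\delta B_m) D_m$ satisfies $\|\delta B_m\|/\|B_m\| = \mathcal{O}(\epsilon)$; the backward error is thus absorbed into the well-conditioned factor. Accumulating this bound across the $M$ sweeps and using $\kappa_2(B_m)\le \nu_0$ at every step, the total backward error may be written as $\widehat{G} = (B+\delta B)D$ with $\|\delta B\|\le \rho(M,n)\,\nu_0\,\epsilon\,\|B\|$, or equivalently as a right-multiplicative perturbation $\widehat{G} = G(I+E)$ with $\|E\|\le \rho(M,n)\,\nu_0\,\epsilon$.

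Next I would invoke the graded-matrix perturbation theorem. For $G = DB$ perturbed to $\widetilde{G} = G(I+E)$ with $\|E\|\le \eta$, that theorem yields both a relative bound on each singular value, $|\Sigma_{ii}-\widetilde{\Sigma}_{ii}|/\Sigma_{ii}\le \nu_0\,\eta$, and a componentwise bound on the $i$th left singular vector,
\[
|u_i(j)-\widetilde{u}_i(j)| \;\le\; \min\!\left\{\frac{D_{jj}}{\sqrt{\Sigma_{ii}}},\,\frac{\sqrt{\Sigma_{ii}}}{D_{jj}}\right\}\frac{\nu_0^2\,\eta}{\mathrm{relgap}_i} + \mathcal{O}(\eta^2).
\]
Specializing to $\eta = \rho(M,n)\,\nu_0\,\epsilon$ gives both conclusions of the theorem, with $\rho(M,n)$ carrying the sweep count and matrix-size dependence.

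The main obstacle will be the componentwise envelope $\min(D_{jj}/\sqrt{\Sigma_{ii}},\,\sqrt{\Sigma_{ii}}/D_{jj})$, since gap-based perturbation theory gives only norm estimates. Expressing $u_i$ through a right singular vector $v_i$ via $u_i = DBv_i/\Sigma_{ii}$ and using $\Sigma_{ii}^2 = \|DBv_i\|^2$ with $\|B\|\le \nu_0$ yields $|u_i(j)|\le \nu_0\,D_{jj}/\sqrt{\Sigma_{ii}}$, while the dual representation $u_i = D^{-1}B^{-*}v_i\,\Sigma_{ii}$ with $\|B^{-1}\|\le \nu_0$ yields $|u_i(j)|\le \nu_0\,\sqrt{\Sigma_{ii}}/D_{jj}$; the minimum of the two produces the envelope for $|u_i(j)|$. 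Applying the same pair of representations to the difference $u_i - \widetilde{u}_i$ and combining them with a first-order Rayleigh--Schr\"odinger expansion of the perturbed eigenproblem for $(G+\delta G)^*(G+\delta G)$ propagates the envelope to the perturbation and introduces the factor $1/\mathrm{relgap}_i$, completing the estimate.
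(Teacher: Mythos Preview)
The paper does not actually prove this theorem: it is stated in the background Section~\ref{sub:Modified-one-sided-Jacobi} and attributed directly to \cite{DEM-VES:1992} and \cite{MATHIA:1996}, so there is no ``paper's own proof'' to compare against. Your sketch is essentially the argument in those references --- a columnwise-relative backward error for each Jacobi rotation, accumulated over $M$ sweeps, followed by the componentwise perturbation bounds for singular vectors of graded matrices (cf.\ Theorem~\ref{thm:perturbation, D^2A} and \cite{MATHIA:1997}) --- and the mechanism you describe for the envelope $\min\{D_{jj}/\sqrt{\Sigma_{ii}},\sqrt{\Sigma_{ii}}/D_{jj}\}$ via the dual representations $u_i = DBv_i/\Sigma_{ii}$ and $u_i = D^{-1}B^{-*}v_i\,\Sigma_{ii}$ is exactly the right one.

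Two small points to clean up. First, there is a left/right inconsistency: the theorem is stated for $G=DB$ (row scaling) and concerns the \emph{left} singular vectors, so the Jacobi rotations must be applied from the left (equivalently, right-sided Jacobi on $G^*$); your write-up has $G^{(m)}=B_mD_m$ and rotations ``from the right,'' which is the $G=BD$ convention. Second, your accounting $\eta=\rho(M,n)\,\nu_0\,\epsilon$ followed by a perturbation bound with prefactor $\nu_0^2\eta$ overshoots the stated constant by one power of $\nu_0$; in the Demmel--Veseli\'c analysis the per-rotation backward error is $\|\delta B_m\|=\mathcal{O}(\epsilon)\|B_m\|$ with no $\nu_0$, and the $\nu_0$ factors enter only through the graded perturbation theorem, giving $\rho(M,n)\,\nu_0^2\,\epsilon/\mathrm{relgap}_i$ as stated.
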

~~~For convenience, we define\begin{equation}
\nu=\rho\left(M,n\right)\nu_{0}^{2}.\label{eq:def of nu}\end{equation}
 The following result (see Theorem~$3.6$ of \cite{MATHIA:1997})
will also be needed. 
\begin{thm}
\label{thm:perturbation, D^2A}Let $G=DB$, where $B$ is non-singular
and $D$ is diagonal. Then the $i$th left singular vector of $G$,
corresponding to the simple singular value $\Sigma_{ii}$, satisfies
\[
\left|u_{i}\left(j\right)\right|\leq\frac{\max\left\{ \left\Vert B^{-1}\right\Vert ,\left\Vert B\right\Vert \right\} }{\text{relgap}_{i}}\min\left\{ \frac{D_{jj}}{\sqrt{\Sigma_{ii}}},\frac{\sqrt{\Sigma_{ii}}}{D_{jj}}\right\} .\]
 Moreover, there is a left singular vector $\widetilde{u_{i}}$ of
the perturbed matrix $G+\delta G=D\left(B+\delta B\right)$ that satisfies\begin{eqnarray*}
\left|u_{i}\left(j\right)-\widetilde{u_{i}}\left(j\right)\right| & \leq & \frac{\max\left\{ \left\Vert B^{-1}\right\Vert ^{2},\kappa\left(B\right)\right\} }{\text{relgap}_{i}}\min\left\{ \frac{D_{jj}}{\sqrt{\Sigma_{ii}}},\frac{\sqrt{\Sigma_{ii}}}{D_{jj}}\right\} \mathcal{O}\left(\left\Vert \delta B\right\Vert \right).\end{eqnarray*}

\end{thm}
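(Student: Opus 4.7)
The plan is to reduce Theorem~\ref{thm:perturbation, D^2A} to the component-wise perturbation theory for graded Hermitian positive-definite matrices that already underlies Proposition~\ref{pro:perturb for sing vecs of DBD}. The key observation is that the left singular vectors $u_{i}$ of $G=DB$ are precisely the eigenvectors of $GG^{*}=D(BB^{*})D$, with eigenvalues $\Sigma_{ii}^{2}$, and this matrix is of the graded form $DCD$ with $C=BB^{*}$ Hermitian positive definite.

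First I would introduce the Cholesky factorization $BB^{*}=LL^{*}$, noting that $\|L\|=\|B\|$, $\|L^{-1}\|=\|B^{-1}\|$, and $\kappa(L)=\kappa(B)$. The Hermitian analog of Proposition~\ref{pro:perturb for sing vecs of DBD} (whose appendix proof carries over to the case $C=LL^{*}$; see also \cite{DEM-VES:1992,MATHIA:1997}) then provides the unperturbed component-wise bound for $u_{i}$, with $\sqrt{\Sigma_{ii}^{2}}=\Sigma_{ii}$ giving the asserted $\sqrt{\Sigma_{ii}}$ scaling after absorbing a factor of $\sqrt{\Sigma_{ii}}$ from the Mathias-style refinement described below. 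The constant reduces from $\mu_{1}(L_{B})$ to $\max\{\|B\|,\|B^{-1}\|\}$ once one exploits the two singular-value identities
\[
u_{i}(j)=\frac{D_{jj}}{\Sigma_{ii}}(Bv_{i})(j),\qquad u_{i}(j)=\frac{\Sigma_{ii}}{D_{jj}}\bigl((B^{*})^{-1}v_{i}\bigr)(j),
\]
and takes a geometric mean in the appropriate regime where $D_{jj}$ and $\sqrt{\Sigma_{ii}}$ compare.

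For the perturbation bound, I would write $\widetilde{G}\widetilde{G}^{*}=D(BB^{*}+\delta C)D$, where $\delta C=B(\delta B)^{*}+(\delta B)B^{*}+\mathcal{O}(\|\delta B\|^{2})$, so $\|\delta C\|\leq 2\|B\|\,\mathcal{O}(\|\delta B\|)$. Applying the perturbation statement of the same (Hermitian analog of) Proposition~\ref{pro:perturb for sing vecs of DBD} with $\delta B\mapsto\delta C$ then controls the $j$th component of $u_{i}-\widetilde{u_{i}}$ in terms of the relative gap for the eigenvalues $\Sigma_{ii}^{2}$ of $GG^{*}$. Finally, I would convert to the relative gap for $\Sigma_{ii}$ via the elementary inequality
\[
\min_{j\neq i}\frac{|\Sigma_{ii}^{2}-\Sigma_{jj}^{2}|}{\Sigma_{ii}^{2}+\Sigma_{jj}^{2}}\;\geq\;\min_{j\neq i}\frac{|\Sigma_{ii}-\Sigma_{jj}|}{\Sigma_{ii}+\Sigma_{jj}}\;=\;\mbox{relgap}_{i},
\]
which the excerpt already uses in the proof of Theorem~\ref{thm:truncation theorem}. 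The extra factor $\|B\|$ coming out of $\|\delta C\|$ combines with $\|B^{-1}\|^{2}$ (inherited from inverting $BB^{*}$) to produce the advertised $\max\{\|B^{-1}\|^{2},\kappa(B)\}$.

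The main obstacle is a delicate bookkeeping of constants: a naive use of Proposition~\ref{pro:perturb for sing vecs of DBD} with $C=BB^{*}$ yields the pessimistic $\mu_{2}(L_{B})\sim\|L_{B}^{-1}\|^{4}\kappa^{4}(L_{B})$, whereas the theorem claims only $\max\{\|B^{-1}\|^{2},\kappa(B)\}$. Closing this gap requires the direct Mathias-style argument that works with both singular-value identities above in tandem, rather than reducing only to $GG^{*}$; a first-order perturbation expansion of $v_{i}\mapsto v_{i}+\delta v_{i}$ (with $\|\delta v_{i}\|\leq\mathcal{O}(\|\delta B\|)/\mbox{relgap}_{i}$ from standard SVD gap bounds) is then substituted into the two identities to produce the geometric-mean component scaling with the sharp constant.
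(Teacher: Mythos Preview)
The paper does not prove Theorem~\ref{thm:perturbation, D^2A}; it simply cites it as Theorem~3.6 of \cite{MATHIA:1997}. So there is no in-paper argument to compare against, and your task was really to reconstruct Mathias's proof.

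Your high-level strategy---pass to $GG^{*}=D(BB^{*})D$ and invoke graded positive-definite perturbation theory---is exactly the natural route and is in the spirit of \cite{MATHIA:1997}. The two identities you wrote down are correct and are the basic tool. However, your sketch does not actually close the argument. First, the identities give $|u_{i}(j)|\leq (D_{jj}/\Sigma_{ii})\|B\|$ and $|u_{i}(j)|\leq (\Sigma_{ii}/D_{jj})\|B^{-1}\|$, which produce the scaling $\min\{D_{jj}/\Sigma_{ii},\Sigma_{ii}/D_{jj}\}$, not the $\sqrt{\Sigma_{ii}}$ scaling in the statement; your appeal to a ``geometric mean in the appropriate regime'' does not explain how the square root enters, and multiplying the two bounds only yields the trivial $|u_{i}(j)|^{2}\leq\kappa(B)$. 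Second, you acknowledge that routing through Proposition~\ref{pro:perturb for sing vecs of DBD} with $C=BB^{*}$ gives the wrong constant $\mu_{2}(L_{B})$ and then assert that ``the direct Mathias-style argument'' fixes this, but you never actually carry out that argument; in particular, you do not show how the advertised $\max\{\|B^{-1}\|^{2},\kappa(B)\}$ emerges from the first-order expansion you propose. These are precisely the delicate steps in \cite{MATHIA:1997}, and your proposal defers rather than supplies them.
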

~~~

\section{Appendix: Proof of Proposition~\ref{pro:perturb for sing vecs of DBD}}

\subsection{Overview of the proof  }

Proposition~\ref{pro:perturb for sing vecs of DBD} concerns how
graded perturbations of the form $D\left(B+\delta B\right)D$ perturb
the singular vectors of $DBD$, where the diagonal of $D>0$ is decreasing
and $B$ is complex symmetric and non-singular. As in \cite{D-G-E-S-V-D:1997},
we analyze the SVD of $DBD$ through the LU factorization $B=L_{B}L_{B}^{\text{T}}$.
In particular, we show that perturbations $D\left(\delta B\right)D$
of $DBD$ result in graded perturbations $\delta U$ of the singular
vectors: \[
\max\left\{ \left\Vert D\left(\delta U\right)\Sigma^{-1/2}\right\Vert ,\left\Vert D^{-1}\left(\delta U\right)\Sigma^{1/2}\right\Vert \right\} \leq c_{\delta U}\left(L_{B},U_{B}\right)\mathcal{O}\left(\left\Vert \delta B\right\Vert \right).\]
Here $c_{\delta U}\left(L_{B}\right)$ depends on the condition number
of $L_{B}$.

We now discuss the basic scaling considerations behind the proof of
Proposition~\ref{pro:perturb for sing vecs of DBD}. First, if $\kappa\left(L_{B}\right)$
is not too large, then $DBD$ has the rank-revealing decomposition
$DBD=XD^{2}Y$, where $X=\left(DL_{B}D^{-1}\right)$ and $Y=D^{-1}L_{B}^{\text{T}}D$.
Therefore, letting $X=QR$ denote a QR factorization of $X$ and setting
$A=\left(D^{-2}RD^{2}\right)Y^{*}$, it follows that $DBD=QD^{2}A$,
where \[
\kappa\left(X\right)\leq\kappa\left(L_{B}\right),\,\,\,\kappa\left(Y\right)\leq\kappa\left(L_{B}\right),\,\,\,\kappa\left(A\right)\leq\kappa^{2}\left(L_{B}\right).\]

From theory developed in \cite{MATHIA:1997} and \cite{DEM-VES:1992}
(see Lemma~\ref{lem:grading of sing vecs, D^2A}), the matrix of
left singular vectors $U_{l}$ of $D^{2}A=U_{l}\Sigma U_{r}^{*}$
is graded in the sense that \begin{equation}
\max\left\{ \left\Vert D^{-1}U\Sigma^{1/2}\right\Vert ,\left\Vert DU\Sigma^{-1/2}\right\Vert \right\} \leq\max\left\{ \left\Vert L_{B}^{-1}\right\Vert ^{2},\left\Vert L_{B}\right\Vert ^{2}\right\} .\label{eq:overview, grading 1}\end{equation}
It also can be shown (Lemma~\ref{lem:bound on Q components-1}) that
in the QR factorization $QR=\left(DL_{B}D^{-1}\right)$, the elements
of $Q$ are also graded in the sense that \begin{equation}
\max\left\{ \left\Vert D^{-1}QD\right\Vert ,\left\Vert DQD^{-1}\right\Vert \right\} \leq\kappa\left(L_{B}\right).\label{eq:overview, grading 2}\end{equation}
Finally, from (\ref{eq:overview, grading 1}) and (\ref{eq:overview, grading 2}),
it is not difficult to show that the matrix $U=QU_{l}$ of left singular
vectors of $DBD$ is also graded, \[
\max\left\{ \left\Vert D^{-1}\left(QU_{l}\right)\Sigma^{1/2}\right\Vert ,\left\Vert D\left(QU_{l}\right)\Sigma^{-1/2}\right\Vert \right\} \leq\mu_{1}\left(L_{B}\right),\]
where the factor $\mu_{1}\left(L_{B}\right)$ is defined by \[
\mu_{1}\left(L_{B}\right)=\kappa\left(L_{B}\right)\max\left\{ \left\Vert L_{B}^{-1}\right\Vert ^{2},\left\Vert L_{B}\right\Vert ^{2}\right\} .\]

\subsection{Preliminary Lemmas}

Before proceeding to the proof of Proposition~\ref{pro:perturb for sing vecs of DBD},
we first need several lemmas on graded matrices. 
\begin{lem}
\label{lem:bound on Q components-1}Let $L$ denote a nonsingular
lower triangular matrix, and let $D$ denote a diagonal matrix with
positive, decreasing diagonal elements. Then if $A=DLD^{-1}$ has
the QR factorization $A=QR$, the elements of $Q$ are bounded by
\[
\max\left\{ \left\Vert D^{-1}QD\right\Vert ,\left\Vert DQD^{-1}\right\Vert \right\} \leq\kappa\left(L\right).\]
\end{lem}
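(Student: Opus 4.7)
The plan is to express the unitary factor $Q$ in two equivalent ways, one convenient for each of the two bounds, and to collapse the $A$-contribution into $L$ (or $L^{-*}$) after conjugation by $D$, so that the remaining triangular piece can be handled via Lemma~\ref{lem:graded triangular matrices} and the unitary invariance of the Frobenius norm.

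For $\left\Vert D^{-1}QD\right\Vert$ I will use $Q = AR^{-1}$ and insert $DD^{-1}$, obtaining
\[
D^{-1}QD \;=\; (D^{-1}AD)(D^{-1}R^{-1}D) \;=\; L\,(D^{-1}R^{-1}D),
\]
since $D^{-1}AD = D^{-1}(DLD^{-1})D = L$. Because $R^{-1}$ is upper triangular, Lemma~\ref{lem:graded triangular matrices} gives $\left\Vert D^{-1}R^{-1}D\right\Vert \le \left\Vert R^{-1}\right\Vert$. The factor $\left\Vert R^{-1}\right\Vert$ is in turn controlled using unitary invariance of the Frobenius norm together with $R^{-1} = A^{-1}Q$ and $A^{-1} = DL^{-1}D^{-1}$:
\[
\left\Vert R^{-1}\right\Vert \;=\; \left\Vert A^{-1}Q\right\Vert \;=\; \left\Vert A^{-1}\right\Vert \;\le\; \left\Vert L^{-1}\right\Vert ,
\]
where the last inequality applies Lemma~\ref{lem:graded triangular matrices} once more. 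Assembling the pieces yields $\left\Vert D^{-1}QD\right\Vert \le \left\Vert L\right\Vert \left\Vert L^{-1}\right\Vert = \kappa(L)$.

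For $\left\Vert DQD^{-1}\right\Vert$ I will flip the scaling direction by using the equivalent representation $Q = A^{-*}R^{*}$, which follows from $Q^{*}A = R$. Then
\[
DQD^{-1} \;=\; (DA^{-*}D^{-1})(DR^{*}D^{-1}) \;=\; L^{-*}\,(DR^{*}D^{-1}),
\]
using $A^{-*} = D^{-1}L^{-*}D$, so that $DA^{-*}D^{-1} = L^{-*}$. Since $R^{*}$ is lower triangular, Lemma~\ref{lem:graded triangular matrices} gives $\left\Vert DR^{*}D^{-1}\right\Vert \le \left\Vert R\right\Vert$, and unitary invariance of the Frobenius norm gives $\left\Vert R\right\Vert = \left\Vert QR\right\Vert = \left\Vert A\right\Vert \le \left\Vert L\right\Vert$. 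This again produces the bound $\kappa(L)$.

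I do not anticipate a real obstacle: both bounds reduce to the same two ingredients already established, namely the triangular–graded inequalities of Lemma~\ref{lem:graded triangular matrices} and unitary invariance of the Frobenius norm. The only mildly delicate point is choosing the correct formula for $Q$ in each direction, so that $D$-conjugation cancels the scaling carried by $A$ and leaves behind a triangular matrix whose grading has the orientation required by Lemma~\ref{lem:graded triangular matrices}.
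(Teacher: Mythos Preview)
Your proof is correct and follows essentially the same approach as the paper. The first bound is handled identically; for the second, the paper writes $D^{-1}Q^{-1}D=(D^{-1}RD)L^{-1}$ and then uses $Q^{-1}=Q^{*}$ to identify $\left\Vert D^{-1}Q^{-1}D\right\Vert=\left\Vert DQD^{-1}\right\Vert$, whereas you work directly with $Q=A^{-*}R^{*}$, but this is the same unitarity trick in transposed form.
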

\begin{proof}
Rearranging $QR=DLD^{-1}$,\[
D^{-1}QD=L\left(D^{-1}R^{-1}D\right).\]
Since $R^{-1}$ is upper triangular and the elements of $D>0$ are
decreasing, Lemma~\ref{lem:graded triangular matrices} implies that\[
\left\Vert D^{-1}QD\right\Vert \leq\left\Vert L\right\Vert \left\Vert R^{-1}\right\Vert \leq\kappa\left(L\right).\]
In the last inequality, we used that $\left\Vert R^{-1}\right\Vert =\left\Vert DLD^{-1}\right\Vert \leq\left\Vert L\right\Vert $
(again, by Lemma~\ref{lem:graded triangular matrices}). 

To bound $\left\Vert DQD^{-1}\right\Vert $, we rearrange $QR=DLD^{-1}$
as \[
D^{-1}Q^{-1}D=\left(D^{-1}RD\right)L^{-1}.\]
Also, since $Q^{-1}=Q^{*},$ we calculate that \begin{eqnarray*}
\left\Vert D^{-1}Q^{-1}D\right\Vert  & = & \left\Vert D^{-1}Q^{*}D\right\Vert =\left\Vert \left(DQD^{-1}\right)^{*}\right\Vert \\
 & = & \left\Vert \left(DQD^{-1}\right)\right\Vert .\end{eqnarray*}
Therefore,\[
\left\Vert \left(DQD^{-1}\right)\right\Vert =\left\Vert \left(D^{-1}RD\right)L^{-1}\right\Vert \leq\left\Vert R\right\Vert \left\Vert L^{-1}\right\Vert \leq\kappa\left(L\right),\]
where we again used Lemma~\ref{lem:graded triangular matrices}.\end{proof}
\begin{lem}
\label{lem:mult of graded matrices-1}Let $D$ and $\Sigma$ denote
diagonal matrices, and suppose that the matrices $Q$ and $U$ satisfy
\[
\max\left\{ \left\Vert D^{-1}QD\right\Vert ,\left\Vert DQD^{-1}\right\Vert \right\} \leq c_{Q},\,\,\,\,\max\left\{ \left\Vert D^{-1}U\Sigma^{1/2}\right\Vert ,\left\Vert DU\Sigma^{-1/2}\right\Vert \right\} \leq c_{U}.\]
Then \[
\max\left\{ \left\Vert D^{-1}\left(QU\right)\Sigma^{1/2}\right\Vert ,\left\Vert D\left(QU\right)\Sigma^{-1/2}\right\Vert \right\} \leq c_{Q}c_{U}.\]
\end{lem}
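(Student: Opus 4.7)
The plan is very short: the lemma reduces to inserting $I = D\,D^{-1}$ between $Q$ and $U$ and applying submultiplicativity of the Frobenius norm. Recall that the paper has declared $\|\cdot\|$ to mean the Frobenius norm throughout, and that $\|AB\| \leq \|A\|\,\|B\|$ holds for it.

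First I would handle the left-graded product. Writing
\[
D^{-1}(QU)\Sigma^{1/2} = \bigl(D^{-1}Q D\bigr)\bigl(D^{-1}U\Sigma^{1/2}\bigr),
\]
submultiplicativity of $\|\cdot\|$ together with the two hypotheses immediately yields
\[
\bigl\|D^{-1}(QU)\Sigma^{1/2}\bigr\| \leq \bigl\|D^{-1}QD\bigr\|\,\bigl\|D^{-1}U\Sigma^{1/2}\bigr\| \leq c_Q c_U.
\]

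Next I would handle the right-graded product in exactly the same way. Writing
\[
D(QU)\Sigma^{-1/2} = \bigl(DQD^{-1}\bigr)\bigl(DU\Sigma^{-1/2}\bigr),
\]
submultiplicativity gives
\[
\bigl\|D(QU)\Sigma^{-1/2}\bigr\| \leq \bigl\|DQD^{-1}\bigr\|\,\bigl\|DU\Sigma^{-1/2}\bigr\| \leq c_Q c_U.
\]
Taking the maximum of the two inequalities completes the proof.

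There is no real obstacle here: the only thing to notice is that the factorizations $D^{-1}QU\Sigma^{1/2} = (D^{-1}QD)(D^{-1}U\Sigma^{1/2})$ and $DQU\Sigma^{-1/2} = (DQD^{-1})(DU\Sigma^{-1/2})$ align the two pieces of the hypothesis so that the interior $D$ factors cancel, and then Frobenius submultiplicativity finishes the job. No properties of $Q$ or $U$ beyond the stated bounds are used, and the diagonal matrices $D$ and $\Sigma$ play no further role.
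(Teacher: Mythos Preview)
Your proof is correct and is essentially identical to the paper's own proof: both insert $D D^{-1}$ (or $D^{-1}D$) between $Q$ and $U$, factor, and apply submultiplicativity of the Frobenius norm. The only cosmetic difference is that the paper treats the $D(QU)\Sigma^{-1/2}$ case first and the $D^{-1}(QU)\Sigma^{1/2}$ case second, whereas you do them in the opposite order.
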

\begin{proof}
We have that \[
\left\Vert D\left(QU\right)\Sigma^{-1/2}\right\Vert =\left\Vert \left(DQD^{-1}\right)\left(DU\Sigma^{-1/2}\right)\right\Vert \leq\left\Vert \left(DQD^{-1}\right)\right\Vert \left\Vert \left(DU\Sigma^{-1/2}\right)\right\Vert \leq c_{Q}c_{U}.\]
The reverse inequality follows from \[
\left\Vert D^{-1}\left(QU\right)\Sigma^{1/2}\right\Vert =\left\Vert \left(D^{-1}QD\right)\left(D^{-1}U\Sigma^{1/2}\right)\right\Vert \leq c_{Q}c_{U}.\]

\end{proof}
The following lemma (see Corollary $3.3$ and Proposition~3.6 in
\cite{MATHIA:1997}) shows that the matrix of left singular vectors
of $D^{2}A$ are graded in a particular way.
\begin{lem}
\label{lem:grading of sing vecs, D^2A}Suppose that $D$ is a positive-definite
diagonal matrix, and $A$ is non-singular. Then if $D^{2}A$ has the
SVD $D^{2}A=U_{l}\Sigma U_{r}$, \[
\left\Vert D^{-2}U_{l}\Sigma\right\Vert \leq\left\Vert A\right\Vert ,\,\,\,\left\Vert \left(D^{2}U_{l}\Sigma^{-1}\right)\right\Vert \leq\left\Vert A^{-1}\right\Vert ,\]
and \[
\left|\left(U_{l}\right)_{ij}\right|\leq\kappa\left(A\right)\min\left\{ \frac{D_{ii}^{2}}{D_{jj}^{2}},\frac{D_{jj}^{2}}{D_{ii}^{2}}\right\} .\]
%
{}
\end{lem}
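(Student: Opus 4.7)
The plan is to reduce the lemma to two algebraic rearrangements of the SVD identity $D^{2}A=U_{l}\Sigma U_{r}$ (with $U_{l}$ and $U_{r}$ unitary) and then invoke standard singular-value inequalities. Multiplying on the right by $U_{r}^{*}$ and on the left by $D^{-2}$ yields $D^{-2}U_{l}\Sigma=AU_{r}^{*}$, while multiplying $D^{2}A=U_{l}\Sigma U_{r}$ on the left by $\Sigma^{-1}U_{l}^{*}$ and on the right by $A^{-1}$ gives $\Sigma^{-1}U_{l}^{*}D^{2}=U_{r}A^{-1}$. Each of these will be squeezed once for a norm-wise estimate and once for a component-wise estimate.

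For the two norm-wise bounds, I would take Frobenius norms of these identities. Right multiplication by the unitary $U_{r}^{*}$ preserves the Frobenius norm, so $\|D^{-2}U_{l}\Sigma\|=\|AU_{r}^{*}\|=\|A\|$. Similarly $\|\Sigma^{-1}U_{l}^{*}D^{2}\|=\|U_{r}A^{-1}\|=\|A^{-1}\|$, and invariance under conjugate transpose gives $\|\Sigma^{-1}U_{l}^{*}D^{2}\|=\|D^{2}U_{l}\Sigma^{-1}\|$, establishing the second norm-wise claim.

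For the component-wise bound I would read off the $(i,j)$ entry of each identity. From $D^{-2}U_{l}\Sigma=AU_{r}^{*}$, the $(i,j)$ entry reads $D_{ii}^{-2}(U_{l})_{ij}\Sigma_{jj}=(AU_{r}^{*})_{ij}$, and since any entry of a matrix is bounded by its spectral norm, $|(AU_{r}^{*})_{ij}|\le\|AU_{r}^{*}\|_{2}=\|A\|_{2}$, which gives $|(U_{l})_{ij}|\le\|A\|_{2}\,D_{ii}^{2}/\Sigma_{jj}$. Reading $\Sigma^{-1}U_{l}^{*}D^{2}=U_{r}A^{-1}$ entrywise in the same way yields $|(U_{l})_{ij}|\le\|A^{-1}\|_{2}\,\Sigma_{jj}/D_{ii}^{2}$.

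The final step is to eliminate $\Sigma_{jj}$ in favor of $D_{jj}^{2}$. Here I would invoke the classical multiplicative interlacing $\sigma_{j}(XY)\le\sigma_{j}(X)\sigma_{1}(Y)$ and $\sigma_{j}(XY)\ge\sigma_{j}(X)\sigma_{n}(Y)$, applied with $X=D^{2}$ and $Y=A$, to get $D_{jj}^{2}/\|A^{-1}\|_{2}\le\Sigma_{jj}\le D_{jj}^{2}\|A\|_{2}$. Substituting the lower bound into the first entrywise estimate and the upper bound into the second produces $|(U_{l})_{ij}|\le\kappa(A)D_{ii}^{2}/D_{jj}^{2}$ and $|(U_{l})_{ij}|\le\kappa(A)D_{jj}^{2}/D_{ii}^{2}$ respectively (using that spectral condition number is bounded by the Frobenius one); the minimum is the stated bound. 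The only delicate point is the product inequality for singular values, but that is a textbook fact, so once the two SVD rearrangements are in hand the rest is mechanical.
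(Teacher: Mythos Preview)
Your argument for the two norm-wise bounds is exactly the paper's (commented-out) proof: rewrite $D^{2}A=U_{l}\Sigma U_{r}$ as $D^{-2}U_{l}\Sigma=AU_{r}^{*}$ and $\Sigma^{-1}U_{l}^{*}D^{2}=U_{r}A^{-1}$, then use unitary invariance of the Frobenius norm.

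For the component-wise bound the paper does not give its own argument but simply cites Mathias (1997), so your explicit derivation goes beyond what the paper writes. Your route---read off the $(i,j)$ entry of the same two identities to get $|(U_{l})_{ij}|\le\|A\|_{2}\,D_{ii}^{2}/\Sigma_{jj}$ and $|(U_{l})_{ij}|\le\|A^{-1}\|_{2}\,\Sigma_{jj}/D_{ii}^{2}$, and then trade $\Sigma_{jj}$ for $D_{jj}^{2}$ via the multiplicative Weyl inequalities $\sigma_{j}(D^{2})/\|A^{-1}\|_{2}\le\sigma_{j}(D^{2}A)\le\sigma_{j}(D^{2})\|A\|_{2}$---is clean and correct. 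One caveat you should make explicit: identifying $\sigma_{j}(D^{2})$ with $D_{jj}^{2}$ requires that the diagonal of $D$ is arranged in decreasing order (matching the usual decreasing ordering of $\Sigma$). This is the standing convention throughout the paper, but the lemma as stated does not say it, so flag it in the proof.
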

~~~
\begin{lem}
\label{lem:factorization lemma, perturb}Suppose that $B$ is complex
symmetric, and has the LU factorization $B=L_{B}L_{B}^{\text{T}}$.
Define\textup{ \[
\mu_{1}\left(L_{B}\right)=\kappa\left(L_{B}\right)\max\left\{ \left\Vert L_{B}^{-1}\right\Vert ^{2},\left\Vert L_{B}\right\Vert ^{2}\right\} .\]
}Then the right singular vectors of $DBD$ and $D\left(B+\delta B\right)D$
coincide with the right singular vectors of $D^{2}A$ and $D^{2}\left(A+\delta A\right)$,
where the matrices $A$ and $\delta A$ satisfy \[
\left\Vert A\right\Vert \leq\left\Vert L_{B}\right\Vert ^{2},\,\,\,\,\left\Vert A^{-1}\right\Vert \leq\left\Vert L_{B}^{-1}\right\Vert ^{2},\,\,\,\,\left\Vert \delta A\right\Vert \leq\kappa^{3}\left(L_{B}\right)\mathcal{O}\left(\left\Vert \delta B\right\Vert \right).\]
\end{lem}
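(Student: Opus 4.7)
The plan is to construct $A$ and $\delta A$ explicitly using the LU factorization $B = L_B L_B^T$ together with a QR factorization, following the recipe sketched in the overview, and then prove the three bounds by combining the scaling results already established for triangular matrices.

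First, I would write $DBD = XD^2Y$ with $X = DL_B D^{-1}$ and $Y = D^{-1} L_B^T D$; by Lemma~\ref{lem:graded triangular matrices} both factors are well-conditioned, with $\max\{\|X\|,\|X^{-1}\|,\|Y\|,\|Y^{-1}\|\} \le \max\{\|L_B\|,\|L_B^{-1}\|\}$. Taking a QR factorization $X = QR$ and setting $A = (D^{-2}RD^2)Y$ gives $QD^2 A = (QR)(D^2 Y) = XD^2Y = DBD$; since $Q$ is unitary, $D^2A$ and $DBD$ share right singular vectors. Running the same construction on $B + \delta B = L_B'(L_B')^T$ (whose LU factorization exists for $\|\delta B\|$ small enough) produces $A + \delta A$ with the analogous property.

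For the norm bounds I would exploit the upper-triangular structure of $R$ and $R^{-1}$. An entry-wise computation analogous to the proof of Lemma~\ref{lem:graded triangular matrices}, but with $D^2$ in place of $D$, shows that $\|D^{-2} U D^2\| \le \|U\|$ whenever $U$ is upper triangular and $D>0$ has decreasing diagonal. Combined with $\|R\| = \|X\| \le \|L_B\|$ and $\|Y\| \le \|L_B\|$ this yields $\|A\| \le \|D^{-2}RD^2\|\,\|Y\| \le \|L_B\|^2$; the same argument applied to $A^{-1} = Y^{-1}(D^{-2}R^{-1}D^2)$ gives $\|A^{-1}\| \le \|L_B^{-1}\|^2$.

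The perturbation bound $\|\delta A\| \le \kappa^3(L_B)\mathcal{O}(\|\delta B\|)$ is the main work and splits into two standard linearizations. First, expanding $B + \delta B = (L_B + \delta L_B)(L_B + \delta L_B)^T$ to first order and multiplying on left and right by $L_B^{-1}$ and $L_B^{-T}$ reduces the LU perturbation to $L_B^{-1}(\delta B)L_B^{-T} = M + M^T$ for the lower-triangular matrix $M = L_B^{-1}\delta L_B$; reading off $M$ as (a scaling of) the lower triangular part and back-substituting gives $\|\delta L_B\| \le \|L_B^{-1}\|\kappa(L_B)\,\mathcal{O}(\|\delta B\|)$. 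Second, a standard QR perturbation bound applied to $\delta X = D(\delta L_B)D^{-1}$ yields $\|R' - R\| \le \kappa(X)\|\delta X\|$, and the perturbation $\delta Y = D^{-1}(\delta L_B)^T D$ satisfies $\|\delta Y\| \le \|\delta L_B\|$ by Lemma~\ref{lem:graded triangular matrices}. Writing
\[
\delta A = (D^{-2}(R'-R)D^2)\,Y + (D^{-2}RD^2)(\delta Y) + \mathcal{O}(\|\delta L_B\|^2)
\]
and estimating each term by the same triangular/grading arguments as in the norm step gives
\[
\|\delta A\| \le \kappa(L_B)\|L_B\|\,\|\delta L_B\|\cdot \mathcal{O}(1) = \kappa^3(L_B)\,\mathcal{O}(\|\delta B\|).
\]
The main obstacle is controlling the two nonlinear linearizations simultaneously while keeping track of the interaction with $D$: the LU step contributes $\kappa(L_B)\|L_B^{-1}\|$ from the Lyapunov-type solve and the QR step contributes another $\kappa(L_B)$, so that the product $\|L_B\|\cdot\|L_B^{-1}\|\cdot\kappa(L_B)^2 = \kappa^3(L_B)$ accounts for the stated exponent, provided no additional growth is hidden in the composition.
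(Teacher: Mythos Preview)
Your proposal is correct and follows essentially the same route as the paper's proof: both factor $DBD=(DL_BD^{-1})D^2(D^{-1}L_B^TD)$, take a QR factorization of the left factor, define $A=(D^{-2}RD^2)(D^{-1}L_B^TD)$, bound $\|A\|$ and $\|A^{-1}\|$ via Lemma~\ref{lem:graded triangular matrices}, and then obtain $\|\delta A\|$ by combining an LU perturbation bound for $L_B$ with a QR perturbation bound for $R$. The only cosmetic difference is that the paper cites Stewart for the LU and QR perturbation estimates whereas you sketch the LU step directly; the chain of constants producing $\kappa^3(L_B)$ is the same in both arguments.
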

\begin{proof}
Note that \begin{eqnarray*}
DBD & = & D\left(L_{B}L_{B}^{\text{T}}\right)D\\
 & = & \left(DL_{B}D^{-1}\right)D^{2}\left(D^{-1}L_{B}^{\text{T}}D\right)\\
 & = & =L_{1}D^{2}L_{1}^{\text{T}}.\end{eqnarray*}
Now let $L_{1}=QR$ denote the QR factorization of $L_{1}$, and define
$A=\left(D^{-2}RD^{2}\right)L_{1}^{\text{T}}$. Then it follows that
$QD^{2}A=L_{1}D^{2}L_{1}^{\text{T}}$, and the singular values of
$D^{2}A$ and $DBD$ coincide. Also, using Lemma~\ref{lem:graded triangular matrices}
we calculate that \[
\left\Vert A\right\Vert \leq\left\Vert L_{B}\right\Vert ^{2},\,\,\,\,\left\Vert A^{-1}\right\Vert \leq\left\Vert L_{B}^{-1}\right\Vert ^{2}.\]

We now consider $D\left(B+\delta B\right)D$. First note that $B+\delta B$
has the LU factorization $B+\delta B=\left(L_{B}+\delta L_{B}\right)\left(L_{B}^{\text{T}}+\delta U_{B}\right)$,
where (see \cite{STEWAR:1993}) \begin{equation}
\max\left\{ \left\Vert \delta L_{B}\right\Vert ,\left\Vert \delta U_{B}\right\Vert \right\} \leq\left\Vert L_{B}^{-1}\right\Vert \kappa\left(L_{B}\right)\left\Vert \delta B\right\Vert .\label{eq:}\end{equation}
Therefore, \[
D\left(B+\delta B\right)D=\left(L_{1}+\delta L_{1}\right)D^{2}\left(L_{1}^{\text{T}}+\delta U_{1}\right),\]
where \begin{equation}
\left\Vert \delta L_{1}\right\Vert =\left\Vert D\delta L_{B}D^{-1}\right\Vert \leq\left\Vert \delta L_{B}\right\Vert \leq\left\Vert L_{B}^{-1}\right\Vert \kappa\left(L_{B}\right)\left\Vert \delta B\right\Vert .\label{eq:deltaL_{1}}\end{equation}
In the first inequality above, we used Lemma~\ref{lem:graded triangular matrices}.
Similarly, \begin{equation}
\left\Vert \delta U_{1}\right\Vert \leq\left\Vert L_{B}^{-1}\right\Vert \kappa\left(L_{B}\right)\left\Vert \delta B\right\Vert .\label{eq:deltaU_{1}}\end{equation}
Now let $L_{1}+\delta L_{1}$ have the QR factorization $L_{1}+\delta L_{1}=\widetilde{Q}\left(R+\delta R\right)$.
Then from \cite{STEWAR:1993},\[
\left\Vert \delta R\right\Vert \leq\kappa\left(L_{1}\right)\left\Vert \delta L_{1}\right\Vert \leq\left\Vert L_{B}^{-1}\right\Vert \kappa^{2}\left(L_{B}\right)\left\Vert \delta B\right\Vert .\]
Therefore, if we define \begin{eqnarray*}
A+\delta A & = & \left(D^{-2}\left(R+\delta R\right)D^{2}\right)\left(L_{1}^{\text{T}}+\delta U_{1}\right)\\
 & = & A+\left(D^{-2}\delta RD^{2}\right)L_{1}^{\text{T}}+\left(D^{-2}RD^{2}\right)\delta U_{1},\end{eqnarray*}
we may use (\ref{eq:deltaL_{1}}) and (\ref{eq:deltaU_{1}}) to bound\begin{eqnarray*}
\left\Vert \delta A\right\Vert  & \leq & \left\Vert L_{B}^{-1}\right\Vert \kappa^{2}\left(L_{B}\right)\left\Vert L_{B}\right\Vert \left\Vert \delta B\right\Vert +\left\Vert L_{B}\right\Vert \left\Vert L_{B}^{-1}\right\Vert \kappa\left(L_{B}\right)\left\Vert \delta B\right\Vert \\
 & = & \kappa^{3}\left(L_{B}\right)\mathcal{O}\left(\left\Vert \delta B\right\Vert \right).\end{eqnarray*}

\end{proof}
~~~~
\begin{lem}
\label{lem:bound on eig vec components}Suppose that $B$ is complex
symmetric, and has the LU factorization $B=L_{B}L_{B}^{\text{T}}$.
Define\textup{ \[
\mu_{1}\left(L_{B}\right)=\kappa\left(L_{B}\right)\max\left\{ \left\Vert L_{B}^{-1}\right\Vert ^{2},\left\Vert L_{B}\right\Vert ^{2}\right\} .\]
}Then the matrices $U$ and $V$ of left and right singular vectors
of $DBD$ satisfy\textup{ \begin{equation}
\max\left\{ \left\Vert D^{-1}U\Sigma^{1/2}\right\Vert ,\left\Vert DU\Sigma^{-1/2}\right\Vert \right\} \leq\mu_{1}\left(L_{B}\right),\label{eq:graded for left sing vec}\end{equation}
and \begin{equation}
\max\left\{ \left\Vert D^{-1}V\Sigma^{1/2}\right\Vert ,\left\Vert DV\Sigma^{-1/2}\right\Vert \right\} \leq\mu_{1}\left(L_{B}\right).\label{eq:graded for right sing vec}\end{equation}
Also, \[
\left|\left(U_{l}\right)_{ij}\right|\leq\kappa\left(A\right)\min\left\{ \frac{D_{ii}}{D_{jj}},\frac{D_{jj}}{D_{ii}}\right\} .\]
}\end{lem}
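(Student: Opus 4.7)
The plan is to reduce the problem, via the factorization of Lemma~\ref{lem:factorization lemma, perturb}, to grading bounds on the singular vectors of $D^{2}A$ supplied by Lemma~\ref{lem:grading of sing vecs, D^2A}, and then transfer those bounds through the unitary $Q$ that relates $DBD$ to $D^{2}A$. Writing $L_{1}=DL_{B}D^{-1}$ with QR decomposition $L_{1}=QR$, Lemma~\ref{lem:factorization lemma, perturb} yields $DBD=QD^{2}A$ with $A=(D^{-2}RD^{2})L_{1}^{\text{T}}$ satisfying $\left\Vert A\right\Vert \leq\left\Vert L_{B}\right\Vert ^{2}$ and $\left\Vert A^{-1}\right\Vert \leq\left\Vert L_{B}^{-1}\right\Vert ^{2}$. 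If $D^{2}A=U_{l}\Sigma\widetilde{V}^{*}$ is the SVD, then unitarity of $Q$ makes $DBD=(QU_{l})\Sigma\widetilde{V}^{*}$ an SVD as well, so $U=QU_{l}$ and $V=\widetilde{V}$. Moreover $DBD$ is complex symmetric (because $B=L_{B}L_{B}^{\text{T}}$ is), so its Takagi factorization $DBD=U\Sigma U^{\text{T}}$ forces $V=\bar{U}$; hence any grading bound on $U$ transfers verbatim to $V$, and it suffices to establish (\ref{eq:graded for left sing vec}).

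Next I would derive the desired intermediate-scaled bound on $U_{l}$. Lemma~\ref{lem:grading of sing vecs, D^2A} provides the two Frobenius bounds $\left\Vert D^{-2}U_{l}\Sigma\right\Vert \leq\left\Vert A\right\Vert $ and $\left\Vert D^{2}U_{l}\Sigma^{-1}\right\Vert \leq\left\Vert A^{-1}\right\Vert $. Reading them entrywise gives $\left|(U_{l})_{kj}\right|\leq\min\left\{ \left\Vert A\right\Vert D_{kk}^{2}\Sigma_{jj}^{-1},\,\left\Vert A^{-1}\right\Vert \Sigma_{jj}D_{kk}^{-2}\right\}$. A case split on whether $D_{kk}^{2}\leq\Sigma_{jj}$ or $D_{kk}^{2}\geq\Sigma_{jj}$ converts this into $\left|(U_{l})_{kj}\right|\leq\max\left\{ \left\Vert A\right\Vert ,\left\Vert A^{-1}\right\Vert \right\} \cdot\min\left\{ D_{kk}\Sigma_{jj}^{-1/2},\,\Sigma_{jj}^{1/2}D_{kk}^{-1}\right\}$, because whichever side of the outer minimum is active contributes an extra factor that is $\leq 1$ in its active regime. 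Summing squares then yields $\max\left\{ \left\Vert D^{-1}U_{l}\Sigma^{1/2}\right\Vert ,\left\Vert DU_{l}\Sigma^{-1/2}\right\Vert \right\} \leq\max\left\{ \left\Vert L_{B}\right\Vert ^{2},\left\Vert L_{B}^{-1}\right\Vert ^{2}\right\} \cdot\mathcal{O}(1)$.

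Finally I would fold in the $Q$ factor. Applied to $L_{1}$, Lemma~\ref{lem:bound on Q components-1} gives $\max\left\{ \left\Vert D^{-1}QD\right\Vert ,\left\Vert DQD^{-1}\right\Vert \right\} \leq\kappa(L_{B})$, and Lemma~\ref{lem:mult of graded matrices-1} with $c_{Q}=\kappa(L_{B})$ and $c_{U}=\max\{\left\Vert L_{B}\right\Vert ^{2},\left\Vert L_{B}^{-1}\right\Vert ^{2}\}\cdot\mathcal{O}(1)$ then produces $\max\left\{ \left\Vert D^{-1}U\Sigma^{1/2}\right\Vert ,\left\Vert DU\Sigma^{-1/2}\right\Vert \right\} \leq\mu_{1}(L_{B})\cdot\mathcal{O}(1)$, which is (\ref{eq:graded for left sing vec}); then (\ref{eq:graded for right sing vec}) follows at once from $V=\bar{U}$. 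The concluding pointwise bound $\left|(U_{l})_{ij}\right|\leq\kappa(A)\min\left\{ D_{ii}/D_{jj},D_{jj}/D_{ii}\right\}$ is a direct weakening of the entrywise estimate already in Lemma~\ref{lem:grading of sing vecs, D^2A}, since $\min\{D_{ii}^{2}/D_{jj}^{2},D_{jj}^{2}/D_{ii}^{2}\}\leq\min\{D_{ii}/D_{jj},D_{jj}/D_{ii}\}$.

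The main obstacle is the case-analysis that upgrades the pair of $D^{\pm2}$-scaled estimates of Lemma~\ref{lem:grading of sing vecs, D^2A} into a single $D^{\pm1}$-scaled bound: neither of those norm inequalities is individually sharp in the regime where $D_{kk}^{2}$ and $\Sigma_{jj}$ differ substantially, so one has to apply them pointwise and exploit that at each $(k,j)$ at least one factor in the intermediate minimum is automatically $\leq 1$. The dimension-dependent constants arising when passing between pointwise and Frobenius estimates, as well as the fact that Lemma~\ref{lem:mult of graded matrices-1} is stated in operator-style rather than strict Frobenius submultiplicativity, are absorbed into the $\mathcal{O}(1)$ implicit in the statement.
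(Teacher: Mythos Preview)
Your proposal is correct and follows essentially the same route as the paper: factor $DBD=QD^{2}A$ via Lemma~\ref{lem:factorization lemma, perturb}, invoke Lemma~\ref{lem:grading of sing vecs, D^2A} for entrywise bounds on $U_{l}$, upgrade the $D^{\pm2}$ scaling to $D^{\pm1}$ via the case split, then pass through $Q$ using Lemmas~\ref{lem:bound on Q components-1} and~\ref{lem:mult of graded matrices-1}, with $V=\overline{U}$ handling the right singular vectors. You are in fact slightly more careful than the paper in flagging the dimension-dependent $\mathcal{O}(1)$ that appears when converting the pointwise bound back to a Frobenius bound; the paper writes the resulting inequality as an exact bound but, as you note, an implicit constant depending only on $n$ is absorbed (consistent with the convention stated at the start of Section~\ref{sec:Accuracy-and-perturbation}).
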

\begin{proof}
%
{}Since $B$ is complex symmetric, $B$ has the SVD $B=U\Sigma V^{*}$,
$V=\overline{U}$. Thus, it suffices to consider the matrix $U$ of
left singular vectors.

As in the proof of Lemma~\ref{lem:factorization lemma, perturb},
we may factor $DBD=QD^{2}A$, where $Q$ is orthogonal and \[
\left\Vert A\right\Vert \leq\left\Vert L_{B}\right\Vert ^{2},\,\,\,\,\left\Vert A^{-1}\right\Vert \leq\left\Vert L_{B}^{-1}\right\Vert ^{2}.\]
Now suppose that $D^{2}A$ has the SVD $D^{2}A=U_{l}\Sigma V_{r}^{*}$.
From Lemma~\ref{lem:grading of sing vecs, D^2A}, the matrix $U_{l}$
of left singular vectors satisfies\[
\left\Vert D^{-2}U_{l}\Sigma\right\Vert \leq\left\Vert A\right\Vert ,\,\,\,\left\Vert \left(D^{2}U_{l}\Sigma^{-1}\right)\right\Vert \leq\left\Vert A^{-1}\right\Vert .\]
In particular, \[
\frac{\Sigma_{jj}}{D_{ii}^{2}}\left|\left(U_{l}\right)_{ij}\right|\leq\left\Vert A\right\Vert ,\,\,\,\frac{D_{ii}^{2}}{\Sigma_{jj}}\left|\left(U_{l}\right)_{ij}\right|\leq\left\Vert A^{-1}\right\Vert ,\]
and so \begin{eqnarray*}
\left|\left(U_{l}\right)_{ij}\right| & \leq & \max\left\{ \left\Vert A^{-1}\right\Vert ,\left\Vert A\right\Vert \right\} \min\left\{ \frac{D_{ii}^{2}}{\Sigma_{jj}},\frac{\Sigma_{jj}}{D_{ii}^{2}}\right\} ,\\
 & \leq & \max\left\{ \left\Vert L_{B}^{-1}\right\Vert ^{2},\left\Vert L_{B}\right\Vert ^{2}\right\} \min\left\{ \frac{D_{ii}}{\sqrt{\Sigma_{jj}}},\frac{\sqrt{\Sigma_{jj}}}{D_{ii}}\right\} .\end{eqnarray*}
We can write the inequality above as\[
\max\left\{ \left\Vert D^{-1}U_{l}\Sigma^{1/2}\right\Vert ,\left\Vert DU_{l}\Sigma^{-1/2}\right\Vert \right\} \leq\max\left\{ \left\Vert L_{B}^{-1}\right\Vert ^{2},\left\Vert L_{B}\right\Vert ^{2}\right\} .\]
Also, from Lemma~\ref{lem:bound on Q components-1}, the unitary
matrix $Q$ from the QR factorization $L_{1}=QR$ satisfies \begin{equation}
\max\left\{ \left\Vert D^{-1}QD\right\Vert ,\left\Vert DQD^{-1}\right\Vert \right\} \leq\kappa\left(L_{B}\right)\label{eq:bound on Q, graded}\end{equation}
Since $DBD=QD^{2}A$, Lemma~\ref{lem:mult of graded matrices-1}
shows that the left singular vectors $U=QU_{l}$ of $DBD$ satisfy
\begin{equation}
\max\left\{ \left\Vert D^{-1}U\Sigma^{1/2}\right\Vert ,\left\Vert DU\Sigma^{-1/2}\right\Vert \right\} \leq\mu_{1}\left(L_{B}\right),\label{eq:left singular vectors of DBD}\end{equation}
where $\mu_{1}\left(L_{B}\right)$ is defined in the statement of
the lemma. \end{proof}
\begin{lem}
\label{lem:main bounds on eig components}Let $B$ be complex symmetric,
with an LU factorization $B=L_{B}L_{B}^{\text{T}}$. Let $D>0$ be
a positive-definite diagonal matrix, and define \begin{eqnarray*}
D_{1}B_{1}D_{1} & = & \left(\begin{array}{cc}
D & 0\\
0 & D\end{array}\right)\left(\begin{array}{cc}
0 & B\\
B^{*} & 0\end{array}\right)\left(\begin{array}{cc}
D & 0\\
0 & D\end{array}\right).\end{eqnarray*}
Then the $i$th eigenvector of $D_{1}B_{1}D_{1}$, corresponding to
eigenvalue $\Sigma_{ii}$, may be chosen so that the following component-wise
bounds hold: \textup{\begin{equation}
\left|x_{i}\left(j\right)\right|\leq\sqrt{2}\mu_{1}\left(L_{B}\right)\min\left\{ \frac{D_{jj}}{\sqrt{\Sigma_{ii}}},\frac{\sqrt{\Sigma_{ii}}}{D_{jj}}\right\} .\label{eq:main bound on x_i(j)}\end{equation}
The following norm-wise bound also holds: \[
\left\Vert D_{1}x_{i}\Sigma_{ii}^{-1/2}\right\Vert \geq\left\Vert B\right\Vert ^{-1/2}.\]
}\end{lem}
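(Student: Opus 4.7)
The argument proceeds by exploiting the block anti-diagonal Hermitian structure of $D_1B_1D_1$ to reduce the claim to bounds on the singular vectors of $DBD$ that were already obtained in Lemma~\ref{lem:bound on eig vec components} and in the proof of Proposition~\ref{pro:perturb for sing vecs of DBD}. Writing out the block multiplication gives
$$D_1B_1D_1=\begin{pmatrix} 0 & DBD \\ (DBD)^{*} & 0 \end{pmatrix},$$
so if $DBD$ has the SVD $DBD=U\Sigma V^{*}$, a direct calculation shows that the (real) eigenvalues of $D_1B_1D_1$ are $\pm\Sigma_{ii}$ with corresponding unit eigenvectors
$$x_i^{\pm}=\frac{1}{\sqrt{2}}\begin{pmatrix} u_i \\ \pm v_i\end{pmatrix}.$$

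For the component-wise bound, I would note that an index $j\in\{1,\dots,2n\}$ of $x_i$ corresponds to a diagonal entry of $D_1$ that equals a diagonal entry of $D$, and that $x_i(j)$ equals either $u_i(j)/\sqrt{2}$ or $v_i(j-n)/\sqrt{2}$. Lemma~\ref{lem:bound on eig vec components} supplies the estimates $|u_i(k)|,\,|v_i(k)|\le \mu_1(L_B)\min\{D_{kk}/\sqrt{\Sigma_{ii}},\sqrt{\Sigma_{ii}}/D_{kk}\}$, obtained by reading off entries of $DU\Sigma^{-1/2}$, $D^{-1}U\Sigma^{1/2}$, and their $V$-counterparts. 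Combining these with the factor $1/\sqrt{2}$ immediately yields the claimed component-wise estimate (with room to spare, since the $1/\sqrt{2}$ is much smaller than the stated $\sqrt{2}$).

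For the norm-wise bound, I would use the block decomposition to write
$$\|D_1 x_i\Sigma_{ii}^{-1/2}\|^{2}=\tfrac12\bigl(\|Du_i\Sigma_{ii}^{-1/2}\|^{2}+\|Dv_i\Sigma_{ii}^{-1/2}\|^{2}\bigr),$$
and then replay the argument from the proof of Proposition~\ref{pro:perturb for sing vecs of DBD}. Since $DBD\overline{u_i}=\Sigma_{ii}u_i$, one has $\Sigma_{ii}=|(BD\overline{u_i},Du_i)|\le\|B\|\,\|Du_i\|^{2}$, hence $\|Du_i\Sigma_{ii}^{-1/2}\|\ge\|B\|^{-1/2}$; an identical computation using $DB^{*}D\overline{v_i}=\Sigma_{ii}v_i$ (valid because $B$ is complex symmetric, so $DBD$ admits a Takagi factorization and $V=\overline{U}$) gives the analogous lower bound for $v_i$. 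Summing the two and taking square roots yields $\|D_1 x_i\Sigma_{ii}^{-1/2}\|\ge\|B\|^{-1/2}$.

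No step looks like a serious obstacle: once the block structure is unpacked the lemma reduces to previously established component- and norm-wise estimates for the singular vectors of $DBD$. The only minor book-keeping is matching indices between the $2n$-dimensional eigenvector $x_i$ and the $n$-dimensional vectors $u_i$, $v_i$, and tracking the harmless $1/\sqrt{2}$ normalization factor, which is absorbed into the stated constant $\sqrt{2}\,\mu_1(L_B)$.
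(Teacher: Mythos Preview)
Your proposal is correct and follows essentially the same approach as the paper: you write out the block anti-diagonal form of $D_1B_1D_1$, identify its eigenvectors as $\tfrac{1}{\sqrt{2}}(u_i,\pm v_i)^{\mathrm T}$ built from the singular vectors of $DBD$, invoke Lemma~\ref{lem:bound on eig vec components} for the component-wise bound, and reuse the inequality $\Sigma_{ii}\le\|B\|\,\|Du_i\|^2$ from Proposition~\ref{pro:perturb for sing vecs of DBD} for the norm-wise bound. The only cosmetic difference is that the paper orders the blocks as $(\overline{u_i},u_i)=(v_i,u_i)$ and carries out the component estimate at the matrix level (bounding $\|D_1^{-1}U_1|\Sigma_1|^{1/2}\|$ via the Frobenius norm, which produces the $\sqrt{2}$), whereas you read off entries directly; your observation that $1/\sqrt{2}$ would actually suffice is correct.
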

\begin{proof}
Since $B$ is complex symmetric, $B$ has the SVD $B=U\Sigma V^{*}$,
$V=\overline{U}$. Therefore, $D_{1}B_{1}D_{1}$ has the eigenvalue
decomposition \begin{eqnarray*}
D_{1}B_{1}D_{1} & = & \frac{1}{\sqrt{2}}\left(\begin{array}{cc}
\overline{U} & \overline{U}\\
U & -U\end{array}\right)\left(\begin{array}{cc}
\Sigma & 0\\
0 & -\Sigma\end{array}\right)\frac{1}{\sqrt{2}}\left(\begin{array}{cc}
\overline{U} & \overline{U}\\
U & -U\end{array}\right)^{*}\\
 & = & U_{1}\Sigma_{1}U_{1}^{*}.\end{eqnarray*}
 From (\ref{eq:graded for left sing vec}) in Lemma~\ref{lem:bound on eig vec components},\begin{equation}
\max\left\{ \left\Vert D^{-1}U\Sigma^{1/2}\right\Vert ,\left\Vert DU\Sigma^{-1/2}\right\Vert \right\} \leq\kappa\left(L_{B}\right)\max\left\{ \left\Vert L_{B}^{-1}\right\Vert ^{2},\left\Vert L_{B}\right\Vert ^{2}\right\} .\label{eq:graded of U for main lemma bound}\end{equation}
Since \begin{eqnarray*}
D_{1}^{-1}U_{1}\left|\Sigma_{1}\right|^{1/2} & = & \frac{1}{\sqrt{2}}\left(\begin{array}{cc}
D & 0\\
0 & D\end{array}\right)\left(\begin{array}{cc}
\overline{U} & \overline{U}\\
U & -U\end{array}\right)\left(\begin{array}{cc}
\left|\Sigma\right|^{1/2} & 0\\
0 & \left|\Sigma\right|^{1/2}\end{array}\right)\\
 & = & \frac{1}{\sqrt{2}}\left(\begin{array}{cc}
D\overline{U}\left|\Sigma\right|^{1/2} & D\overline{U}\left|\Sigma\right|^{1/2}\\
DU\left|\Sigma\right|^{1/2} & -DU\left|\Sigma\right|^{1/2}\end{array}\right),\end{eqnarray*}
we have from (\ref{eq:graded of U for main lemma bound}) that \[
\left\Vert D_{1}^{-1}U_{1}\left|\Sigma_{1}\right|^{1/2}\right\Vert \leq\sqrt{2}\kappa\left(L_{B}\right)\max\left\{ \left\Vert L_{B}^{-1}\right\Vert ^{2},\left\Vert L_{B}\right\Vert ^{2}\right\} .\]
An analogous calculation shows that \[
\left\Vert D_{1}U_{1}\left|\Sigma_{1}\right|^{-1/2}\right\Vert \leq\sqrt{2}\kappa\left(L_{B}\right)\max\left\{ \left\Vert L_{B}^{-1}\right\Vert ^{2},\left\Vert L_{B}\right\Vert ^{2}\right\} .\]
The previous two inequalities show that the $i$th eigenvector $x_{i}$
of $D_{1}B_{1}D_{1}$ (e.g., the $i$th column of $U_{1}$) satisfies
the component-wise bound \[
\left|x_{i}\left(j\right)\right|=\left|\left(U_{1}\right)_{ji}\right|\leq\sqrt{2}\mu_{1}\left(L_{B}\right)\min\left\{ \frac{D_{jj}}{\sqrt{\Sigma_{ii}}},\frac{\sqrt{\Sigma_{ii}}}{D_{jj}}\right\} .\]

We now consider $\left\Vert D_{1}x_{i}\right\Vert /\sqrt{\Sigma_{ii}}$.
Since $x_{i}=\left(1/\sqrt{2}\right)\left[\overline{u_{i}}u_{i}\right]^{\text{T}}$,
we have from the norm-wise bounds in Proposition~\ref{pro:perturb for sing vecs of DBD}
that \[
\left\Vert D_{1}x_{i}\Sigma_{ii}^{-1/2}\right\Vert =\frac{1}{\sqrt{2}}\left\Vert \left(\begin{array}{c}
D\overline{u_{i}}\Sigma_{ii}^{-1/2}\\
Du_{i}\Sigma_{ii}^{-1/2}\end{array}\right)\right\Vert =\left\Vert D_{1}u_{i}\Sigma_{ii}^{-1/2}\right\Vert \geq\left\Vert B\right\Vert ^{-1/2}.\]

\end{proof}

\subsection{Proof of the main result}
\begin{proof}
From Lemma~\ref{lem:factorization lemma, perturb}, the right singular
vectors of $DBD$ and $D\left(B+\delta B\right)D$ agree with the
right singular vectors of $D^{2}A$ and $D^{2}\left(A+\delta A\right)$,
where the matrices $A$ and $E$ satisfy \begin{equation}
\left\Vert A\right\Vert \leq\left\Vert L_{B}\right\Vert ^{2},\,\,\,\,\left\Vert A^{-1}\right\Vert \leq\left\Vert L_{B}^{-1}\right\Vert ^{2},\,\,\,\left\Vert \delta A\right\Vert \leq\kappa^{3}\left(L_{B}\right)\mathcal{O}\left(\left\Vert \delta B\right\Vert \right).\label{eq:bounds in A,A^{-1}, main prop}\end{equation}
Also, Lemmas~\ref{lem:grading of sing vecs, D^2A}~and~,\ref{lem:bound on eig vec components}
ensure that the $i$th left and right singular vectors $u_{i}$ and
$v_{i}$ of $D^{2}A$ are bounded by \begin{equation}
\max\left\{ \left|u_{i}\left(j\right)\right|,\left|v_{i}\left(j\right)\right|\right\} \leq\mu_{1}\left(L_{B}\right)\min\left\{ \frac{D_{jj}}{\sqrt{\Sigma_{ii}}},\frac{\sqrt{\Sigma_{ii}}}{D_{jj}}\right\} .\label{eq:bound on x_i(j), main prop}\end{equation}

To prove the component-wise bounds, we now proceed as in Theorem $2.21$
of \cite{DEM-VES:1992} (except now we may use the above component-wise
bounds for both $v_{i}\left(j\right)$ and $u_{i}\left(j\right)$).
Namely, let $\delta A=\eta E$, where $\eta=\left\Vert \delta A\right\Vert $,
and define \[
D_{1}=\left(\begin{array}{cc}
D^{2} & 0\\
0 & I\end{array}\right),\,\,\, E_{1}=\left(\begin{array}{cc}
0 & E^{*}\\
E & 0\end{array}\right),\,\,\, A_{1}=\left(\begin{array}{cc}
0 & A^{*}\\
A & 0\end{array}\right),\,\,\, x_{i}^{\pm}=\frac{1}{\sqrt{2}}\left(\begin{array}{c}
v_{i}\\
\pm u_{i}\end{array}\right).\]
Let $x_{i}^{\pm}\left(\eta\right)$ denote the eigenvectors of $G\left(\eta\right)$,
\begin{eqnarray*}
G\left(\eta\right) & = & \left(\begin{array}{cc}
0 & \left(A+\eta E\right)^{*}D^{2}\\
D^{2}\left(A+\eta E\right) & 0\end{array}\right)=D_{1}\left(A_{1}+\eta E_{1}\right)D_{1}.\end{eqnarray*}
%
{} Then by standard perturbation theory,\[
x_{i}^{\pm}\left(\eta\right)=x_{i}+\eta\sum_{\pm k\neq\pm i}\frac{\left(x_{i}^{\pm}\right)^{*}D_{1}E_{1}D_{1}x_{k}^{\pm}}{\pm\sigma_{i}\mp\sigma_{k}}x_{k}^{\pm}+\mathcal{O}\left(\eta^{2}\right),\]
and so \[
\left|x_{i}^{\pm}\left(\eta\right)\left(j\right)-x_{i}\left(j\right)\right|\leq\mathcal{O}\left(\eta\right)\sum_{\pm k\neq\pm i}\frac{\left|\left(x_{i}^{\pm}\right)^{*}D_{1}E_{1}D_{1}x_{k}^{\pm}\right|}{\pm\sigma_{i}\mp\sigma_{k}}\left|x_{k}^{\pm}\left(j\right)\right|.\]
%
{}Now,\begin{eqnarray*}
\left|\left(x_{i}^{\pm}\right)^{*}D_{1}E_{1}D_{1}x_{k}^{\pm}\right| & \leq & \frac{1}{2}\left|\left(v_{i},D^{2}E^{*}u_{k}\right)\right|+\frac{1}{2}\left|\left(u_{i},ED^{2}v_{k}\right)\right|\\
 & = & \frac{1}{2}\left|\left(D^{2}v_{i},E^{*}u_{k}\right)\right|+\frac{1}{2}\left|\left(u_{i},ED^{2}v_{k}\right)\right|\\
 & \leq & \frac{\left\Vert D^{2}v_{i}\right\Vert +\left\Vert D^{2}v_{k}\right\Vert }{2}.\end{eqnarray*}
Now, %
{} we have that $D^{2}Av_{k}=\sigma_{k}u_{k}$, and so\[
\sigma_{k}=\left\Vert D^{2}Av_{k}\right\Vert \geq\left\Vert A^{-1}\right\Vert ^{-1}\left\Vert D^{2}v_{k}\right\Vert ,\,\,\,\left\Vert D^{2}v_{k}\right\Vert \leq\sigma_{k}\left\Vert A^{-1}\right\Vert .\]
Similarly, $\left\Vert D^{2}v_{i}\right\Vert \leq\sigma_{i}\left\Vert A^{-1}\right\Vert $.
Therefore, \begin{equation}
\left|\left(x_{i}^{\pm}\right)^{*}D_{1}E_{1}D_{1}x_{k}^{\pm}\right|\leq\frac{\left\Vert D^{2}v_{i}\right\Vert +\left\Vert D^{2}v_{k}\right\Vert }{2}\leq\left\Vert A^{-1}\right\Vert \frac{\sigma_{i}+\sigma_{k}}{2}.\label{eq:bound on inner prod, main prop}\end{equation}
Finally, from (\ref{eq:bound on x_i(j), main prop}) and (\ref{eq:bounds in A,A^{-1}, main prop}),
\begin{eqnarray*}
\left|x_{i}^{\pm}\left(\eta\right)\left(j\right)-x_{i}\left(j\right)\right| & \leq & \mathcal{O}\left(\eta\right)\sum_{\pm k\neq\pm i}\frac{\left|\left(x_{i}^{\pm}\right)^{*}D_{1}E_{1}D_{1}x_{k}^{\pm}\right|}{\pm\sigma_{i}\mp\sigma_{k}}\left|x_{k}^{\pm}\left(j\right)\right|\\
 & \leq & \mathcal{O}\left(\eta\right)\left\Vert A^{-1}\right\Vert \mu_{1}\left(L_{B}\right)\sum_{\pm k\neq\pm i}\frac{\sigma_{i}+\sigma_{k}}{\left|\sigma_{i}-\sigma_{k}\right|}\min\left\{ \frac{D_{jj}}{\sqrt{\Sigma_{ii}}},\frac{\sqrt{\Sigma_{ii}}}{D_{jj}}\right\} \\
 & \leq & \mathcal{O}\left(\eta\right)\left\Vert L_{B}^{-1}\right\Vert ^{2}\mu_{1}\left(L_{B}\right)\left(\max_{k\neq i}\frac{\sigma_{i}+\sigma_{k}}{\left|\sigma_{i}-\sigma_{k}\right|}\right)\min\left\{ \frac{D_{jj}}{\sqrt{\Sigma_{ii}}},\frac{\sqrt{\Sigma_{ii}}}{D_{jj}}\right\} \\
 & \leq & \mathcal{O}\left(\left\Vert \delta B\right\Vert \right)\left\Vert L_{B}^{-1}\right\Vert ^{2}\mu_{1}\left(L_{B}\right)\kappa^{3}\left(L_{B}\right)\left(\max_{k\neq i}\frac{\sigma_{i}+\sigma_{k}}{\left|\sigma_{i}-\sigma_{k}\right|}\right)\min\left\{ \frac{D_{jj}}{\sqrt{\Sigma_{ii}}},\frac{\sqrt{\Sigma_{ii}}}{D_{jj}}\right\} ,\end{eqnarray*}
where we used (\ref{eq:bound on inner prod, main prop}) in the second
to last inequality and (\ref{eq:bounds in A,A^{-1}, main prop}) in
the last inequality. 

We now prove the norm-wise bound on $Du_{i}/\Sigma_{ii}$. To do so,
note that, since $B$ is complex symmetric, the SVD of $B$ may be
written as $B=U\Sigma U^{\text{T}}$ (e.g., $B$ has a Takagi factorization).
Now suppose that $DBD\overline{u_{i}}=\sigma_{i}u_{i}$. Then the
bound follows from \[
\sigma_{i}=\left|\left(DBD\overline{u_{i}},u_{i}\right)\right|=\left|\left(BD\overline{u_{i}},Du_{i}\right)\right|\leq\left\Vert B\right\Vert \left\Vert Du_{i}\right\Vert ^{2}.\]

Finally, the bounds on the ratio $\widetilde{\Sigma_{ii}}/\Sigma_{ii}$
of the singular values of $DBD$ and $D\left(B+\delta B\right)D$
follow from Theorem~$4.1$ of \cite{D-G-E-S-V-D:1997}. 
\end{proof}
~~~~

\bibliographystyle{plain}

\end{document}